\newtheorem{teo}{Theorem}[section]
\newtheorem{prop}[teo]{Proposition}
\newtheorem{lema}[teo]{Lemma}
\newtheorem{obs}[teo]{Remark}
\newtheorem{defnc}[teo]{Definition}
\newtheorem{coro}[teo]{Corollary}
\newcommand{\C}{{\mathbb C}}
\newcommand{\N}{{\mathbb N}}
\newcommand{\R}{{\mathbb R}}
\newcommand{\Z}{{\mathbb Z}}
\newcommand{\dif}{{\rm Diff}^{\omega} ({\mathbb R}, 0)}
\newcommand{\calb}{{\mathcal B}}
\newcommand{\diffM}{{\rm Diff}^{\omega} (M)}
\newcommand{\Leb}{{\rm Leb}}
\newcommand{\leb}{{\rm leb}}
\begin{document}

\title[Ergodic theory of non-discrete actions]{On the ergodic theory of certain non-discrete
actions and topological orbit equivalences}
\author{Julio C. Rebelo}
\date{}

\begin{abstract}
Quasi-invariant measures for non-discrete groups of diffeomorphisms containing a Morse-Smale dynamics are studied. The assumption
concerning the presence of a Morse-Smale dynamics allows us to extend to higher dimensions a number of recently established
results for non-discrete groups acting on the circle. These results are also
applied to show that, for many groups as above, every continuous orbit equivalence must
coincide almost everywhere with a diffeomorphism of the corresponding manifold.
\end{abstract}

\dedicatory{To Professor Yu.S. Il'yashenko, on the occasion of his ${\rm 70}^{\rm th}$ birthday}

\maketitle

\section{Introduction}

Essentially this paper consists of results about the ergodic theory of certain {\it non-discrete}\,
group actions along with a number of related questions, some of them
sort of loosely stated. Motivations for this study stem from a few sources and this Introduction is devoted to
describing some of them by following a chronological order. Statements for the main result of this paper are deferred
to Section~2. Yet, to illustrate the problems we shall be dealing with, let us state an easy ``linear'' version of
these results.

\vspace{0.1cm}

\noindent {\bf Linear Theorem (corollary of Corollary~B and of Theorem~D in Section~2)}. {\sl Consider the standard projective action
of ${\rm PSL}\, (2, \C)$ on the sphere~$S^2$. Suppose that $G_1, \, G_2$ are countable dense subgroups of ${\rm PSL}\, (2, \C)$.
Then we have:
\begin{enumerate}
  \item Suppose that $\mu$ is a $d$-quasiconformal probability measure for $G_1$. Then $\mu$ is absolutely continuous
  and, in particular, $d=2$.

  \item Suppose that $h$ is a topological orbit equivalence between the actions of $G_1$ and $G_2$. Then $h$
  is equivariant and it coincides with an element of ${\rm PSL}\, (2, \C)$.
\end{enumerate}
}

\vspace{0.1cm}

\noindent The conformal character of the projective action of ${\rm PSL}\, (2, \C)$ on $S^2$ is not particularly
important for the above theorem. In fact, similar
statements hold not only to isometry groups of higher dimensional hyperbolic spaces but also to the
projective action of ${\rm PSL}\, (n,\C)$ whose ergodic theory is less developed.

To abridge the discussion in this Introduction, let
${\rm Diff}\, (M)$ denote the group of diffeomorphisms of a compact
manifold $M$ without specifying the regularity: the reader may consider these diffeomorphisms to be real-analytic
though this is not always necessary.
The dynamical study of actions associated to non-discrete representations of (finitely generated)
groups has become more of a focus of interest in the past
two decades. In the so-called ``linear case'', i.e. when the action is simply the restriction of a Lie group
action to a countable subgroup $G$,
it was soon realized that the ``closure'' $\overline{G}$ of $G$ (inside the mentioned Lie group)
contains a wealthy of information on the dynamics of $G$ itself. Whereas the study of
``non-linear dynamics'' is much harder, in certain
cases it is possible to define a dynamically meaningful closure for
a countable non-discrete group $G$ acting on some space. To the best of my knowledge, the first time that these ideas were
effectively put to work was in the papers by Shcherbakov
\cite{arsene1}, \cite{arsene2} naturally
inspired by the seminal work of Y. Il'yashenko \cite{foliations1} on dynamics of complex polynomial vector fields. In his papers,
Shcherbakov deals with the dynamics of non-solvable subgroups of ${\rm Diff}\, (\C,0)$.
Independently and slightly later than Shcherbakov,
the same idea was used by Nakai \cite{nakai} who elegantly established accurate versions of Shcherbakov results.
This collection of results, that deserves to be called
Shcherbakov-Nakai theory, clarifies
the topological dynamics and the basic ergodic theory of the corresponding group actions
(or rather pseudogroup actions). This theory however falls short of settling some questions belonging to the ``higher
ergodic theory'' of these actions, as it will be seen below. Subsequently to their work, Ghys was the first to explicitly
talk about ``non-linear and non-discrete'' group actions
\cite{ghys}. In particular, it is clear from his work that ``non-solvable subgroups of ${\rm Diff}\, (\C,0)$''
are necessarily non-discrete and this provided a kind of explanation for the existence of
Shcherbakov-Nakai theory. It is also shown in \cite{ghys}
that non-discrete finitely generated subgroups of ${\rm Diff}\, (M)$
are quite common and the general idea of defining dynamical closures for the corresponding actions is implicitly advanced.
Basically, it amounts to
constructing non-trivial vector fields on $M$ whose (local) flows can be approximated by actual elements in $G$.
When these vector fields exist, they are said
to belong to the closure of $G$ (cf. Section~2 for an accurate definition).

For a manifold not reduced to a single point, the program proposed by Ghys was satisfactory carried
out on the circle by the author in \cite{rebelo-ENS}. Whereas
no general result is available for higher dimensional manifolds, the method of \cite{rebelo-ENS}
still works in the presence of some ``local contraction'', for example
on a neighborhood of a point $p \in M$ that happens to be a contracting hyperbolic fixed point for some element
$f \in G$. This type of idea was further developed
in \cite{frankandI} (see also \cite{belliart-1}) and used to extend Il'yashenko's theory \cite{foliations1} of
(singular) holomorphic foliations on projective spaces.
Around the same time, and again
independently, Belliart \cite{belliart-2} and the author \cite{rebelo-BBMS} noted that the theory of
\cite{rebelo-ENS} generalizes to non-discrete groups
$G \subset {\rm Diff}\, (M)$ containing a Morse-Smale diffeomorphism (satisfying some very generic
conditions). Though Morse-Smale dynamics does exist in a number of interesting cases,
the corresponding results appear to be much less known than their analogues for
${\rm Diff}\, (\C,0)$ and ${\rm Diff}\, (S^1)$. Hopefully the discussion in this
paper will help to raise further interest in the potential applications of these techniques.

Having briefly mentioned the general method of exploiting the ``closure of a non-discrete subgroup $G$
of ${\rm Diff}\, (M)$'' to study the dynamics of
$G$ itself, let us now turn our attention towards problems belonging to the ``higher ergodic theory'' of $G$.
Here, it may be noted that the usefulness of the above mentioned
vector fields lies primarily in the fact that they allow us
to prove {\it in the non-linear setting}, results that are easy in the linear one. For example, these vector fields
are effective tools to show denseness
of orbits and ergodicity with respect to the Lebesgue measure for the corresponding groups
(the corresponding results being very well-known in the ``linear''
setting). However, there are subtle questions about dynamics that are hard even in the ``linear'' case, and
these include the description of quasi-invariant measures and the
rigidity/flexibility of the orbit structure (in the sense of orbit equivalence, cf. below). These questions are
the prototype of what we have vaguely called the ``higher
ergodic theory'' of group actions. The difficulty of describing quasi-invariant
measures was discussed in \cite{rebelo-PLMS} and it is illustrated by the
following theorem due to Kaimanovich and Le Prince \cite{KPrince}: for every Zariski-dense
finitely generated subgroup $G$ of ${\rm PSL}\, (2, \R)$ there is a non-degenerate
probability measure $\nu$ on $G$ giving rise to a singular stationary measure $\mu$ for the
corresponding (projective) action on $S^1$. Yet, considering the closure of $G$ led us
to new insight in the structure of quasi-invariant measures for non-discrete (and non-solvable)
subgroups of ${\rm Diff}\, (S^1)$. Another motivation for this paper was then to show that most of
the results in \cite{rebelo-PLMS} can be generalized
to higher dimension in the presence of Morse-Smale dynamics.

Finally a last motivation for the discussion conducted in this paper has to do with ``trajectory theory'',
in the spirit of \cite{sinai}, for the corresponding group actions. Among the group actions constructed
in this paper (corresponding essentially to free group actions), we consider whether or not they
are topologically orbit equivalent (topologically OE) with respect to the quasi-invariant Lebesgue measure.
It is then shown that the existence of a topological OE implies that the actions are, indeed, conjugate in $\diffM$.
It may also be pointed out that continuous OE appears to be related to the uniqueness
of certain cross products over $C^{\ast}$-algebras, see \cite{cstaralgebras}. Note that, in the measurable
category, OE bears a close connection with von Neumann algebras and this connection has inspired
a vast literature (see the survey \cite{furman}). In the direction of rigidity results, we shall also sketch
a proof of the fact that an {\it equivariant OE}\, (i.e. a measurable conjugacy) between groups as before
must coincide a.e. with a diffeomorphism of $M$. A few questions about general OE will also be raised.

Let us close this Introduction with a brief outline of the structure of this work. The first thing to be
noted is that this structure is not fully linear: some points are developed to a level of generality greater
than what is needed for the use made in this paper. These situations happen when we want to single out
additional questions or when we believe the corresponding results are interesting in themselves
(in the latter case, see the material in Section~3). This said, Section~2 contains accurate statements
for our main results along with the relevant definitions. Sections~3 and~4 deserve further comments.
As mentioned, vector fields in the ``closure of groups'' constitute the main tool used in our discussion.
Whereas we are mainly concerned with analytic diffeomorphisms, these techniques are also relevant in the
differentiable case, though their extensions beyond the analytic context
hardly been considered (apart from \cite{rebelo-TAMS}). Therefore some was made to clarify the existence
of this type of vector fields in the $C^{\infty}$-case as well. In fact, dealing with groups of
$C^{\infty}$-diffeomorphisms presents us with brand new situations where the notion
of vector fields in the ``closure of groups'' can be brought to shed new light in the dynamics. A remarkable
example of it, being the conclusion that Ghys-Sergiescu \cite{gs} realization of Thompson group in ${\rm Diff}\, (S^1)$
is ``very discrete'' in the sense that it does not satisfy condition~(1) in Section~2. In view of it,
Section~3 contains various results ensuring the existence of vector fields in the ``closure'' of diffeomorphisms
groups (both in the smooth and analytic cases). Proofs are usually harder in the smooth case  and are given in full detail.
Section~4 begins with some extensions of the results in Section~3 for the analytic case. We shall not come back
to the smooth case since we consider that, after the discussion in Section~3, the analogous statements
for $C^{\infty}$-diffeomorphisms can safely be left to the reader. The section then continues with a crucial
result about quasi-invariant measures for groups contained the translations of $\R^n$. Then, after
a brief discussion on ``general'' and ``generic'' (pseudo) Lie algebras associated to groups, our main
results about quasi-invariant measures are derived. Finally Section~5 builds on the previous material
to discuss some rigidity phenomena and a number of related questions.

\vspace{0.2cm}

\noindent {\bf Acknowledgments}. I am very grateful to the editors for the opportunity to submit this
contribution to the celebrations around Y. Ilyashenko's 70 birthday. Special thanks are due to A. Glutsyuk for a few invitations
as well as for many interesting mathematical conversations that we have held in the past few years.

Thanks to D. Fisher for an interesting discussion on orbit-equivalences and, in particular, for having pointed out to
me his work \cite{david}. I am also grateful to C. Connel whose interest in higher dimensional versions of
\cite{rebelo-PLMS} motivated me to write this article.

This paper was written during a visit to the ``unit\'e internationale'' IMPA-CNRS. I would like to thank CNPq/Brazil
for its financial support.

\section{Main results}

Our central statements will involve groups of real analytic diffeomorphisms since essentially all natural examples discussed
in this paper belong to this category. However in Section~3 it will become clear that most of these statements
holds in the $C^{\infty}$-category as well.
Let then $M$ be a compact real analytic manifold and denote by ${\rm Diff}^{\omega} (M)$
the group of its real analytic diffeomorphisms. It may be convenient
to equip ${\rm Diff}^{\omega} (M)$ with the analytic topology of Takens so that
${\rm Diff}^{\omega} (M)$ inherits the Baire property. To define this topology, we first need to fix a {\it complexification}\,
of $M$. For this, choose a realization of $M$ as an (real analytic) sub-manifold of an Euclidean space $\R^s$. Setting
$\R^s \subset \C^s$, a {\it complexification}\, of $M$ is a complex sub-manifold $\widetilde{M} \subset \C^n$ containing $M$
(locally) as $R^s$ is contained in $\C^s$, see \cite{broer} for details. We then denote by $\widetilde{M}_{\tau}$ the set
of points in $\widetilde{M}$ whose distance to $M$ is less than $\tau >0$. The analytic topology is defined by stating
that a sequence of real analytic maps $f_i : M \rightarrow \R$
converges to a (necessarily analytic) map $f_{\infty}$ if, to every $\tau, \, \varepsilon >0$, there corresponds
$i_0$ such that $f_i-f$ has a holomorphic extension $\widetilde{f_i-f}$ to $\widetilde{M}_{\tau}$ satisfying
$\sup_{\widetilde{M}_{\tau}} \Vert \widetilde{f_i-f} \Vert \leq \varepsilon$ whenever $i >i_0$.
By virtue of Cauchy formula, it follows that convergence in this analytic topology implies $C^{\infty}$-convergence.

Next, let us explain Ghys's criterion for non-discrete groups \cite{ghys}. Given a
(finitely generated) group $G \subset \diffM$,
Ghys associates to every (finite) generating set $S$ of $G$ a
sequence of sets $S(j) \subseteq G$ defined as follows: $S(0) =S$ and $S(j+1)$ is the set whose elements are the
commutators written under the form $[F_1^{\pm 1} ,F_2^{\pm 1}]$ where $F_1 \in S(j)$ and
$F_2 \in S(j) \cup S(j-1)$ ($h \in S(0)$ if $j=0$). Then the group $G$ is said to be {\it pseudo-solvable}\,
if for some generating set $S$ as above, the sequence $S(j)$
becomes reduced to the identity for $j$ large enough. Since the definition of pseudo-solvable group may depend
on the generating set, its geometric content is not clear. However, when
$G \subset {\rm Diff}^{\omega} (M)$, then it is known
in some cases that $G$ is pseudo-solvable if and only if it is solvable. Furthermore, whenever
$S$ is constituted by diffeomorphisms sufficiently close to the identity in the analytic topology, it is shown
that every element in
$S (j)$ admits a holomorphic extension to a uniform neighborhood $M_{\tau}$ of $M$ in $\widetilde{M}$. Besides, the
sequence formed by these holomorphic extensions converge uniformly to the identity on $M_{\tau}$. In particular,
these elements form a sequence converging to the identity in the $C^{\infty}$-topology (though they do not converge
to the identity in the above defined analytic topology on $\diffM$).

Going back to pseudo-solvable groups, they are clearly very specific among finitely generated subgroups of ${\rm Diff}^{\omega} (M)$.
Actually, given $k \in N$, $k \geq 2$, consider the product $({\rm Diff}^{\omega} (M))^k$ of $k$-copies of
${\rm Diff}^{\omega} (M)$ endowed with the product analytic topology.
Then, by using the perturbation technique introduced in \cite{broer}, the following can easily be proved
(cf. \cite{rebelo-BBMS}): there is $G_{\delta}$-dense
set $\mathcal{U}_k \subset ({\rm Diff}^{\omega} (M))^k$ such that the diffeomorphisms $f_1, \ldots ,f_k$
generate a free group on $k$ letters provided that the
$k$-tuple $(f_1, \ldots ,f_k)$ belongs to $\mathcal{U}_k$. Combined to Ghys's criterion, this already
shows that ``non-discrete'' subgroups of ${\rm Diff}^{\omega} (M)$ are quite common: their study is hence justified.
However, due to the existence of more subtle examples/applications, for the
purposes of this paper, a finitely generated group $G \subset {\rm Diff}^{\omega} (M)$ will be called
{\it non-discrete}\, if it contains a sequence of elements $h_i$, $h_i \neq {\rm id}$ for every~$i \in \N$,
converging to the identity in, say, the $C^{\infty}$-topology. Naturally this assumption is weaker than
the existence of holomorphic extensions converging to the identity on a uniform domain
$\widetilde{M}_{\tau}$, but it enables us
to obtain results valid also in the smooth category.

Consider a point $q \in M$ and an open neighborhood $U \subset M$ of $q$. Denote by $G_U$
the pseudogroup of diffeomorphisms from open subsets of $U$ to $U$ induced by the restrictions
of in $G$. Then $G$ is said to be conjugate to a finite dimensional Lie group about $q$, if
there is a (germ of) finite dimensional Lie group generating a pseudogroup $\Gamma$
of local transformations on (open subset of) $U$ containing a sub-pseudogroup conjugate to $G_U$ by some (local)
$C^{\infty}$-diffeomorphism $\varphi : U \rightarrow M$ fixing $q$.

A fixed point for $F \in \diffM$ is said to be contracting (resp. expanding) if all the eigenvalues
of $D_pF$ have absolute value less than~$1$ (resp. greater than~$1$).
In the rest of this section, we shall consider finitely generated subgroups of ${\rm Diff}^{\omega} (M)$
satisfying the following conditions:
\begin{enumerate}
\item $G$ is a finitely generated ($C^{\infty}$) non-discrete subgroup of ${\rm Diff}^{\omega} (M)$.

\item $G$ contains a Morse-Smale diffeomorphism $F : M \rightarrow M$. Furthermore every fixed
point $p$ of some power of $F$ that is either contracting or expanding must be also non-resonant.

\item $G$ leaves no proper analytic subset of $M$ invariant.

\item If $U \subset M$ is an open set invariant by $G$, then the action of $G$ on $U$ does not leave
invariant any regular $C^{\infty}$-foliation.

\item $G$ is not conjugate to a finite dimensional Lie group about any of the contracting (or expanding)
fixed point of $F$.

\item Every element $g \neq {\rm id}$ in $G$ has a set of fixed points of codimension at least~$2$.
\end{enumerate}

Concerning the above made assumptions, it is clear that condition~(1) is totally natural since most of our
statements failed to be verified by discrete
groups (cf. below). Condition~(3) is very minor and intended only to simplify the discussion
(note that in dimension~$1$ the action is often asked
to be minimal) and, similarly, Condition~(4) is also natural (analogous to low rank
phenomena) as it serves to rule out a splitting of the dynamics of $G$.
The assumption made in condition~(5) is again harmless since the corresponding statements can easily
be worked out when $G$ is locally conjugate to a finite dimensional Lie group about a contracting/expanding
fixed point of $F$.
The assumption made in condition~(6) plays a role only in the proof of
Theorem~D which uses the results of \cite{david}.
However, it is not known whether or not Condition~(2) is intrinsic to the problems
our just a superfluous assumption needed here for technical reasons (the condition about
non-resonance in fixed points is nonetheless a minor one, cf. Remark~\ref{poincaredulac}).
Yet, extending to higher dimensions the methods of \cite{rebelo-ENS}, for example in the
area-preserving case, remains a major open problem.

Recall that a measure $\mu$ on $M$ is said to be quasi-invariant under $G \subset {\rm Diff}^{\omega} (M)$
if and only if for every element $g \in G$ the Radon-Nikodym derivative $d(g^{\ast}\mu)/d\mu$ exists. Naturally
this happens if and only if for every $g \in G$ the measures $g^{\ast} \mu$ and $\mu$ have the same
null-measure sets. In turn,
an accurate definition of the closure $\overline{G}$ of $G \subset {\rm Diff}^{\omega} (M)$ can be found in Section~3.
With the preceding assumptions, the first main result of \cite{rebelo-PLMS} admits
the following generalization to higher dimensions.

\vspace{0.1cm}

\noindent {\bf Theorem~A}. {\sl Suppose that $G$ satisfies conditions~(1) - (5) above. Let $\mu$ be a
probability measure on $M$ that is quasi-invariant by the closure
$\overline{G}$ of $G$. Then $\mu$ is absolutely continuous.}

\vspace{0.1cm}

Theorem~A has a couple of interesting corollaries whose statements require the following definition.

\begin{defnc}
\label{qcmeasures}
{\rm Let $\mu$ be a probability measure on $M$ and consider a group $G \subset {\rm Diff}^{\omega} (M)$.
Given $d \in \R_+^{\ast}$, the measure
$\mu$ will be called a $d$-quasi-volume for $G$ if there is a constant $C$ such that for every
point $x \in S^1$ and every element $g \in G$, the Radon-Nikodym derivative $d \mu /dg_{\ast} \mu$ satisfies
the estimate
$$
\frac{1}{C} \Vert {\rm Jac}\, [Dg] (x) \Vert^d \leq \frac{d\mu}{dg_{\ast} \mu} (x) \leq C \Vert {\rm Jac}\, [Dg] (x) \Vert^d \, ,
$$
where ${\rm Jac}\, [Dg] (x)$ stands for the Jacobian determinant of $Dg$ at the point~$x$.}
\end{defnc}

In particular, when the action of $G$ on $M$ preserves a conformal structure, the above definition reduces to the usual
notion of $d$-quasiconformal  measures familiar from Patterson-Sullivan theory, see \cite{sullivan}.
The first corollary of Theorem~A (paralleling Corollary~B of \cite{rebelo-PLMS}) states that
groups as in Theorem~A do not possess $d$-quasi-volumes other than those that are absolutely continuous. Note that, compared to previous statements, the two corollaries below are among the very first results valid for this
natural type of quasi-invariant measures beyond the conformal context.

\vspace{0.1cm}

\noindent {\bf Corollary B}. {\sl Suppose that $G$ is as in Theorem~A and let $\mu$ be a $d$-quasi-volume
for $G$. Then $\mu$ is absolutely continuous and $d$ equals the dimension of $M$.}

\vspace{0.1cm}

The previous results can be applied, in particular, to the foliations constructed in \cite{frankandI}. Thus we obtain:

\vspace{0.1cm}

\noindent {\bf Corollary C}. {\sl Every (transverse) $d$-quasi-volume measure for the foliations constructed in
\cite{frankandI} is absolutely continuous.}

\vspace{0.1cm}

Considering the ergodic theory of Kleinian groups and, in particular, the prominent role played by Patterson-Sullivan
measures, Corollary~B seems to indicate that the ergodic theory of non-discrete groups may share properties with
(geometrically finite) convex co-compact Kleinian groups. Similarly, in analogy with rational maps,
the Lyubich-Minsky laminations associated to the latter may be considered, cf. \cite{MLyubich}. These laminations
carry several types of natural measures, including Patterson-Sullivan type measures, cf. \cite{KLyubich}. It is then
natural to ask if (or when) the holonomy pseudogroup of the laminations in question is discrete. In fact, the
pseudogroup of these laminations seems to provide
a suitable framework to ask whether or not the ``pseudogroup arising from the several inverse branches
of a rational map is discrete''. The understanding of discrete/non-discrete situations, along with the ergodic
consequences immediately derived in the latter case, would constitute a fine addition to Sullivan's dictionary,
besides shedding additional light in the structure of the measures constructed in \cite{KLyubich}.

Going back to Theorem~A, let us also mention that is probably not hard to establish a comparison theorem between
quasi-invariant measures with Hausdorff dimension~$d$
and the corresponding $d$-dimensional Hausdorff measures analogous to the statement provided in \cite{rebelo-PLMS}.
The sequence of this article, however, goes
in the direction of rigidity for orbit equivalences where there is a few new questions.

Concerning orbit equivalences and rigidity results, let $G_1, G_2 \subset {\rm Diff}^{\omega} (M)$ be two
finitely generated groups. Suppose also that
$G_1$ is contained in a finite dimensional Lie group effectively acting on $M$ (or more generally contained
in a locally compact subset of ${\rm Diff}^{\omega} (M)$). In other words, the action of $G_1$ is ``linear''
according to our previous terminology.
Recall that the groups $G_1, G_2$ are said to be {\it continuously orbit equivalent}\, if there is a homeomorphism
$h$ of $M$ taking orbits of $G_1$ to orbits of $G_2$.
Note, in particular, that this definition does not require $h$ to be equivariant and, in particular, it does not
imply that $G_1, \, G_2$ are isomorphic. With these definitions,
one of our main results can be stated as follows.

\vspace{0.1cm}

\noindent {\bf Theorem~D}. {\sl Suppose that $G_1, \, G_2 \subset {\rm Diff}^{\omega} (M)$, where the action
of $G_1$ is supposed to be ``linear''. Suppose also that $G_2$ fulfils all the
conditions~(1) - (4) as well as condition~(6) in
Section~2. Then every homeomorphism $h: M \rightarrow M$ realizing an orbit equivalence
between the actions of $G_1, \, G_2$ on $M$ is equivariant and coincides with a real analytic diffeomorphism
of $M$}.

\vspace{0.1cm}

Theorem~D will be proved in Section~5 which includes a comparison between Theorem~D and some previous
related results. Being rather flexible, our methods yield further results whose proofs will
only be sketched there to keep the length of the article under control. In particular, it allows us to conclude that
every (measurable) {\it equivariant}\, orbit equivalence between $G_1, \, G_2$ as above coincides a.e. with an element of
${\rm Diff}^{\omega} (M)$. The question on whether
or not there exist non-equivariant orbit equivalences between these actions is very interesting and some implications
will be mentioned at the end of Section~5.
Another question that will only be mentioned in Section~5 has to do
with the ``linear'' assumption imposed on $G_1$. If this condition is dropped, the result hinges on deciding
whether or not a curious phenomenon of convergence involving the group $G_1$ can occur.

\section{Closure of groups and vector fields}

Albeit some of the material in this section is well-known \cite{frankandI},
\cite{belliart-1}, \cite{belliart-2} and \cite{rebelo-BBMS}, our discussion is mostly original.
In fact, a common trace in the mentioned papers is that they deal with
``analytic'' limits contained in the closure of $G$ while we consider smooth limits making sense
for $C^{\infty}$-groups (a brief discussion of smooth limits appeared previously in \cite{rebelo-TAMS}).
Actually the discussion conducted below involves various topologies
and their corresponding notions of limits so as to make clear what is valid beyond the ``analytic'' condition.
As a consequence, this section is more comprehensive than what is needed
later. Nonetheless I believe that clarifying these issues about topologies makes this discussion worth of interest.

In the sequel $M$ is a manifold and $G$ is a finitely generated group of diffeomorphisms of $M$. In terms of
regularity, it suffices to assume these diffeomorphisms
to be of class $C^{\infty}$ or real analytic: the discussion is more concerned with the notions of convergence
than with the regularity of the diffeomorphisms themselves.
Whereas the case $C^r$ is also sufficient at certain points,
the corresponding adaptations are going to be left to the reader.

To begin with, let $U \subseteq M$ be a connected open set and consider the restriction to $U$ of elements in $G$.
Consider also a vector field
$X$ defined on $U$ and denote by $\phi^t$ its local flow. The vector field $X$ is said to be {\it in the $C^{\infty}$-closure of
$G$ (relative to $U$)}\, if the following holds:
\begin{description}

\item[({\sl a})] $X$ is a $C^{\infty}$-vector field.

\item[({\sl b})] For every $t \in \R$ and $V \subset U$ such that $\phi^s$
is defined on $V$ for all $s \in [0,t]$, there exists a sequence
of elements $g_i \in G$ whose restrictions to $V$ converge in the $C^{\infty}$-topology to the diffeomorphism
$\phi^t : V \rightarrow \phi^t ( V)$.
\end{description}
Analogously, the vector field is said to be in the $C^k$-closure of $G$
if the sequence $g_i$ approximates $\phi^t$ only in the $C^k$-topology.

The analytic variant of the closure of $G$ is however slightly more subtle.
Fix a {\it real analytic}\, vector field $X$ defined on $U$ as above.
Then consider a neighborhood $\widetilde{U}$ of $U$ in $\widetilde{M}$ where $X$ admits a holomorphic
extension denoted by $\widetilde{X}$ whose (real) local flow will still be denoted by $\phi^t$.
Then $X$ is said to belong to the $C^{\omega}$-closure of $G \subset \diffM$
if $\widetilde{U}$ can be chosen so as to satisfy the following condition:
\begin{description}

\item[({\sl c})] For every $t \in \R$ and $\widetilde{V} \subset \widetilde{U}$
such that $\phi^s$ is defined on $\widetilde{V}$ for all $s \in [0,t]$, there exists a sequence
of elements $g_i \in G$ possessing holomorphic extensions $\tilde{g}_i$ to $\widetilde{V}$ and such that
these extensions $\tilde{g}_i$ converge uniformly on $\widetilde{V}$ to $\phi^t : \widetilde{V} \rightarrow
\phi^t (\widetilde{V}) \subset \widetilde{M}$.
\end{description}
To abridge notations, the sequence $\{ g_i \} \subset G$ is said to $C^{\omega}$-approximate $\phi^t$ when the
condition~({\sl c}) is verified (the phrase $X$ is in the $C^{\omega}$-closure of $G$ will also be used).
Similarly, when a statement is valid for all $C^r, \, C^{\infty}$ and $C^{\omega}$ closures of a group
$G$, then we shall simply talk about the closure of $G$ which will be denoted by $\overline{G}$.

Obviously the notion of belonging to the ($C^r, \, C^{\infty}, \,C^{\omega}$) closure of $G$ is relative to the
domain of definition of the vector field in question: this will implicitly be understood whenever
no misunderstanding is possible. Let us also point out that a similar notion can be defined for diffeomorphisms:
a diffeomorphism $\phi : U \rightarrow \phi (U) \subset M$
is said to be in the ($C^r, \, C^{\infty}$) closure of $G$ if it can be
approximated in the ($C^r, \, C^{\infty}$) topology by the restriction to $U$ of actual elements of $G$.
To be in the $C^{\omega}$-closure of $G$, the diffeomorphism $\phi : U \rightarrow \phi (U) \subset M$
needs to have a holomorphic extension to a neighborhood $\widetilde{U}
\subset \widetilde{M}$ of its domain of definition where it is, in addition, a uniform limit of
(holomorphic extensions of) elements in $G$.

It is clear from the above definition that vector fields in
$\overline{G}$ form a {\it pseudo-Lie algebra}\, in the sense that new vector fields obtained out of
vector fields in $\overline{G}$ by means of linear combinations (with constant coefficients) and
of Lie brackets still belong to the closure of $G$ relative to the open set where
they are defined.
Furthermore it is equally clear that this pseudo-Lie algebra is closed: if $\{ X_i \}
\subset \overline{G}$ is a sequence of vector fields defined on some common domain $U$ and
converging (in the corresponding topology) to a vector field $X_{\infty}$, then $X_{\infty}$ is automatically
contained in $\overline{G}$. Finally, $G$ acts on its
pseudo-Lie algebra by pull-backs. The pull-back of a vector field $X \in \overline{G}$ relative to the domain
of definition $U_X$ of $X$ by a diffeomorphism $f$ in $G$ lies in $\overline{G}$ relative to
its domain de definition $f^{-1} (U_X)$. All these remarks apply to local diffeomorphisms in $\overline{G}$ as well.

Vector fields in the closure of $G$ constitute a powerful tool to access the dynamics of $G$. Naturally,
when $G$ is contained in some (finite dimensional) Lie group acting on $M$, non-trivial vector fields in the
closure of $G$ that, in addition, are globally defined on $M$, always exist. In fact, being non-discrete, the closure of
$\overline{G}$ inside the Lie group in question is itself a Lie group with non-trivial Lie algebra. The mentioned vector fields
are therefore nothing but the image of vector in the Lie algebra in question by the induced representation in the space
of vector fields on $M$. This situation where $G$ is contained in a Lie group acting on $M$ will often be
called ``linear'' as opposed to the ``non-linear'' situation where $G$ is simply a finitely generated subgroup
of $\diffM$. In the ``non-linear'' setting, however, the existence of these vector fields is in general hard to establish.

Let us begin by pointing out the type of information required to construct non-identically zero
vector fields in the closure of a group. For this, consider
an open set $U \subset M$ which may
be identified to a ball about the origin in $\R^n$.
Suppose that a group $G$ of diffeomorphisms of $M$ contains a sequence of elements $\{ h_i \}$
verifying the following conditions.
  \begin{itemize}
    \item The restrictions to $U$ of the diffeomorphisms $h_i$ form a sequence $\{ h_{i \vert U} \}$ of
  one-to-one maps converging $C^{\infty}$ to the identity. Besides $h_{i \vert U} \neq {\rm id}$ for every $i \in \N$.

    \item For some $r \geq 1$ in $\N$, there is a uniform constant ${\rm Const}$ such that
    $$
    \Vert h_i -{\rm id} \Vert_{r, U} < {\rm Const} \Vert h_i -{\rm id} \Vert_{0, U}
    $$
    where, for $k \in \N$, the bars $\Vert \, . \, \Vert_{k, U}$ stand for the $C^k$-norm on $U$.
  \end{itemize}
Then we have:

\begin{prop}
\label{vectorfield1}
Assume that $G$ contains a sequence of elements satisfying the above conditions for some $r \geq 1$.
Then there exists a non-identically zero vector field $X$ defined on $U$ and contained in the $C^{r-1}$-closure of $G$.
\end{prop}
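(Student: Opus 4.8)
The plan is to exploit the two hypotheses by rescaling the "difference" $h_i - \mathrm{id}$ by its $C^0$-norm and extracting a limit. More precisely, write $\varepsilon_i := \Vert h_i - \mathrm{id}\Vert_{0,U} > 0$ (positive since $h_{i|U} \neq \mathrm{id}$), and set $Y_i := (h_i - \mathrm{id})/\varepsilon_i$, viewed as a map $U \to \R^n$ (using the identification of $U$ with a ball in $\R^n$). By construction $\Vert Y_i \Vert_{0,U} = 1$, and by the second hypothesis $\Vert Y_i \Vert_{r,U} < \mathrm{Const}$ uniformly in $i$. First I would invoke the Arzel\`a–Ascoli theorem: the family $\{Y_i\}$ is bounded in $C^r$, hence precompact in $C^{r-1}$ on compact subsets (shrinking $U$ slightly if necessary, or passing to an exhaustion). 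Extract a subsequence $Y_{i_k} \to X$ in the $C^{r-1}$-topology, where $X$ is a $C^{r-1}$ vector field on $U$. Since $\Vert Y_{i_k}\Vert_{0,U} = 1$ for all $k$ and $C^{r-1}$-convergence implies $C^0$-convergence, we get $\Vert X \Vert_{0,U} = 1$; in particular $X \not\equiv 0$. (One should be slightly careful that the $C^0$-norm is attained or approximately attained on a fixed compact subset so that the limit does not "escape to the boundary"; this is handled by choosing the exhaustion appropriately, or by noting the second hypothesis already forces equicontinuity up to the boundary of a slightly smaller ball.)

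Next I would show that $X$ so obtained lies in the $C^{r-1}$-closure of $G$, i.e. that the local flow $\phi^t$ of $X$ is approximated in $C^{r-1}$ by elements of $G$. The idea is the classical one relating "infinitesimal generators" to iteration: since $h_i = \mathrm{id} + \varepsilon_i Y_i$ with $Y_i \to X$, the diffeomorphism $h_i$ is, to first order, the time-$\varepsilon_i$ map of the vector field $Y_i \approx X$. Fix $t \in \R$ and a relatively compact $V \subset U$ on which $\phi^s$ is defined for $s \in [0,t]$. Put $n_i := \lfloor t/\varepsilon_i \rfloor$, so $n_i \varepsilon_i \to t$ and $n_i \to \infty$. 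I would then prove that the iterates $h_i^{\,n_i}$, restricted to $V$, converge in $C^{r-1}$ to $\phi^t$. This is an estimate comparing the discrete orbit $x, h_i(x), h_i^2(x), \dots$ with the continuous trajectory of $X$ through $x$: a Gronwall-type argument using the uniform $C^r$-bound on $Y_i$ (which controls the $C^{r-1}$-norms of the derivatives of the iterates, preventing blow-up of the constants) together with $Y_i \to X$ in $C^{r-1}$ and $\varepsilon_i \to 0$. One obtains $\Vert h_i^{\,n_i} - \phi^t \Vert_{r-1, V} \to 0$ along the chosen subsequence, which is exactly condition (b) (in its $C^{r-1}$ form) for the vector field $X$ relative to $U$.

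The main obstacle is the second step, specifically the $C^{r-1}$-control of the long iteration $h_i^{\,n_i}$ as $n_i \to \infty$. A naive bound on the $C^{r-1}$-norm of an $n$-fold composition grows like the product of the norms of the factors and would be catastrophic; the point is that each factor $h_i$ is $C^0$-close to the identity, so its derivatives up to order $r$ are close to those of the identity (with an error of size $O(\varepsilon_i)$ by the hypothesis $\Vert h_i - \mathrm{id}\Vert_{r,U} < \mathrm{Const}\cdot\varepsilon_i$), and one has to show that these $O(\varepsilon_i)$ errors accumulate only additively over the $n_i \sim t/\varepsilon_i$ steps, yielding an $O(1)$ total that moreover converges to the correct limit. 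This is where the careful choice of $r$ matters and why one loses a derivative, ending up in the $C^{r-1}$-closure rather than the $C^r$-closure. Everything else — the extraction of $X$, its nonvanishing, the reduction to a fundamental domain $V$ — is routine once this iteration estimate is in hand. Finally, I would remark that in the real-analytic setting the same scheme produces, with more work on uniform holomorphic extensions, a vector field in the $C^\omega$-closure, but that refinement is deferred to the analytic statements in Section~4.
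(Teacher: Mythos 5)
Your proposal is correct and follows essentially the same route as the paper: rescale $h_i-\mathrm{id}$ by its $C^0$-norm, extract a $C^{r-1}$-limit via Arzel\`a--Ascoli (nonvanishing since the rescaled maps have unit $C^0$-norm), and recover the flow $\phi^t$ as the limit of the iterates $h_i^{[t/\varepsilon_i]}$, which is exactly the Euler polygonal-method step the paper invokes. The iteration estimate you flag as the main obstacle is precisely the point the paper treats as a ``straightforward application of Euler's polygonal method,'' so no genuinely different ideas are involved.
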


\begin{proof}
For every $i \in \N$, let $C_i$ denote the $C^0$-norm of $h_i-{\rm id}$ on $U$, i.e. $C_i = \Vert h_i -{\rm id} \Vert_{0, U}$.
Then consider the vector field $X_i$ defined on $U$ by
\begin{equation}
X_i (x) = \frac{1}{C_i} \cdot (h_i (x) -x) \, ,
\label{thefield}
\end{equation}
where $x = (x_1, \ldots , x_n)$ and where $h_i (x) -x$ stands for the vector going from the point $x$ to the point
$h_i(x)$.

Next note that the $C^{r}$-norm on $U$ of the vector fields $X_i$ is
uniformly bounded by ${\rm Const}$. Thus, by applying Ascoli-Arzela
theorem, we can suppose without loss of generality that the
sequence of vector fields $X_i$ converge in the $C^{r-1}$-topology to
a $C^{r-1}$ vector field $X_{\infty}$ defined on $U$. The definition
of $C_i$ immediately implies that $X_{\infty}$ is not
identically {\it zero} since $\Vert X_i(x) \Vert_{0, U} =1$ for every $i \in \N$.

Finally let us prove that the local flow $\phi^t$ of
$X_{\infty}$ can be $C^{r-1}$-approximated by (suitable restrictions of) elements in $G$.
For this, consider an open set $U_0$ compactly contained in $U$
and $t_0 \in \R$ so that $\phi^t : U_0 \rightarrow U \subset \R^n$
is defined whenever $0 \leq t \leq t_0$. It is then a straightforward application of Euler's
polygonal method to check that the (restriction to $U_0$) of the elements in
$G$ given by $h_i^{[tt_0 /C_i]}$ converges on $U_0$ to $\phi^t_X$ in the $C^{r-1}$-topology
(where the brackets $[.]$ stand for the integral part).
The proof of the proposition is over.
\end{proof}

Note that the statement above {\it cannot}\, directly be adapted to handle the $C^{\infty}$-case. An analytic
version of it, however, can easily be obtained.

\begin{prop}
\label{vectorfield1star}
Assume that $G \subset \diffM$ contains a sequence $\{ h_i\}$ of elements satisfying the following conditions:
\begin{itemize}
  \item The elements $\{ h_i\}$ possess holomorphic extensions $\{ \tilde{h}_i\}$ to a (fixed) neighborhood $\widetilde{U}
  \subset \widetilde{M}$ of $U \subset M \subset \widetilde{M}$. Moreover $h_{i} \neq {\rm id}$ for every $i \in \N$.

  \item Denoting by $\Vert . \Vert_{\widetilde{U}}$ the $C^0$-norm of a function defined on $\widetilde{U}$,
  the sequence $\{ h_i\}$ is such that $\Vert \tilde{h}_i -{\rm id} \Vert_{\widetilde{U}}$ converges to zero as $i \rightarrow
  \infty$.

  \item There exists $V \subset U$ and a neighborhood $\widetilde{V} \subset \widetilde{M}$ compactly contained
  in $\widetilde{U}$ along with a uniform constant~$C$ such that
  $$
    \Vert \tilde{h}_i -{\rm id} \Vert_{\widetilde{U}} < {\rm Const} \Vert \tilde{h}_i -{\rm id} \Vert_{\widetilde{V}} \, ,
  $$
  for every $i \in \N$.
\end{itemize}
Then there is a non-identically zero analytic vector field $X$ defined on $V$ and contained in the $C^{\omega}$-closure
of $G$.
\end{prop}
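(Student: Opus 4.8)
The plan is to mimic the proof of Proposition~\ref{vectorfield1}, but working on the complexification $\widetilde{U}$ with the $C^0$-norm $\Vert\,.\,\Vert_{\widetilde{U}}$ in place of the $C^r$-norms, using Cauchy estimates to supply the compactness that Ascoli--Arzela gave us in the smooth case. First I would set $C_i = \Vert \tilde{h}_i - {\rm id}\Vert_{\widetilde{V}}$ and define on $\widetilde{U}$ the holomorphic vector fields
$$
\widetilde{X}_i(z) = \frac{1}{C_i}\bigl(\tilde{h}_i(z) - z\bigr).
$$
By the third hypothesis the $C^0$-norm of $\widetilde{X}_i$ on $\widetilde{U}$ is bounded above by the uniform constant ${\rm Const}$, while by construction $\Vert \widetilde{X}_i\Vert_{\widetilde{V}} = 1$ for every $i$. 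Now $\{\widetilde{X}_i\}$ is a family of holomorphic maps on $\widetilde{U}$ that is uniformly bounded on $\widetilde{U}$, so Montel's theorem lets us pass to a subsequence converging uniformly on compact subsets of $\widetilde{U}$ to a holomorphic vector field $\widetilde{X}_\infty$. Restricting to a slightly smaller neighborhood (still compactly containing $\widetilde{V}$), the convergence is uniform there, and since the $\widetilde{X}_i$ had $C^0$-norm exactly $1$ on $\widetilde{V}$ the limit $\widetilde{X}_\infty$ is not identically zero; its restriction $X$ to $V$ (the real locus) is a nonzero real-analytic vector field, whose holomorphic extension is $\widetilde{X}_\infty$.

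The second step is to verify that $X$ lies in the $C^{\omega}$-closure of $G$ in the sense of condition~({\sl c}). Shrink once more to an open set $\widetilde{U}_0$ with $\widetilde{V}\subset\widetilde{U}_0$ and $\overline{\widetilde{U}_0}$ compact in $\widetilde{U}$, and fix $t_0>0$ so that the local flow $\phi^t$ of $\widetilde{X}_\infty$ is defined on $\widetilde{U}_0$ with values in $\widetilde{U}$ for $0\le t\le t_0$. The claim is that the elements $h_i^{[tt_0/C_i]}\in G$, whose holomorphic extensions are the iterates $\tilde{h}_i^{[tt_0/C_i]}$, converge uniformly on $\widetilde{U}_0$ to $\phi^t$ as $i\to\infty$. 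This is again Euler's polygonal method: one step of $\tilde{h}_i$ moves a point by $C_i\widetilde{X}_i(z) = \tilde{h}_i(z)-z$, and $\widetilde{X}_i\to\widetilde{X}_\infty$ uniformly on the relevant compact set, so after $[tt_0/C_i]$ steps (total "time" $\approx t$) the polygonal orbit stays $o(1)$-close to the genuine flow line. One packages the estimate in the standard way: the one-step error between the Euler update and the time-$C_i$ flow of $\widetilde{X}_\infty$ is $O(C_i^2) + C_i\cdot o(1)$, there are $O(1/C_i)$ steps, and Gronwall's inequality (with the Lipschitz constant of $\widetilde{X}_\infty$, finite by Cauchy estimates on the compact set) controls the accumulation, giving an error that tends to $0$. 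By the closedness of the $C^\omega$-closure as a pseudo-Lie algebra noted in the text, nothing more is needed.

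The one point that requires slightly more care than in the smooth case — and which I expect to be the main (mild) obstacle — is bookkeeping the nested neighborhoods: $\widetilde{V}\subset\widetilde{U}_0\subset\widetilde{U}$, with the Euler iterates needing room to roam inside $\widetilde{U}$ while the flow is only controlled on $\widetilde{U}_0$, and one must make sure that for $i$ large the $[tt_0/C_i]$-step $\tilde h_i$-orbit starting in $\widetilde{U}_0$ does not leave $\widetilde{U}$ before the estimate closes up. This is handled exactly as in the proof of Proposition~\ref{vectorfield1}: choose $\widetilde{U}_0$ so that a definite neighborhood of $\phi^{[0,t_0]}(\widetilde{U}_0)$ still lies in $\widetilde{U}$, and note that the Euler orbit stays within that neighborhood once the step error is small enough. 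With these domains fixed, condition~({\sl c}) holds for every $t$ and every $\widetilde{W}\subset\widetilde{U}$ on which $\phi^s$ is defined for $s\in[0,t]$ — one only has to iterate the time-$t_0$ estimate finitely many times and invoke the flow property — which completes the proof.
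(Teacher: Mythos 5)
Your proposal is correct and follows exactly the route the paper takes: its proof of Proposition~\ref{vectorfield1star} is simply the statement that one adapts the proof of Proposition~\ref{vectorfield1}, replacing Ascoli--Arzela by Montel's theorem, which is precisely what you carry out (with the additional, welcome, detail on the Euler scheme and the nesting of domains). The only point you gloss is why non-vanishing of $\widetilde{X}_\infty$ on $\widetilde{V}$ forces its restriction to the real locus $V$ to be non-identically zero (identity principle along the totally real submanifold), but this is a one-line remark and does not affect the argument.
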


\begin{proof}
The proof is an immediate adaptation of the proof of Proposition~\ref{vectorfield1}. It suffices to replace
Ascoli-Arzela theorem by Montel theorem.
\end{proof}

Before continuing our discussion, let us state a useful elementary lemma.
Fixed $d \in \N$, denote by ${\rm Pol}\, (d,n)$
the space of maps from $\R^n \rightarrow \R^n$ whose components are
polynomials of degree at most $d$ in the multi-variable $x = (x_1, \ldots ,x_n)$.

\begin{lema}
\label{vectorfield4}
Let $\{ P_j \} \subset {\rm Pol}\, (d,n)$ be a sequence of polynomials converging to the
identity, with $P_j \neq {\rm id}$ for every $j \in \N$. Let $V \subset \R^n$ be an open set
and suppose we are given $r \in \N$. Then there is a constant $C=C(d,n, V,r)$ such that
\begin{equation}
\Vert P_j -{\rm id} \Vert_{r, V} < C_r \Vert P_j -{\rm id} \Vert_{0, V} \, \label{pol1}
\end{equation}
uniformly on~$j$. Similarly if $\widetilde{V} \in \C^n$ is a compact neighborhood of $V$ and
we consider the polynomials $P_j$ as defined on $\C^n$, then there is a constant $C$ such that
\begin{equation}
\Vert P_j -{\rm id} \Vert_{\widetilde{V}} < C_r \Vert P_j -{\rm id} \Vert_{0, V} \, \label{pol2}
\end{equation}
uniformly on~$j$.
\end{lema}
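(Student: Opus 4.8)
The plan is to exploit the finite dimensionality of the space $\mathrm{Pol}(d,n)$ of polynomial maps of degree $\le d$. Since all norms on a finite-dimensional vector space are equivalent, once we realise both $\Vert P - \mathrm{id}\Vert_{r,V}$ and $\Vert P - \mathrm{id}\Vert_{0,V}$ as norms (or at least as a norm and a seminorm with matching kernel) on the relevant finite-dimensional space, the desired inequality follows with a uniform constant. The main point to be careful about is that $\Vert \cdot \Vert_{0,V}$ restricted to $\mathrm{Pol}(d,n)$ is genuinely a norm, i.e. a nonzero polynomial map cannot vanish identically on the open set $V$; this is clear since a polynomial vanishing on a nonempty open subset of $\R^n$ is the zero polynomial.

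Here is how I would organise the argument. First, set $Q_j = P_j - \mathrm{id}$, so $\{Q_j\} \subset \mathrm{Pol}(d,n)$ (note $\mathrm{id}$ itself has degree $1 \le d$ since $d \ge 1$; if $d=0$ one argues separately or simply notes $P_j \to \mathrm{id}$ forces the statement vacuously) and $Q_j \ne 0$ for every $j$. Consider the two functionals on the finite-dimensional vector space $W = \mathrm{Pol}(d,n)$ given by $Q \mapsto \Vert Q \Vert_{r,V}$ and $Q \mapsto \Vert Q \Vert_{0,V}$. Both are norms on $W$: they are visibly seminorms, positive homogeneity and the triangle inequality are immediate, and a polynomial map with $\Vert Q \Vert_{0,V} = 0$ vanishes on the open set $V$ hence is the zero element of $W$ (and $\Vert Q\Vert_{r,V} \ge \Vert Q\Vert_{0,V}$ anyway). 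By equivalence of norms on the finite-dimensional space $W$, there is a constant $C = C(d,n,V,r)$ with $\Vert Q \Vert_{r,V} \le C \Vert Q \Vert_{0,V}$ for all $Q \in W$; applying this to $Q = Q_j$ yields \eqref{pol1}. Note that this constant does not depend on $j$ at all, which is exactly the uniformity claimed; the convergence $P_j \to \mathrm{id}$ is not even needed for \eqref{pol1} beyond guaranteeing the $Q_j$ are eventually small (and it is harmless).

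For the complexified statement \eqref{pol2}, I would run the same scheme with $W_{\C} = \mathrm{Pol}(d,n)$ now regarded as the (finite-dimensional, over $\C$, or over $\R$) space of polynomial maps $\C^n \to \C^n$ of degree $\le d$; each real $P_j$ extends uniquely to an element of $W_\C$. On $W_\C$ compare the norm $Q \mapsto \Vert Q \Vert_{\widetilde V}$ (sup over the compact set $\widetilde V \subset \C^n$) with the norm $Q \mapsto \Vert Q\Vert_{0,V}$ (sup over $V \subset \R^n \subset \C^n$); the latter is still a norm on $W_\C$ because a polynomial map $\C^n \to \C^n$ vanishing on the real open set $V$ has all Taylor coefficients zero, hence is identically zero. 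Equivalence of norms on $W_\C$ gives a uniform $C$ with $\Vert Q \Vert_{\widetilde V} \le C \Vert Q \Vert_{0,V}$, and specialising to $Q = Q_j = P_j - \mathrm{id}$ gives \eqref{pol2}.

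The only genuine subtlety — and the step I would flag as the ``main obstacle,'' though it is mild — is verifying that $\Vert \cdot \Vert_{0,V}$ is a bona fide norm on the finite-dimensional polynomial space rather than merely a seminorm, since the whole argument collapses without it; this rests on the unique-continuation fact that a polynomial vanishing on a nonempty open set is zero, and on the parallel fact over $\C$ that a polynomial vanishing on a real open set is zero. Everything else is the standard compactness/equivalence-of-norms argument on a finite-dimensional space, and the independence of the resulting constants from $j$ is automatic. (One could alternatively prove the estimate by hand: write $Q_j$ in the monomial basis, bound each coefficient by a multiple of $\Vert Q_j\Vert_{0,V}$ using finitely many interpolation points in $V$, then bound the $C^r$ or $\widetilde V$ norm by the coefficients — but the norm-equivalence phrasing is cleaner and I would present that.)
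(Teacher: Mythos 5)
Your proposal is correct and is essentially the paper's own argument: the normalization-and-compactness contradiction given there (rescale $P_j-{\rm id}$ so its largest coefficient has a fixed size, extract a convergent subsequence in the finite-dimensional space ${\rm Pol}\,(d,n)$, and note that the nonzero limit polynomial cannot vanish on the open set $V$) is precisely the standard proof of the equivalence of norms you invoke. The only cosmetic differences are that the paper deduces (\ref{pol1}) from (\ref{pol2}) via the Cauchy formula instead of treating the $C^r$-norm directly, and that your phrasing makes explicit that the hypothesis $P_j \to {\rm id}$ is not actually needed.
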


\begin{proof}
By virtue of Cauchy formula, the second assertion in the statement implies the first one. Thus it suffices
to prove Estimate~(\ref{pol2}). Since $\{ P_j \}$ has bounded degree and it converges to the identity
in the $C^0$-topology over the open
set $U$, it is clear that the coefficients of $P_j$ converge to the corresponding
coefficients of the identity map $x = (x_1, \ldots ,x_n) \mapsto (x_1, \ldots ,x_n)$.

Suppose that a small $\delta >0$ is fixed. Modulo
multiplying each $P_j-{\rm id}$ by some constant $c(j) \in \R$, we may assume that, for every $j$, all the coefficients
of $P_j -{\rm id}$ have absolute value bounded by~$\delta$ and, in addition, that at least one of these coefficients has
absolute value exactly equal to~$\delta$. Furthermore, if $\delta >0$ is chosen sufficiently small, then
the norm $\Vert P_j-{\rm id} \Vert_{\widetilde{V}}$ is still bounded by~$1$ (recall that $\widetilde{V}$ is compact).
Now suppose for a contradiction that the statement is false. Since $\Vert P_j -{\rm id} \Vert_{\widetilde{V}} \leq 1$,
modulo passing to a subsequence, we can suppose that $\Vert P_j -{\rm id} \Vert_{0, V} \rightarrow 0$, otherwise there
is nothing to be proved. Nonetheless the space
${\rm Pol}\, (d,n)$ is locally compact since it is modeled by some finite dimensional Euclidean space.
Furthermore all the coefficients of $P_j -{\rm id}$ have absolute value bounded by~$\delta$ so that,
again modulo passing to a subsequence, we can assume
that the sequence $\{ P_j -{\rm id} \}$ converges towards a polynomial map $Q_{\infty}$. The polynomial
$Q_{\infty}$ is not identically zero since at least one of its coefficients has absolute value equal to~$\delta >0$.
A contradiction then arises from observing that $\Vert Q_{\infty} \Vert_{0,V}$ must be zero since it is the limit
of the sequence $\{ \Vert P_j -{\rm id} \Vert_{0, V} \}$ which converges to zero. The lemma is proved.
\end{proof}

For the rest of this section, we shall exclusively deal with groups of diffeomorphisms of $M$ that are smooth or
real analytic, the more technical adaptations related to the $C^r$-case will be left to the reader.

Consider a (smooth or real analytic)
diffeomorphism $F$ of $M$ fixing a point $p \in U$. In local coordinates, we can think of $F$
as a local diffeomorphism of $\R^n$ fixing the origin. Suppose that the point $p$ (identified to the origin)
is a contracting fixed point of $F$.
Consider also a sequence of elements $\{ g_i \} \subset G$ whose
restrictions $g_{i,U}$ to $U$ are different from the identity for every~$i \in \N$. The sequence
$\{ g_{i,U} \}$ is supposed to satisfy {\it one of the following assumptions}.
\begin{description}
  \item[{\rm SC}] The sequence $\{ g_{i,U} \}$ converges to the identity on $U$ in the $C^{\infty}$-topology.
  \item[{\rm HC}] There is a neighborhood $\widetilde{U} \subset \widetilde{M}$ of $U$ where
  all the maps $g_{i,U}$ have a holomorphic extension denoted by $\tilde{g}_{i,U}$. Moreover the sequence
  $\{ \tilde{g}_{i,U} \}$ converges uniformly to the identity on $\widetilde{U}$.
\end{description}

The rest of this section is devoted to the proof of a couple of propositions, namely:

\begin{prop}
\label{vectorfield3}
For $U$ as above, suppose that the group $G$ contains an element $F$ which has a non-resonant
contracting fixed point $p \in U$. Suppose, in addition, that $G$ contains a sequence of elements
$\{ g_i \}$ satisfying condition~(SC). Then, given $r \in \N$ and modulo reducing $U$, there exists
a non-identically zero vector field $X$ defined on $U$ and contained in the
$C^{r-1}$-closure of $G$. Besides $X$ has the same regularity as $F$, i.e. $X$ is smooth/analytic provided
that $F$ is so.
\end{prop}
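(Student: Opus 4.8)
The idea is to reduce the hypothesis (SC) to the situation already handled by Proposition~\ref{vectorfield1}, by using the contracting non-resonant fixed point $p$ of $F$ as a ``magnifying glass''. Since $p$ is contracting and non-resonant, the Poincaré-Dulac (here actually Poincaré linearization, since there are no resonances) theorem lets us choose local analytic/smooth coordinates on a neighborhood of $p$ in which $F$ becomes its linear part $A = D_pF$, with all eigenvalues of modulus $<1$; after a further linear change we may assume $\|A\| < 1$ in the operator norm, so $F^n \to p$ uniformly on a small ball $U$ around the origin and $F^n$ contracts distances by a factor that is exponentially small in $n$. The plan is then to replace the given sequence $g_i$ by the renormalized sequence
\[
\widehat{g}_{i,n} \;=\; F^{-n} \circ g_i \circ F^{n},
\]
defined and close to the identity on $U$ once $n = n(i)$ is large enough, and to show that for a suitable choice of $n(i)\to\infty$ this renormalized sequence satisfies the two bulleted hypotheses of Proposition~\ref{vectorfield1} for the chosen $r$.

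The key computation is the effect of conjugation by the linearized contraction on the $C^r$-jet of $g_i - \mathrm{id}$. Writing $g_i = \mathrm{id} + u_i$ with $u_i$ small in $C^\infty$, one has $\widehat{g}_{i,n} - \mathrm{id} = A^{-n}\, u_i(A^n x)$ (up to the coordinate change, which only contributes controlled lower-order terms). Expanding $u_i$ in its Taylor series at the origin, the degree-$k$ homogeneous part of $u_i$ gets multiplied, under this conjugation, by a factor comparable to $A^{-n}A^{nk} = A^{n(k-1)}$, which for $k\ge 2$ is exponentially \emph{small} in $n$, while the linear part ($k=1$) is essentially unchanged in size and the constant part ($k=0$) is killed by $u_i(0)=0$. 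Consequently, after conjugation the linear part of $\widehat{g}_{i,n}-\mathrm{id}$ dominates all higher-order terms: its $C^r$-norm on $U$ becomes bounded by a uniform constant times its $C^0$-norm on $U$, which is exactly the second hypothesis of Proposition~\ref{vectorfield1} with the stated $r$. (This is the same mechanism by which Lemma~\ref{vectorfield4} controls polynomial maps near the identity; in effect the renormalization makes $g_i$ look, to order $r$, like an affine map.) One chooses $n(i) \to \infty$ slowly enough that $\widehat{g}_{i,n(i)}$ still converges to the identity in $C^\infty$ on $U$ and fast enough that the higher-order Taylor terms are negligible compared with the linear term; since $g_i \to \mathrm{id}$ in $C^\infty$ by (SC), both requirements can be met simultaneously. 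Note $\widehat{g}_{i,n(i)}\ne\mathrm{id}$ because $g_i\ne\mathrm{id}$ and conjugation is injective.

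Once $\{\widehat{g}_{i,n(i)}\}$ is shown to satisfy the hypotheses of Proposition~\ref{vectorfield1} for $r$, that proposition produces a non-identically-zero vector field $X$ on (a possibly smaller) $U$ lying in the $C^{r-1}$-closure of the group generated by these conjugates; since each $\widehat{g}_{i,n(i)} \in G$, this is the $C^{r-1}$-closure of $G$, as desired. The regularity claim is immediate from the construction: the linearizing coordinates are analytic when $F$ is analytic, the $X_i$ of \eqref{thefield} inherit the regularity of $g_i$ and hence of the ambient diffeomorphisms, and the limiting argument is the same. The main obstacle I expect is bookkeeping the error from the linearizing change of coordinates: it is not the identity, so conjugating by $F^n$ is only \emph{approximately} conjugating by $A^n$, and one must check that the discrepancy is swamped by the exponential gain $A^{n(k-1)}$ for $k\ge 2$ — this requires keeping the coordinate change fixed while only $n$ grows, and estimating a composition of the form (coordinate change)$\circ A^{-n}\circ u_i \circ A^n \circ$(inverse coordinate change) term by term in its Taylor expansion up to order $r$. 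A secondary point to be careful about is ensuring, by shrinking $U$ if necessary, that $F^n(U)\subset U$ for all $n\ge 0$ and that all the conjugates $\widehat{g}_{i,n(i)}$ are genuinely defined and one-to-one on $U$.
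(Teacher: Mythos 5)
There is a genuine gap, and it sits exactly at your key estimate. In linearizing coordinates write $F(x)=(\lambda_1x_1,\ldots,\lambda_nx_n)$ with $|\lambda_1|<\cdots<|\lambda_n|<1$. Under the renormalization $F^{-k}\circ g_i\circ F^{k}$, a monomial $c\,x^{\alpha}e_l$ of degree $|\alpha|=k'$ in $g_i-{\rm id}$ is multiplied by $(\lambda^{\alpha}/\lambda_l)^{k}$, and this is \emph{not} exponentially small for all $k'\ge 2$: when the moduli of the eigenvalues differ (say $|\lambda_1|$ tiny and $|\lambda_n|$ close to $1$) one has $|\lambda_n^{2}/\lambda_1|>1$, so low-degree nonlinear terms can be \emph{expanded} by the conjugation. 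Your heuristic $A^{-n}A^{nk}=A^{n(k-1)}$ treats $A$ as a scalar and is only valid when all eigenvalues have the same modulus. Worse, you assert the constant part is ``killed by $u_i(0)=0$'', but nothing in hypothesis (SC) forces $g_i(p)=p$; when $g_i(p)\neq p$ the constant term is blown up like $|\lambda_1|^{-k}$, so for fixed $i$ the conjugates $\widehat g_{i,n}$ do not converge to the identity as $n\to\infty$, and your ``slowly enough / fast enough'' prescription for $n(i)$ hides a real tension that is never resolved. As a consequence the claim that the renormalized maps become affine to order $r$ (from which you also draw the regularity of $X$) is false in general, and the verification of the two hypotheses of Proposition~\ref{vectorfield1} is not actually carried out.

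The paper's proof confronts precisely these points. It fixes $N$ with $|\lambda_n|^{N}<|\lambda_1|$ and takes $r>N$, so that only the Taylor \emph{remainder} beyond order $r$ is uniformly contracted by the renormalization (Estimate~(\ref{controllingtail})); the degree-$\le r$ polynomial part is not assumed to shrink to its linear part, but is controlled as a whole by Lemma~\ref{vectorfield4}, which bounds the $C^{r}$-norm of a bounded-degree polynomial close to the identity by a constant times its $C^{0}$-norm, whatever monomials dominate. The choice of $k(i)$ is then a stopping time: the smallest $K_i$ for which the renormalized polynomial part exceeds the threshold $\max\{10\delta_i,10C\delta_i\}$ dominating the remainder, and minimality gives both the $C^{\infty}$-convergence to the identity and the ratio condition of Proposition~\ref{vectorfield1}; a separate case analysis handles $g_i(p)=p$, where such a $K_i$ may not exist and one argues via concentration on the degree-$N$ part. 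Finally, the limit field is shown to be \emph{polynomial} (not affine) in the linearizing coordinates, which is how the smooth/analytic regularity is obtained. Your overall strategy (linearize, renormalize by $F$, feed into Proposition~\ref{vectorfield1}) is the right one, but the quantitative heart of the argument is missing and, as stated, incorrect.
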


This proposition admits the following ``analytic'' analogue:

\begin{prop}
\label{vectorfield3star}
For $U$ as above, suppose that the group $G$ contains an element $F$ which has a non-resonant
contracting fixed point $p \in U$. Suppose, in addition, that $G$ contains a sequence of elements
$\{ g_i \}$ satisfying condition~(HC). Then, modulo reducing $U$, there exists
a non-identically zero real analytic $X$ defined on $U$ and contained in the $C^{\omega}$-closure of $G$.
\end{prop}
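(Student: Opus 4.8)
The plan is to pass to the linear model of $F$ at $p$ and then build, from the $g_i$ and from powers of $F$, a sequence of elements of $G$ fulfilling the hypotheses of Proposition~\ref{vectorfield1star}. First I would linearize $F$. A contracting fixed point is automatically in the Poincar\'e domain (when all multipliers have modulus $<1$, only finitely many resonance relations are possible), and $p$ is non-resonant, so $F$ is analytically linearizable: a germ of analytic diffeomorphism $\varphi$ about $p$, with holomorphic extension, conjugates $F$ to its linear part $\Lambda$ (cf.\ Remark~\ref{poincaredulac}). Since conjugation carries vector fields in the $C^{\omega}$-closure to vector fields in the $C^{\omega}$-closure, I may assume $F=\Lambda$, a linear contraction; write $M<1$ for its spectral radius, $\mu=\min_{j}|\lambda_{j}|$, and set $D_{0}=\lfloor|\log\mu|/|\log M|\rfloor$, chosen so that $M^{D_{0}+1}<\mu$. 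Fix a complex ball $\widetilde B\ni p$ on which the late $\tilde g_{i}$ are holomorphic with $\|\tilde g_{i}-{\rm id}\|_{\widetilde B}\to 0$.

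The engine is renormalisation by the contraction: for $w\in G$, the element $F^{-n}\!\circ w\circ F^{n}$ reads $x\mapsto x+\Lambda^{-n}\theta(\Lambda^{n}x)$ with $\theta=w-{\rm id}$. Since $\Lambda^{n}$ shrinks a fixed ball onto a tiny ellipsoid about $p$, on that ball $\Lambda^{-n}\theta(\Lambda^{n}x)$ agrees, up to an error of size $\|\Lambda^{-n}\|\,\|\Lambda^{n}\|^{D_{0}+1}\to 0$, with the polynomial $\sum_{|m|\le D_{0},\,j}(c_{m})_{j}\lambda^{n(m-e_{j})}x^{m}e_{j}$ coming from the degree-$\le D_{0}$ jet of $\theta$ at $p$: monomials with $|\lambda^{m-e_{j}}|<1$ die, those with $|\lambda^{m-e_{j}}|=1$ persist rigidly, and those that get amplified necessarily have degree $\le D_{0}$. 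Thus, dividing by the $C^{0}$-norm exactly as in the proof of Proposition~\ref{vectorfield1star} (Montel's theorem in place of Ascoli--Arzel\`a), one obtains in the limit a nonzero \emph{polynomial} vector field of bounded degree, and Lemma~\ref{vectorfield4} furnishes the estimate comparing its sup-norm on $\widetilde B$ with that on an interior ball — which is exactly the gradient estimate required by Proposition~\ref{vectorfield1star}.

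It remains to select $w$ and handle the cases. If for some $i$ one has $F^{-n}g_{i}F^{n}\to{\rm id}$ as $n\to\infty$ (e.g.\ $g_{i}$ is tangent to the identity at $p$ to order $>D_{0}$ — as high iterated commutators of the $g_{j}$ tend to be), the single sequence $\{F^{-n}g_{i}F^{n}\}_{n}$ already works, the gradient constant depending on $i$ but — harmlessly — not on $n$. If instead $F^{-n}g_{i}F^{n}$ blows up for every remaining $i$, I would use a diagonal sequence $h_{i}=F^{-n(i)}g_{i}F^{n(i)}$ with $n(i)\to\infty$ so slowly that $\|\Lambda^{-n(i)}\|\,\|\tilde g_{i}-{\rm id}\|_{\widetilde B}\to 0$; the dominant surviving monomial then has degree $\le D_{0}$ (degree $0$ when $g_{i}(p)\ne p$, degree $1$ when the obstruction sits in $Dg_{i}(p)$), so one gets a polynomial vector field of bounded degree, trivially norm-comparable. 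The only alternative left is that $F^{-n}g_{i}F^{n}$ converges in $C^{\omega}$ to a nontrivial diffeomorphism $L_{i}$ commuting with $\Lambda$; non-resonance forces the centraliser of $\Lambda$ among germs to be linear, so the $L_{i}$ lie in a fixed finite-dimensional Lie group, $\{L_{i}\}$ accumulates at the identity, and the Lie algebra of its closure yields a nonzero linear vector field in $\overline G$. In every case Proposition~\ref{vectorfield1star} (or, in the last case, the Lie-group argument) delivers the sought-for $X$.

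The main obstacle is precisely that the obvious renormalisation need not converge to the identity: the translation $g_{i}(p)-p$ and any ``fast-into-slow'' component of $Dg_{i}(p)$ get amplified by $\|\Lambda^{-n}\|$. Taming this is what forces the trichotomy above, and it is here that the two structural ingredients enter — the bound $D_{0}$ on the degree of the amplified part (so that Lemma~\ref{vectorfield4} applies to a \emph{compact} family of polynomials) and the non-resonance of $\Lambda$ (so that the degenerate, genuinely ``linear'' situation is controlled by a finite-dimensional Lie group). Coordinating the choice of $w$, the growth of $n(i)$, and the truncation degree so that holomorphy on a fixed domain, convergence to the identity, a gradient estimate uniform in the running index, and non-triviality of the limit hold simultaneously is the technical heart of the proof; the argument runs parallel to the $C^{\infty}$ proof of Proposition~\ref{vectorfield3}.
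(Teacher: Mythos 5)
Your proposal is correct and follows essentially the same route as the paper: linearize $F$ at the non-resonant contracting fixed point, renormalize the $g_i$ by conjugation with powers of $F$ (so that higher-order Taylor terms decay faster than the bounded-degree part), use Lemma~\ref{vectorfield4} for the norm comparison between the complex neighborhood and an interior set, and conclude via Proposition~\ref{vectorfield1star} with Montel's theorem. The paper's own proof is just a brief statement that the scheme of Proposition~\ref{vectorfield3} adapts (and is simpler in the analytic setting); your case trichotomy mirrors that smooth-case argument, and even your last case (a nontrivial limit commuting with $\Lambda$) can be folded back into the same scheme by a diagonal sequence $F^{-n_i}g_iF^{n_i}$ rather than a separate Lie-group argument.
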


\begin{obs}
\label{poincaredulac}
{\rm Before approaching the proofs of these propositions, let us point out that the assumption
concerning the non-resonant character of the contracting fixed point of $F$ is not indispensable
for the corresponding statements to hold. To substantiate this claim, observe that
a contracting fixed point can have only finitely many resonance modes. Thus, as long as $F$ is smooth
(resp. analytic), $F$ is smoothly (analytically) conjugate
to a polynomial normal form, called Poincar\'e-Dulac normal form, cf. \cite{EMS1}. The use of this
Poincar\'e-Dulac normal form enables one to check that the fundamental estimates used below,
namely Estimate~(\ref{controllingtail}) and Estimate~(\ref{justabove}), still hold in the corresponding context.
In other words, the assumption about non-resonance is only intended to simplify the calculations since
$F$ becomes a diagonal matrix in appropriate coordinates.}
\end{obs}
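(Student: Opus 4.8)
The plan is to reduce the resonant situation to the diagonal one already treated by passing to the Poincar\'e--Dulac normal form of $F$, and then to verify that the two estimates driving the argument survive this change unchanged. First I would record the relevant structure. Since $p$ is a contracting fixed point of the invertible diffeomorphism $F$, the eigenvalues $\lambda_1,\dots,\lambda_n$ of $D_pF$ are nonzero and satisfy $\Lambda:=\max_j|\lambda_j|<1$; writing $\lambda^m=\lambda_1^{m_1}\cdots\lambda_n^{m_n}$ for $m\in\N^n$, a resonance $\lambda^m=\lambda_j$ with $|m|\geq 2$ forces $\Lambda^{|m|}\geq|\lambda^m|=|\lambda_j|\geq\delta:=\min_j|\lambda_j|$, hence $|m|\leq\log\delta/\log\Lambda$. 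So there are only finitely many resonances, and there is $d_0\in\N$ such that $F$ is conjugate --- smoothly if $F$ is smooth, analytically if $F$ is analytic (Poincar\'e--Dulac in the contracting case, cf.\ \cite{EMS1}) --- to a polynomial map $N$ of degree at most $d_0$ of the form $N(x)=Ax+R(x)$ with $A=\mathrm{diag}(\lambda_1,\dots,\lambda_n)$ and $R$ a polynomial all of whose components are resonant monomials. Since this conjugacy has the same regularity as $F$, since the constructions in Propositions~\ref{vectorfield3} and \ref{vectorfield3star} are purely local around $p$, and since pull-backs preserve the closure of $G$ (as already observed), there is no loss in assuming $F=N$ from the outset.

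Next I would isolate the single place where diagonality of $F$ was used, namely the control of the iterates $F^{\pm n}$ on a small ball $U$ around $p$: in the diagonal case $\|A^{n}\|=\Lambda^{n}$ and $\|A^{-n}\|=\delta^{-n}$, and it is these geometric rates that feed Estimate~(\ref{justabove}) (the comparison of $C^{r}$- and $C^{0}$-norms of the rescaled conjugates $g_i^{(n)}:=N^{-n}\circ g_i\circ N^{n}$, which is exactly what Proposition~\ref{vectorfield1}/\ref{vectorfield1star} requires) and Estimate~(\ref{controllingtail}) (the control of the higher-order remainder of $g_i^{(n)}$). For $F=N$ I would prove by induction on $n$ the substitute identities
\[
N^{n}(x)=A^{n}\bigl(x+P_{n}(x)\bigr),\qquad N^{-n}(y)=A^{-n}\bigl(y+Q_{n}(y)\bigr),
\]
where $P_{n},Q_{n}$ are polynomials of degree at most $d_0$ whose coefficients grow at most polynomially in $n$; the key point is that a resonant monomial $x^m e_j$ conjugated by the linear part picks up exactly the neutral factor $\lambda^m/\lambda_j=1$ at each step, so iterating $N$ produces only binomial-type dependence on $n$ and no new geometric factor. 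Substituting these identities into the computations behind (\ref{justabove}) and (\ref{controllingtail}) --- carried out on the ball $N^{n}(U)$, which has radius of order $\Lambda^{n}$ --- the geometric rates $\Lambda^{n}$ and $\delta^{-n}$ are reproduced verbatim, the corrections $P_n,Q_n$ contribute only a factor $\mathrm{poly}(n)$, and for the bounded-degree polynomial pieces the required $C^{r}$-versus-$C^{0}$ comparison is precisely Lemma~\ref{vectorfield4}. Both estimates therefore hold in the resonant case, and the proofs of Propositions~\ref{vectorfield3} and \ref{vectorfield3star} go through word for word, yielding a non-identically zero vector field of the same regularity as $F$ in the appropriate closure of $G$.

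The step I expect to be the real obstacle is the uniformity buried in the factor $\mathrm{poly}(n)$: in the main argument the exponent is chosen as a definite function $n=n(i)\to\infty$ (so that the $C^{0}$-size of $g_{i}^{(n(i))}-\mathrm{id}$ stays pinched between two fixed positive constants), and one must make sure that the polynomial-in-$n(i)$ corrections do not spoil the uniform constant required in Proposition~\ref{vectorfield1}/\ref{vectorfield1star}. The resolution I would pursue is that the quantities being compared in (\ref{justabove}) and (\ref{controllingtail}) are invariant under the rescaling $x\mapsto A^{\pm n}x$, so any prefactor produced by $P_{n}$ or $Q_{n}$ occurs on both sides of the inequality and cancels, leaving a constant depending only on $d_0$, on $n$-independent norms of $R$, on the ratios $\Lambda,\delta$ and on $r$ --- exactly as when $F$ is diagonal. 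Should this bookkeeping prove delicate, I would fall back on the fact that in the contracting regime $N$ is the time-one map of a polynomial vector field whose linear part is $\log A$ and which commutes with $\log A$, so that conjugation by $N^{-n}$ is an honest flow and the estimates can be phrased directly in terms of it; but I expect the direct route to suffice.
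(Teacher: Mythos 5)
Your proposal is correct and follows essentially the same route the paper's remark sketches: finitely many resonances, passage to the smooth/analytic Poincar\'e--Dulac polynomial normal form, and verification that Estimates~(\ref{controllingtail}) and~(\ref{justabove}) persist because the iterates of the normal form differ from the linear part only by resonant corrections of uniformly bounded degree (so Lemma~\ref{vectorfield4} still applies) whose coefficients grow at most polynomially in the iterate. One small caveat: the reason the polynomial-in-$k$ factors are harmless is not the ``cancellation on both sides'' you invoke but the strict geometric gap built into the choice $r>N$ (polynomial growth beaten by $(\vert \lambda_n\vert^{r+1}/\vert\lambda_1\vert)^k$), though this does not affect the correctness of your argument.
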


To prove Proposition~\ref{vectorfield3}, the central point is to construct a sequence $\{h_i \} \subset G$
fulfilling the conditions of Proposition~\ref{vectorfield1}, for every {\it a priori}\, fixed~$r \in \N$. Similarly
the sequence $\{h_i \} \subset G$ is supposed to satisfy the conditions of Proposition~\ref{vectorfield1star}
in the case of Proposition~\ref{vectorfield3star}. Let us first work out the proof of Proposition~\ref{vectorfield3}.

Let $U$ be identified to the unit ball of $\R^n$ and let $p$ be identified to the origin.
Consider the sequence $\{ g_{i,U} \} \subset G$, $g_{i,U} \neq {\rm id}$ for every~$i \in \N$,
converging $C^{\infty}$ to the identity and obtained as restrictions of elements in $G$.
To abridge notations, this sequence will simply be denoted by $\{ g_i\}$
since no misunderstanding is possible.
Denote by $P_i^r (x)$ the Taylor polynomial $P_i^r (x) = g_i (0) + g^{(1)}_i (0) x +
\cdots + g_i^{(r)} (0) x^r / r ! \,$. We have
\begin{equation}
\Vert g_i (x) - P_i^r (x) \Vert  \leq \frac{1}{(r+1)!}
\sup_{x \in U} \Vert g_i^{(r+1)} (x) \Vert \, . \,  \Vert x \Vert^{r+1} \, . \label{Taylor}
\end{equation}
Since $\{ g_i \}$ converges $C^{\infty}$ to the identity, it follows that
$\sup_{x \in U} \Vert g_i^{(k)} (x) \Vert$
converges to {\it zero} for $k \neq 1$ while, for $k =1$, $\sup_{x \in U} \Vert g_i^{(1)} (x)\Vert$ converges
to the identity. If the sequence $\{ g_i \}$ satisfies the assumptions of Proposition~\ref{vectorfield1}, then there is
a non-identically zero vector field in the $C^{r-1}$-closure of $G$, though it will not be clear that $X$ can be
chosen as a real analytic vector field. In general, we proceed as follows.

\begin{proof}[Proof of Proposition~\ref{vectorfield3}]
Recall that $U$ is identified to a neighborhood of the origin in $\R^n$. Modulo working over $\C$,
$F$ becomes identified to the homothety $(x_1, \ldots ,x_n) \longmapsto
(\lambda_1 x_1 , \ldots ,\lambda_n x_n)$ where the eigenvalues $\lambda_i$ of $D_0F$
verify $\sup_{i=1, \ldots ,n} \vert \lambda_i \vert <1$ (alternatively it is possible to work with
the ``block-diagonal'' form of $D_0F$ involving both real and purely imaginary parts of the eigenvalues).

The next step consists of constructing a sequence $\{ h_i \} \subset G$ verifying the assumptions of
Proposition~\ref{vectorfield1}. The sequence $\{ h_i \}$ will be constructed by means of the sequence
$\{ g_i \}$ by setting $h_i = F^{-k(i)} \circ g_i \circ F^{k(i)}$ for a suitable sequence $\{ k(i) \}
\subset \N$ to be defined below. To prove that a suitable choice of integers $k(i)$ leads to
a sequence $h_i$ verifying the indicated conditions, let us first permute the coordinates so as to have
$\vert \lambda_1 \vert < \cdots < \vert \lambda_n \vert < 1$. In particular, the subset of $\R$ consisting of the absolute values
of all possible combinations having the form $\lambda_1^{\alpha_1}
\ldots \lambda_n^{\alpha_n}/\lambda_l$, $\alpha_1, \ldots ,\alpha_n \in \N$ and $l \in
\{ 1, \ldots ,n\}$ admits a supremum. Besides this supremum
is attained by finitely many elements.

Choose $N$ large enough to have
$\vert \lambda_n \vert^N < \vert \lambda_1 \vert$. Without loss of generality,
we can assume that $r > N$. Finally, setting $\Lambda^s.x = (\lambda_1^s x_1, \ldots ,\lambda_n^s x_n)$, it follows that
$$
h_{i,k}  =  [D_0 F]^{-k} . g_i \circ F^{k}
=  [D_0 F]^{-k}.[\sum_{j=0}^r g^{(j)}_i (0) \Lambda^{k r} .x + R_{i,k } (x)]
$$
for every $k \in \N$, where
\begin{equation}
[D_0 F]^{-k}. R_{i,k} (x) \leq \frac{1}{(r+1)!}[D_0 F]^{-k}
\sup_{x \in U} \Vert g_i^{(r+1)} (x) \Vert . \Vert \Lambda^{k(r+1)} .x
\Vert < {\rm Const} \sup_{x \in U} \Vert g_i^{(r+1)} (x) \Vert \, , \label{forTaylorremainder}
\end{equation}
for a uniform constant ${\rm Const}$ (recall that $\vert \lambda_n \vert^N < \vert \lambda_1 \vert$).
Now, note that $\sup_{x \in U} \Vert g_i^{(r+1)} (x) \Vert$ converges to zero with $i$ since $g_i$
converges in the $C^{\infty}$-topology to the identity. By using, for example, the integral representation
for the Taylor remainder term, it is straightforward to obtain similar estimates for derivatives so as to ensure
that the $C^r$-norm of $[D_0 F]^{-k}. R_{i,k} (x)$ on $U$ converges to zero with~$i$ and
independently of $k$. Fix then a decreasing sequence $\{ \delta_i \}$ converging to $0$ such that
\begin{equation}
\Vert [D_0 F]^{-k}. R_{i,k} (x) \Vert_{r, U} < \delta_i \label{controllingtail}
\end{equation}
for every $i, k \in \N$. Next, consider the sequence of polynomial maps $P_{i,k}$ given by $P_{i,k} = [D_0 F]^{-k}.
[\sum_{j=0}^r g^{(j)}_i (0) \Lambda^{k r} .x]$. These polynomials are elements of ${\rm Pol}\, (r,n)$ converging to
the identity map so that Lemma~\ref{vectorfield4} yields a constant $C$ verifying
\begin{equation}
\Vert P_{i,k} -{\rm id} \Vert_{r, U} < C \Vert P_{i,k} -{\rm id} \Vert_{0, U} \, . \label{justabove}
\end{equation}

For the convenience of the reader, the argument will now be split into two cases.

\noindent {\it Case 1}. Modulo passing to a subsequence, suppose that $g_i (0) \neq 0$ for every~$i$.

In this case, for every $i$ fixed, there is a smallest positive integer
$K_i$ such that
\begin{equation}
\sup_{x \in U} \Vert [D_0 F]^{-k(i)} .[\sum_{j=0}^r g^{(j)}_i (0) \Lambda^{k(i)r} .x -{\rm id}] \Vert > \max\{ 10 \delta_i ,
10 C \delta_i \}
\label{controllingpolynomial1}
\end{equation}
where $C$ is the constant involved in Estimate~(\ref{justabove}).
The existence of $K_i$ follows easily from the fact that $\Vert g_i (0)-0 \Vert >0$ so that
$\lim_{k\rightarrow \infty} \Vert [D_0 F]^{-k} .(g_i(0)-0) \Vert = \infty$.
Now set $k (i) =K_i$ so as to define $h_i = h_{i,k(i)} = h_{i,K_i}$. We need to check that the
resulting sequence $h_i = F^{-k(i)} \circ g_i \circ F^{k(i)}$ satisfies the assumptions of
Proposition~\ref{vectorfield1}. The first step is to show that this sequence converges to the identity
(on $U$) in the $C^{\infty}$-topology. This goes as follows. Recall that
$$
h_i = [D_0 F]^{-k (i)} . g_i \circ F^{k (i)}
= [D_0 F]^{-k (i)}.[\sum_{j=0}^r g^{(j)}_i (0) \Lambda^{k (i) r} .x + R_{i,k(i) } (x)] \, .
$$
The function $[D_0 F]^{-k (i)}.R_{i,k(i)}$ clearly converges in the $C^{\infty}$-topology to the identically zero function
as it follows from using, for example, the integral form of Taylor remainder term (the $C^0$-norm was explicitly
estimated in Formula~(\ref{forTaylorremainder})). Thus we only need to show that
the truncation $J^r h_i$ given by
$$
J^rh_i= [D_0 F]^{-k(i)} .[\sum_{j=0}^r g^{(j)}_i (0) \Lambda^{k(i)r} .x]
$$
converges to the identity in the $C^{\infty}$-topology. For this recall that $k (i)$ was defined
as the smallest positive integer verifying Estimate~(\ref{controllingpolynomial1}). In turn, this implies
that
$$
\sup_{x \in U} \Vert [D_0 F]^{-k(i)-1} .[\sum_{j=0}^r g^{(j)}_i (0) \Lambda^{(k(i)-1)r} .x - {\rm id}] \Vert \leq
\max\{ 10 \delta_i , 10 C \delta_i \}  \, .
$$
In particular, $\sup_{x \in U} \Vert J^rh_i -{\rm id} \Vert$ is bounded by $\lambda_1^{-1} \max\{ 10 \delta_i , 10 C \delta_i \}$.
Since $\delta_i \rightarrow 0$, it follows that $\{ \Vert J^rh_i -{\rm id}\Vert_{0,r} \}$ converges to zero as well. By virtue of
Lemma~\ref{vectorfield4}, we conclude that $\{ \Vert J^rh_i -{\rm id}\Vert_{0,r}\}$ converges
to zero in the $C^k$-topology for every $k$ what is equivalent to saying that it converges to zero in the
$C^{\infty}$-topology.

Finally to ensure that the second condition of Proposition~\ref{vectorfield1} is also verified, we proceed as
follows. First note that we have
\begin{eqnarray}
\nonumber \Vert h_i -{\rm id} \Vert_{0,U} & = & \Vert J^rh_i - {\rm id} + [D_0 F]^{-k (i)}.R_{i,k(i) } (x) \Vert_{0,U} \\ \nonumber
& \geq & \Vert J^rh_i - {\rm id} \Vert_{0,U} - \Vert [D_0 F]^{-k (i)}.R_{i,k(i) } (x) \Vert_{0,U} \\
& \geq & \max\{ 9 \delta_i , 9 C \delta_i \} \, , \label{estimateoneside}
\end{eqnarray}
where estimates~(\ref{controllingpolynomial1}) and~(\ref{controllingtail}) were used.

On the other hand, we also have
\begin{eqnarray}
\nonumber \Vert h_i -{\rm id} \Vert_{r,U} & \leq & \Vert J^rh_i - {\rm id} \Vert_{r,U} + \Vert [D_0 F]^{-k (i)}.R_{i,k(i) } (x) \Vert_{r,U}
\\ \nonumber
& \leq & C \Vert J^rh_i - {\rm id} \Vert_{0,U} +\delta_i \\
& \leq &  C \max\{ 10 \delta_i , C \delta_i \} +\delta_i \, , \label{estimateotherside}
\end{eqnarray}
where estimates~(\ref{controllingpolynomial1}) and~(\ref{justabove}) were used. The statement now follows from
comparing Estimates~(\ref{estimateoneside}) and~(\ref{estimateotherside}).

Summarizing, it was shown above the existence of a sequence $\{ h_i \} \subset G$ satisfying the conditions
of Proposition~\ref{vectorfield1}. Therefore there exists a non-identically
zero vector field $X$ of class $C^{r-1}$ lying in the $C^{r-1}$-closure of $G$. It remains to show that, in fact, $X$
can be chosen smooth/analytic according to the regularity of $F$. However, to check this assertion, it suffices
to revisit the sequence $\{ h_i \}$ constructed above. Indeed, it follows from the previous construction that
$h_i$ can be written as $h_i = \overline{h}_i + R_i$ where $\overline{h}_i, \, R_i$ satisfy the following conditions.
\begin{itemize}
  \item $\overline{h}_i$ is a polynomial whose degree is uniformly bounded (independently of $i$). Moreover all the
  coefficients of $\overline{h}_i -{\rm id}$ have absolute value bounded by $\delta >0$.
  \item At least one of the coefficients of $\overline{h}_i -{\rm id}$ has absolute value exactly equal to~$\delta$.
  \item The sequence of quotients defined by $\Vert R_i \Vert_{r,U} / \Vert \overline{h}_i \Vert_{0,U}$ converges to zero.
\end{itemize}
Now recall that $X$ is the $C^{r-1}$-limit (and hence the $C^0$-limit)
of the sequence of vector fields $X_i$ defined in Equation~(\ref{thefield}).
However, the fact that the quotients $\Vert R_i \Vert_{r,U} / \Vert \overline{h}_i \Vert_{0,U}$ converge to zero implies that
the sequence $X_i$ has the same $C^{0}$-limit as the sequence $Y_i$ given by
$$
Y_i (x) = \frac{1}{\Vert \overline{h}_i -{\rm id} \Vert_{0, U}} \cdot (\overline{h}_i (x) -x) \, .
$$
It is clear that the $C^0$-limit of $\{ Y_i\}$ is a polynomial vector field and, the $C^0$-limit being unique,
it must coincide with $X$. Therefore $X$ is actually
polynomial in the coordinate where $F$ is linearizable. As already seen,
this coordinate is smooth/analytic depending on whether $F$ is smooth/analytic. The proposition then follows in Case~1.

Now we need to consider the case where $g_i (0) =0$ for every~$i$. Concerning the previous discussion,
the only additional difficulty this may pose lies in the fact that a (minimal) positive integer $K_i$ fulfilling
Estimate~(\ref{controllingpolynomial1}) may fail to exist. Indeed, it may even happen that the sequence
of maps $\{ F^{-k} \circ g_i \circ F^k \}_k$, indexed by $k$ and having~$i$ fixed, converges to the identity.
Here two different cases have to be considered.

\noindent {\it Case 2}. Suppose that $K_i$ as before do not exist (for all but finitely many~$i$). In
particular $g_i (0) =0$ for $i$ large enough.

Modulo discarding finitely many values of~$i$, we assume that $K_i$ do not exist for every~$i$. Suppose
first the existence of some~$i_0$ for which the sequence of maps $H_k= F^{-k} \circ g_i \circ F^k$ (indexed by~$k$)
converges to the identity. This case happens for example if there is $i_0$ such that $g_i$ is tangent to
the identity to an order $N$ satisfying $\vert \lambda_n \vert^N < \vert \lambda_1 \vert$.
Nonetheless, for $k$ large,
the value of $\Vert F^{-k} \circ g_{i_0} \circ F^k - {\rm id} \Vert_{r,U}$ is mostly concentrated over a
polynomial part of the Taylor series of $F^{-k} \circ g_{i_0} \circ F^k$. This polynomial part has degree
fixed (i.e. uniformly bounded on $k$) since it is determined solely by the Taylor series of $g_{i_0}$. Thus
Lemma~\ref{vectorfield4} can be employed to ensure that the sequence $H_k$ satisfies the conditions of
Proposition~\ref{vectorfield1} and hence there is a non-identically zero vector field in the $C^{r-1}$-closure
of $G$. To check that $X$ can again be chosen smooth/analytic according to the regularity of $F$, proceed as before
to conclude that each $H_k$ can be represented under the form $\overline{H}_k + R_k$ where the following holds:
\begin{itemize}
\item $\overline{H}_k$ is a polynomial of uniformly bounded degree.
\item The sequence of quotients defined by $\Vert R_i \Vert_{r,U} / \Vert \overline{H}_i \Vert_{0,U}$ converge to zero.
\end{itemize}
In view of this, the previous argument allows to conclude again that $X$ is polynomial in the coordinates where
$F$ is linearizable. The proposition follows in the present case.

Suppose now that for no value $i_0$ of $i$ the sequence (indexed by $k$) of maps
$F^{-k} \circ g_i \circ F^k$ converges to the identity. In particular, there is a uniform $N$ such that
no $g_i$ is tangent to the identity to order~$N$. Now, for~$i$ fixed, consider again the sequence
$h_{i,k} = F^{-k} \circ g_{i} \circ F^k$. As $k$ becomes large,
the value of $\Vert F^{-k} \circ g_{i} \circ F^k - {\rm id} \Vert_{r,U}$ is mostly concentrated over the
polynomial part of degree $N$ of the Taylor series of $F^{-k} \circ g_{i} \circ F^k$. Hence we can choose
a convenient sequence of pairs $(i, k(i))$ leading to maps $h_{i,k(i)}$ converging to the identity and such that
the value of $\Vert F^{-k (i)} \circ g_{i} \circ F^{k(i)} - {\rm id} \Vert_{r,U}$ becomes more and more concentrated
over the degree-$N$ Taylor polynomial of $F^{-k (i)} \circ g_{i} \circ F^{k(i)} - {\rm id}$. Now
the combination of Lemma~\ref{vectorfield4} and Proposition~\ref{vectorfield1} yields again the the existence of
$X$ in the $C^{r-1}$-closure of $G$. The argument used in the preceding cases can similarly be repeated to
show that $X$ is smooth/analytic depending only on the regularity of $F$. The proposition is proved.
\end{proof}

Let us close this section with the proof of Proposition~\ref{vectorfield3star}.

\begin{proof}[Proof of Proposition~\ref{vectorfield3star}]
The argument is analogous though much simpler than the one employed in the proof of Proposition~\ref{vectorfield3}.
In analytic coordinates, $F$ is again represented by the homothety $(x_1, \ldots ,x_n) \longmapsto
(\lambda_1 x_1 , \ldots ,\lambda_n x_n)$. Next, we consider maps $h_{i,k}$ of
the form $h_{i,k} = F^{-k} \circ g_i \circ F^{k}$. Besides, owing to ${\rm HC}$-assumption, each $g_i$
is represented by its Taylor series (based at the origin) on a fixed poly-disc about the origin. Since the
effect of the conjugation by $F^k$ decreases the terms of higher order of the Taylor series of $g_i$ faster
than those of lower orders, the previous scheme allows us to choose a sequence $k(i)$ such that
the corresponding maps $h_i = F^{-k (i)} \circ g_i \circ F^{k (i)}$ satisfy the conditions of
Proposition~\ref{vectorfield1star}. Thus there is a non-identically zero analytic vector field $X$ contained in
the $C^{\omega}$-closure of $G$. The proposition is established.
\end{proof}

\section{Quasi-invariant measures and proof of Theorem~A}

From now on, all groups of diffeomorphisms $G$ are supposed to consist of analytic diffeomorphisms,
i.e. $G \subset \diffM$. Furthermore, {\it every local vector field supposed to be contained in the closure of
$G$ is understood to be non-identically zero}. Unless otherwise stated, these vector fields are also
real analytic. On the other hand, the group
$G$ is supposed to be non-discrete but
only in the $C^{\infty}$-topology. In other words, we shall deal with groups
$G \subset \diffM$ satisfying conditions~(1), (2) and~(3) of Section~2.

This section consists of three paragraphs whose contents are as follows.
In the first paragraph, ideas used in Section~3 will further be developed to yield vector fields in the $C^{\infty}$-closure of
$G$. In the second paragraph, we shall extend to higher dimensions a proposition claiming that
the class of the Lebesgue measure is the only (class of) quasi-invariant measure
for the affine group of the line. A proof for the original proposition can be found in \cite{rebelo-PLMS} and, in
the mentioned paragraph, the corresponding argument
will significantly be generalized to higher dimensions.
Here it may be noted that the main difficulty involved in this generalization
is overcome by the use of a famous result due to Oxtobi and Ulam. Finally, in the last paragraph,
all the previous information will be brought together to prove Theorem~A and to deduce Corollaries~B and~C.

\subsection{Smooth vector fields in the closure of a group}

To begin with, let $F$ denote an analytic Morse-Smale diffeomorphism of $M$. By definition,
the non-wandering set $\Omega \, (F)$ of $F$ must be finite
and thus it must coincide with the set of periodic points ${\rm Per}\, (F)$ of $F$. Moreover, every periodic point $p$ of $F$
is hyperbolic with stable and unstable manifolds denoted respectively by $W^s (p)$ and $W^u (p)$. It is still
part of the definition of a Morse-Smale diffeomorphism that, for every pair of periodic points $p, \, q$
the stable manifold $W^s (p)$ of $p$ and the unstable manifold $W^u (q)$ of $q$ are in general position.
Modulo passing to a finite power of
$F$, the diffeomorphism $F$ possesses both {\it contracting and expanding}\, fixed points.
Note that an expanding fixed point for $F$ is contracting for $F^{-1}$ and
conversely. Let $p_1, \ldots ,p_l$ (resp. $q_1, \ldots ,q_s$) be the contracting fixed points of $F$
(resp. $F^{-1}$). Denote by ${\rm Bas}_{F} (p_i)$ (resp. ${\rm Bas}_{F^{-1}} (q_j)$) the {\it basin
of attraction}\, of $p_i$ (resp. $q_j$), namely the set of points $x \in M$ such that
$\lim_{k \rightarrow \infty} F^k (x) =p_i$ (resp. $\lim_{k \rightarrow \infty} F^{-k} (x) =q_j$).
Now we have the following:

\begin{lema}
\label{vectorfield5}
Suppose that $G \subset \diffM$ satisfies conditions~(1), (2) and~(3) of Section~2. Then the $G$-orbit of
every point $p$ accumulates at one of the points $p_1, \ldots ,p_l, q_1, \ldots ,q_s$.
\end{lema}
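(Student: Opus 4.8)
The plan is to exploit condition~(3), which forbids proper invariant analytic subsets, together with the Morse--Smale dynamics of $F$. First I would observe that the closed set
$$
A \;=\; \overline{{\rm Bas}_F(p_1) \cup \cdots \cup {\rm Bas}_F(p_l) \cup {\rm Bas}_{F^{-1}}(q_1) \cup \cdots \cup {\rm Bas}_{F^{-1}}(q_s)}
$$
is in fact all of $M$: every point $x \in M$ is either nonwandering under $F$ — hence one of the finitely many periodic points, and so on the stable manifold of some $p_i$ — or else its forward and backward orbits under $F$ each accumulate on $\Omega(F) = {\rm Per}(F)$, and by the Morse--Smale structure (the $\lambda$-lemma / absence of cycles) the $\omega$-limit of $x$ is a single periodic point whose stable manifold contains $x$; iterating along the finite partial order on periodic points puts $x$ in the stable manifold of some contracting $p_i$. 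Thus $M = \bigsqcup_i W^s(p_i) = \bigsqcup_i {\rm Bas}_F(p_i)$, and dually for $F^{-1}$.

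Next I would argue that the set $P$ of points of $M$ whose $G$-orbit accumulates at one of $p_1,\dots,p_l,q_1,\dots,q_s$ is both $G$-invariant and large. Invariance is immediate: if the $G$-orbit of $p$ accumulates at $p_i$, then so does the $G$-orbit of $gp$ for any $g \in G$, since $Gp$ and $G(gp)$ are the same set. To see that $P$ contains a set on which condition~(3) can bite, note first that each fixed point $p_i$ itself lies in $P$ trivially (take $g = {\rm id}$), and more importantly that $P$ contains the full basins: if $x \in {\rm Bas}_F(p_i)$ then $F^k(x) \to p_i$ with $F^k \in G$, so $x \in P$. By the first paragraph this already gives $P = M$ via the forward dynamics alone — so in fact the statement is a direct consequence of the Morse--Smale structure of $F \in G$, and conditions~(1) and~(2) are what guarantee $F$ has the required contracting fixed points after passing to a power; condition~(3) is not even needed for this particular lemma, only for the finer statements that build on it.

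The one point that needs care — and which I expect to be the only genuine obstacle — is the reduction "modulo passing to a finite power of $F$": one must check that replacing $F$ by $F^N$ does not change the conclusion, and that $F^N$ (being still Morse--Smale, with the same nonwandering set and with periodic orbits of $F$ becoming fixed) genuinely has both contracting and expanding \emph{fixed} points. This is standard: a Morse--Smale $F$ has a periodic sink and a periodic source (the minimal and maximal elements of the partial order on $\Omega(F)$), and raising to the lcm of the relevant periods turns these into fixed points while the basins are unchanged as subsets of $M$; moreover if $Gp$ accumulates at a point using powers of $F^N$, it does so using powers of $F$, so working with $F^N \in G$ loses nothing. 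After this bookkeeping the proof is complete: every $x \in M$ lies in ${\rm Bas}_F(p_i)$ for some contracting fixed point $p_i$ of (a power of) $F$, and $F^k(x) \to p_i$ exhibits the desired accumulation of the $G$-orbit of $x$.
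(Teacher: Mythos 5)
There is a genuine error at the heart of your argument: the decomposition $M = \bigsqcup_i W^s(p_i) = \bigsqcup_i {\rm Bas}_F(p_i)$ over the \emph{contracting} fixed points is false in general. What the Morse--Smale structure gives is $M = \bigsqcup_{P \in {\rm Per}(F)} W^s(P)$ over \emph{all} periodic points, including saddles and sources. If $x$ lies on the stable manifold of a saddle $P$, its $\omega$-limit is $P$ itself, not a sink; the "finite partial order / no cycles" argument shows that chains of connecting orbits terminate at sinks, but it does not move $x$ into the basin of a sink. Consequently the set of points whose forward $F$-orbit tends to a saddle and whose backward $F$-orbit also tends to a saddle, namely $\bigcup W^u(P)\cap W^s(Q)$ over saddle pairs $P,Q$, is in general nonempty, and for such points the cyclic group generated by $F$ alone never accumulates at any $p_i$ or $q_j$. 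So your conclusion that "condition~(3) is not even needed for this particular lemma" and that the forward dynamics of $F$ suffices cannot be right.

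This is precisely where the paper's proof does real work. It sets $W = \bigcup_i {\rm Bas}_F(p_i) \cup \bigcup_j {\rm Bas}_{F^{-1}}(q_j)$, observes that $W$ is open, and identifies $M \setminus W$ with the union of the transverse intersections $W^u(P)\cap W^s(Q)$ over saddle-type periodic points. Because $F$ is real analytic, these invariant manifolds are properly embedded analytic submanifolds, so $M \setminus W$ is a \emph{proper analytic subset} of $M$. Condition~(3) (no proper $G$-invariant analytic subset) is then applied iteratively to the finitely many irreducible components of $M\setminus W$: one finds $g \in G$ moving a component off $M\setminus W$, the residual intersections have strictly smaller dimension, and after finitely many steps every $G$-orbit is shown to meet $W$; once in a basin, iterating $F^{\pm 1}$ (which lie in $G$) produces the desired accumulation at some $p_i$ or $q_j$. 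Your bookkeeping about passing to a power of $F$ is fine and agrees with the paper's setup, but the main content of the lemma is exactly the treatment of $M\setminus W$ that your proposal skips.
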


\begin{proof}
Consider the union
$$
W = \bigcup_{i=1}^l {\rm Bas}_{F} (p_i) \; \cup \; \bigcup_{j=1}^s  {\rm Bas}_{F^{-1}} (q_j) \, .
$$
The set $W \subset M$ is clearly open. To prove the claim, it suffices to check that the $G$-orbit of every
point in $M$ intersects $W$. For this, note that the
complement $M \setminus W$ of $W$ in $M$ consists of those points $x \in M$
for which there are fixed points $P$ and $Q$ of $F$ satisfying the following conditions.
\begin{itemize}
  \item $P$ and $Q$ are (necessarily hyperbolic) fixed points of $F$ having saddle type (i.e. the corresponding
  differential have eigenvalues of absolute value less than~$1$ and eigenvalues of absolute value greater than~$1$).
  \item The point $x$ belongs to the intersection of $W^u (P) \cap W^s (Q)$.
\end{itemize}
Since $F$ is real analytic, the stable (resp. unstable) manifolds of all its periodic points are analytic as well.
Thus these invariant manifolds are analytic sub-manifolds of $M$. They are, besides, properly embedded
since the $\omega$-limit (resp. $\alpha$-limit) of every point in $M$ must be contained in ${\rm Per}\, (F)$.
Thus, for every pair of periodic points $P, Q$, the transverse intersection $W^u (P) \cap W^s (Q)$
is a proper analytic subset of $M$. In other words, the complement $M \setminus W$ of $W$ in $U$ is a proper
analytic subset of $M$.

To complete the proof of the lemma, we proceed as follows. Consider an irreducible component $A_1$ of $M
\setminus W$. According to condition~(3) in Section~2, there exists $g \in G$
such that $g (A_1)$ is not contained in $M \setminus W$. We then consider the intersection
$g (A_1) \cap (M \setminus W)$ and, if not empty, it yields analytic sets with dimension strictly less than
the dimension of $A_1$. Since the number of irreducible components of $M \setminus W$ is finite, we can repeatedly
use condition~(3) until be able to ensure that the corresponding intersections are empty. The
proof of the lemma is over.
\end{proof}

Since the group $G$ is non-discrete in
the $C^{\infty}$-topology, it would be convenient to have vector fields in the $C^{\infty}$-closure of
$G$ whereas Proposition~\ref{vectorfield3} produces analytic vector fields that are only in the $C^r$-closure of $G$.
Though $r$ can a priori be fixed, the vector field in question depends on this fixed $r$ so that we cannot
immediately derive the existence of a non-trivial $C^{\infty}$-closure for $G$ as desired. Note that working
with the $C^{\infty}$-closure of $G$ is needed to coherently define a (pseudo) Lie algebra. Indeed,
the commutator of two vector fields in the $C^r$-closure of $G$ is, in principle, only in the $C^{r-1}$-closure
of $G$. This gap in the statement of Proposition~\ref{vectorfield3} is however bridged by the theorem below.

\begin{teo}
\label{goodvectorfields}
Assume that $G \subset \diffM$ satisfies conditions~(1), (2) and~(3) of Section~2. Then there exists
a finite open covering $\{ U_k\}_{k=1,\ldots ,l}$ of $M$ such that every $U_k$ is endowed with a nowhere
zero analytic vector field $X_k$ contained in the $C^{\infty}$-closure of $G$.
\end{teo}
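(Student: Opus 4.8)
The plan is to produce the required vector fields on neighbourhoods of the contracting and expanding fixed points of $F$ and then to spread them over all of $M$ by means of the $G$-action. After replacing $F$ by a suitable power (still an element of $G$), let $p_1,\dots ,p_l$ be its contracting fixed points and $q_1,\dots ,q_s$ its expanding ones; by condition~(2) every one of these points is non-resonant. I will prove the following local statement: near each such point, say $p=p_i$, there is a ball $B_p$ (around a suitable point of the linearizing neighbourhood of $p$) carrying a nowhere vanishing analytic vector field that lies in the $C^{\infty}$-closure of $G$ relative to $B_p$; and likewise near each $q_j$, with $F^{-1}\in G$ in place of $F$ (its contracting fixed points are precisely the $q_j$, and they are again non-resonant since non-resonance is insensitive to passing to the inverse). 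Granted this, the theorem follows at once: since $G$ acts on its pseudo-Lie algebra by pull-backs, for every $f\in G$ the vector field $f^{\ast}X$ is a nowhere vanishing analytic vector field on $f^{-1}(B_p)$ lying in the $C^{\infty}$-closure of $G$ relative to $f^{-1}(B_p)$; and by Lemma~\ref{vectorfield5} the $G$-orbit of every point of $M$ accumulates at one of $p_1,\dots ,p_l,q_1,\dots ,q_s$, so the open sets $f^{-1}(B_{p_i})$ and $f^{-1}(B_{q_j})$, $f\in G$, cover the compact manifold $M$. A finite subcover yields the desired $\{U_k\}$, each equipped with the corresponding pulled-back vector field.

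To prove the local statement near a contracting fixed point $p$, fix once and for all a ball $W_p$ on which $F$ is analytically linear (Poincar\'e linearization; $W_p$ does not depend on $r$). By condition~(1), $G$ contains a sequence $h_i\neq{\rm id}$ converging to the identity in the $C^{\infty}$-topology; since the $h_i$ are analytic and $M$ is connected, ${\rm Fix}(h_i)$ has empty interior, so the restrictions $h_i|_{W_p}$ are non-trivial and still converge to the identity in $C^{\infty}$, i.e. condition~(SC) holds. Hence, for every $r\in\N$, Proposition~\ref{vectorfield3} yields a non-identically zero analytic vector field $X^{(r)}$ on $W_p$ contained in the $C^{r-1}$-closure of $G$ relative to $W_p$, and its proof shows in addition that $X^{(r)}$ is, in the linear coordinate for $F$, a polynomial vector field with $\Vert X^{(r)}\Vert_{0,W_p}=1$. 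The crucial point ---which must be extracted from the construction carried out in the proof of Proposition~\ref{vectorfield3}--- is that the degree of $X^{(r)}$ is bounded by a constant $D$ \emph{independent of $r$}. Indeed, $X^{(r)}$ is a $C^{0}$-limit of rescaled conjugates $F^{-k(i)}\circ h_i\circ F^{k(i)}$, and conjugation by $F^{k}$ multiplies the coefficient of a monomial $x^{\alpha}$ in the $m$-th coordinate by $\lambda^{k\alpha}\lambda_m^{-k}$; since $|\lambda_j|<1$ for all $j$, choosing $N$ with $|\lambda_n|^{N}<|\lambda_1|$ (exactly as in the proof of Proposition~\ref{vectorfield3}) makes every monomial of degree $\geq N$ strictly contracted, hence invisible in the normalized limit, while the surviving lower-degree part has degree at most $\max\{N,D_0\}$, where $D_0$ is governed solely by the order of tangency to the identity of the $h_i$ (and $D_0=0$ whenever $h_i(p)\neq p$ for infinitely many $i$). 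If $p$ happens to be resonant, one first conjugates $F$ to its Poincar\'e--Dulac normal form and invokes Remark~\ref{poincaredulac}.

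Let $\calP_{D}$ denote the finite-dimensional vector space of polynomial vector fields on $\R^{n}$ of degree at most $D$. The $X^{(r)}$ lie in $\calP_{D}$ and satisfy $\Vert X^{(r)}\Vert_{0,W_p}=1$, so a subsequence $X^{(r_j)}$ converges in $\calP_{D}$ ---hence in the $C^{k}$-topology on $\overline{W_p}$ for every $k$, these norms being all equivalent on the finite-dimensional space $\calP_D$--- to an analytic vector field $X_{\infty}$ with $\Vert X_{\infty}\Vert_{0,W_p}=1$, in particular $X_{\infty}\not\equiv 0$. I claim that $X_{\infty}$ lies in the $C^{\infty}$-closure of $G$ relative to $W_p$. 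Fix $t\in\R$, an open set $V\subset W_p$ with $\phi^{s}_{X_\infty}$ defined on $V$ for all $s\in[0,t]$, and $m\in\N$, $\varepsilon>0$. For $j$ large enough one has $r_j-1\geq m$ and, by continuous dependence of flows on the vector field in the $C^{\infty}$-topology, $\phi^{s}_{X^{(r_j)}}$ is defined on $V$ for $s\in[0,t]$ with $\Vert\phi^{t}_{X^{(r_j)}}-\phi^{t}_{X_\infty}\Vert_{m,V}<\varepsilon/2$; since $X^{(r_j)}$ belongs to the $C^{r_j-1}$-closure of $G$ and $r_j-1\geq m$, there is $g\in G$ with $\Vert g-\phi^{t}_{X^{(r_j)}}\Vert_{m,V}<\varepsilon/2$, whence $\Vert g-\phi^{t}_{X_\infty}\Vert_{m,V}<\varepsilon$. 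This proves the claim.

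Finally, being a nonzero polynomial vector field, $X_{\infty}$ vanishes only on a proper analytic subset of $W_p$; choosing $x_0\in W_p$ with $X_{\infty}(x_0)\neq 0$ and a ball $B_p\Subset W_p$ about $x_0$ on which $X_{\infty}$ is nowhere zero, we obtain in $X_{\infty}|_{B_p}$ a nowhere vanishing analytic vector field lying in the $C^{\infty}$-closure of $G$ relative to $B_p$, which is the local statement; the expanding fixed points $q_j$ are handled identically with $F^{-1}$ in place of $F$. The main obstacle in this scheme is the uniform-in-$r$ degree bound of the second paragraph: it is the only place where one genuinely has to reopen the proof of Proposition~\ref{vectorfield3} rather than use it as a black box, whereas the passage to the $C^{\infty}$-closure, the localization to a ball where the polynomial limit does not vanish, and the final covering argument are routine combinations of Proposition~\ref{vectorfield3}, Lemma~\ref{vectorfield5}, finite-dimensionality, and the compactness of $M$.
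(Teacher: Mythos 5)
Your overall scheme (for each $r$ use Proposition~\ref{vectorfield3} to get a polynomial field in the $C^{r-1}$-closure, normalize, use local compactness of bounded-degree polynomial fields, and observe that a $C^{\infty}$-limit of fields lying in ever higher $C^{r}$-closures lies in the $C^{\infty}$-closure) is exactly the paper's final step, and that part of your argument is fine. But the two points you treat lightly are precisely where the proof lives, and both have genuine gaps. First, the covering step fails as written: Lemma~\ref{vectorfield5} only says that every $G$-orbit accumulates at the fixed points $p_i,q_j$, i.e.\ meets every neighborhood \emph{of those points}; your ball $B_p$ is centered at an auxiliary point $x_0$ where the limit field happens not to vanish, and nothing forces orbits to enter $B_p$. (Orbits are not known to be dense under conditions (1)--(3) alone; density is only obtained later, using condition (4).) The paper closes this hole with condition (3): the set $\mathcal{S}$ of points at which \emph{every} analytic field in the closure vanishes is an analytic subset invariant under $G$ (pull-backs by elements of $G$ stay in the closure), hence empty --- this is the argument of Lemma~\ref{morevectorfield1}, applied with $r=\infty$ --- and only then does compactness give the finite cover by sets carrying nowhere-zero fields. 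You never invoke condition (3) beyond its hidden use inside Lemma~\ref{vectorfield5}, so your reduction to the local statement is incomplete.

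Second, the uniform-in-$r$ degree bound, which you correctly identify as the crux, is not justified by your argument and, as stated, is false: it is not true that monomials of degree $\geq N$ are ``invisible in the normalized limit''. When the displacements $h_i(p)-p$ are extremely small compared with some intermediate-degree coefficients (and a fortiori in the case $h_i(p)=p$), the renormalized conjugates are dominated by \emph{contracted} monomials, and the limit can consist entirely of such terms of degree much larger than $N$; in particular your claim that $D_0=0$ whenever $h_i(p)\neq p$ for infinitely many $i$ is wrong, and the bound $\max\{N,D_0\}$ cannot be read off the proof of Proposition~\ref{vectorfield3} as a black box plus inspection. The paper obtains the bound by a different mechanism: using condition (3) again (Lemma~\ref{morevectorfield1}) it first produces, for each $r$, an analytic field $Y_r$ in the $C^{r}$-closure with $Y_r(p)\neq 0$; the non-vanishing at $p$ forces the leading $F$-homogeneous component of $Y_r$ to have $F$-multiplier at least $1/\vert\lambda_n\vert>1$, and only finitely many monomial types have multiplier above any fixed positive bound, whence a degree bound independent of $r$ (Claim~1 in the paper). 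That leading component is then extracted, while staying in the $C^{r}$-closure, by renormalized pull-backs $(F^{k})^{\ast}Y_r/(m^{1}(F))^{k}$ (Claim~2, in the spirit of Lemma~\ref{grading}), and only afterwards does one let $r\to\infty$ as you do. So the missing ingredient in both gaps is the same: the interplay between condition (3), the $G$-action on the pseudo-Lie algebra, and the $F$-multiplier filtration, which your proposal replaces by an unsubstantiated property of the construction in Proposition~\ref{vectorfield3}.
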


\begin{obs}
{\rm If $G$ satisfies conditions~(1), (2) and~(3) of Section~2 but it is constituted by smooth
diffeomorphisms then the statement of Theorem~\ref{goodvectorfields} still holds except that the resulting
vector fields $X_k$ are only smooth. The proof of this differentiable version of the statement can immediately
be obtained from the argument given below modulo replacing the analytic statements from Section~3 by their
analogues in the smooth case.}
\end{obs}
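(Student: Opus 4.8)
The plan is to run the proof of Theorem~\ref{goodvectorfields} given below essentially word for word, keeping track of the few places where the real-analyticity of the elements of $G$ --- equivalently, of the Morse--Smale diffeomorphism $F$ --- is actually invoked, and substituting for each of them the corresponding $C^{\infty}$ statement from Section~3. The first step is unchanged: fix a contracting fixed point $p$ of $F$ (or of $F^{-1}$) inside a small ball $U$. Since $G$ is $C^{\infty}$-non-discrete, condition~(2) (non-resonance at $p$) together with Proposition~\ref{vectorfield3} produces, for every prescribed $r\in\N$, a non-identically zero vector field $X^{(r)}$ on $U$ lying in the $C^{r-1}$-closure of $G$. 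The regularity-sensitive part of this assertion is the clause ``$X$ has the same regularity as $F$'' (together with the inputs Proposition~\ref{vectorfield1} and Lemma~\ref{vectorfield4}, which are used in their $C^{\infty}$ resp. regularity-free forms): taking $F$ of class $C^{\infty}$, Proposition~\ref{vectorfield3} now delivers $X^{(r)}$ of class $C^{\infty}$, and --- exactly as in its proof --- $X^{(r)}$ is in fact a \emph{polynomial} vector field of degree at most some $N_0$ depending only on the eigenvalues of $D_pF$, and in particular not on $r$, when read in a chart linearizing $F$ near $p$. Here one replaces the analytic Poincar\'e linearization of $F$ available for analytic $F$ by the $C^{\infty}$ (Sternberg) linearization valid for smooth hyperbolic non-resonant germs; compare Remark~\ref{poincaredulac}.

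The passage from the $C^{r-1}$-closure to the $C^{\infty}$-closure is then carried out as below. Normalizing each $X^{(r)}$ to have $C^0$-norm one on a fixed smaller ball $U_0\subset U$, all the $X^{(r)}$ lie in the finite-dimensional vector space of polynomial vector fields of degree $\le N_0$ on $U_0$; a bounded subset of that space being compact, one extracts a subsequence $X^{(r_j)}$ converging --- simultaneously in every $C^k$-norm, the space being finite-dimensional --- to a non-zero $C^{\infty}$ vector field $X$ on $U_0$. A diagonal argument then shows $X$ lies in the $C^{\infty}$-closure of $G$: given $k\in\N$, $t\in\R$ and a subdomain $V$ on which the flow of $X$ is defined up to time $t$, pick $j$ with $r_j-1\ge k$ and with $X^{(r_j)}$ so $C^k$-close to $X$ that the respective time-$t$ flows are $C^k$-close; since $X^{(r_j)}$ belongs to the $C^{r_j-1}$-closure, hence to the $C^k$-closure of $G$, an element of $G$ $C^k$-approximates the flow of $X^{(r_j)}$ and therefore that of $X$. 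Note that this step --- in the analytic case as well --- rests not on Montel's theorem but on the boundedness of the degrees of the $X^{(r)}$; it is precisely this observation, that the vector fields produced have $r$-independent complexity, which makes the analytic and the smooth arguments literally the same. This is the one point at which one might fear a genuinely new difficulty, and there is none.

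It remains to globalize, and here nothing is regularity-sensitive. Having obtained, near each contracting fixed point $p_1,\dots,p_l$ of $F$ and each $q_1,\dots,q_s$ of $F^{-1}$, a non-zero $C^{\infty}$ vector field in the $C^{\infty}$-closure of $G$, one uses that the $C^{\infty}$-closure is a closed pseudo-Lie algebra stable under pull-backs by elements of $G$, together with the smooth analogue of Lemma~\ref{vectorfield5} --- whose proof is identical, the stable and unstable manifolds of the now merely $C^{\infty}$ Morse--Smale diffeomorphism $F$ still forming finitely many properly embedded submanifolds whose transverse intersections have strictly lower dimension, so that the complement of the union of the basins of attraction is a proper closed subset to which the invariance hypothesis applies. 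Transporting the local vector fields around $M$ by elements of $G$, one then extracts a finite open cover $\{U_k\}$ of $M$ carrying a nowhere-vanishing member of the $C^{\infty}$-closure on each $U_k$. Since none of these last manipulations --- pull-backs, Lie brackets, limits inside the closure, the covering argument --- distinguishes analytic from smooth objects, the conclusion is the asserted one, with ``analytic'' weakened to ``$C^{\infty}$'' for the vector fields $X_k$. The residual work is purely bookkeeping: checking that every appeal made below to a Section~3 statement has been matched with its $C^{\infty}$ counterpart, the $C^{\infty}$-linearizability of $F$ at its hyperbolic fixed points being the only substantive ingredient and being guaranteed by condition~(2).
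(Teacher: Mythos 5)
Your overall strategy (reduce to a contracting fixed point, replace analytic linearization by Sternberg's smooth linearization of the non-resonant contraction, finish with a compactness-plus-diagonal argument) is the intended one, but the step carrying all the weight is unjustified. You assert that Proposition~\ref{vectorfield3} delivers vector fields $X^{(r)}$ that are polynomial of degree at most an $N_0$ depending only on the eigenvalues of $D_pF$, ``in particular not on $r$''. That is not what that proposition gives: in its proof the polynomial part $\overline{h}_i$ is the degree-$r$ Taylor truncation of $F^{-k(i)}\circ g_i\circ F^{k(i)}$ (or, in Case~2, a truncation at an order $N$ determined by the sequence $\{g_i\}$, not by the eigenvalues), and since $k(i)$ is chosen minimal and may remain bounded, nothing forces the limit to concentrate on monomials of large $F$-multiplier. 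The only available bound on the degree of $X^{(r)}$ is $r$ itself, which grows. Without an $r$-independent degree bound the $X^{(r)}$ do not live in one finite-dimensional space of polynomial fields, so your normalization-and-extraction of a limit lying in the $C^{\infty}$-closure collapses (and Ascoli is of no help, since you have no uniform control of higher derivatives).

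The paper obtains the uniform degree bound by exactly the machinery you skipped, and this is what the Remark implicitly points to: first arrange $Y_r(p)\neq 0$ (Lemma~\ref{morevectorfield1}, via condition~(3) --- this is also what ultimately yields the \emph{nowhere-zero} fields $X_k$ required by Theorem~\ref{goodvectorfields}, a point your globalization paragraph leaves untouched); then note that $Y_r(p)\neq 0$ forces the first $F$-homogeneous component of $Y_r$ to have $F$-multiplier larger than~$1$, of which there are only finitely many values, so its degree is bounded by a constant depending only on $\lambda_1,\dots,\lambda_n$ (Claim~1 in the proof of Theorem~\ref{goodvectorfields}); finally renormalize the pull-backs $(F^k)^{\ast}Y_r$ by the top multiplier to isolate, inside the $C^{r}$-closure, a polynomial field of that uniformly bounded degree (Claim~2). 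Only after these steps does your finite-dimensional compactness and diagonal argument apply. These steps do transfer to the smooth setting --- in the Sternberg chart the field produced by Proposition~\ref{vectorfield3} is polynomial, so its $F$-homogeneous decomposition is finite and the renormalization needs no remainder estimate --- but they must be carried out; likewise the smooth substitutes for ``the common zero set of the closure fields is a proper analytic invariant set, hence empty'' (Lemmas~\ref{vectorfield5} and~\ref{morevectorfield1}) need to be addressed rather than declared identical, since condition~(3) as stated concerns analytic subsets.
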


Owing to the compactness of $M$, in order to prove Theorem~\ref{goodvectorfields} it suffices to show that
every point in $M$ belongs to the domain of definition of a nowhere zero analytic vector field in the
$C^{\infty}$-closure of $G$. In turn, Lemma~\ref{vectorfield5} reduces the latter statement to showing that
every contracting fixed point $p$ of $F$ lies in the domain of definition of an analytic vector field $X$ in the
$C^{\infty}$-closure of $G$ and satisfying $X (p) \neq 0$. Let then $p$ as above be fixed.
First we have:

\begin{lema}
\label{morevectorfield1}
Fixed $r \in \N \cup \{ \infty\}$, there is an analytic vector field $Y_r$ defined about $p$ and verifying $Y_r(p) \neq 0$
that is contained in the $C^r$-closure of $G$.
\end{lema}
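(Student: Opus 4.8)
Proposition~\ref{vectorfield3} already produces a non-identically zero analytic vector field of the $C^{r}$-closure of $G$ defined near $p$; what is new in Lemma~\ref{morevectorfield1} is that $Y_r$ must be non-zero \emph{at} $p$ itself (and that $r$ may be $\infty$). The plan is to re-run the construction behind Proposition~\ref{vectorfield3}, but with a carefully chosen seed sequence, so that the limiting vector field does not vanish at $p$. Throughout I work in the coordinates supplied by the proof of Proposition~\ref{vectorfield3}, in which $p$ is the origin, $F$ is the linear homothety $\Lambda={\rm diag}\,(\lambda_1,\dots,\lambda_n)$ with $|\lambda_1|<\dots<|\lambda_n|<1$, and $U$ is a trapping neighbourhood of $p$ (so $F(\overline U)\subset U$ and $F^{-k}(U)\supseteq U$ for all $k$).

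The mechanism rests on how the pull-back operator $F^{\ast}$ acts on germs of vector fields at $p$: in the monomial basis $F^{\ast}\bigl(x^{\alpha}\,\partial/\partial x_m\bigr)=(\lambda^{\alpha}/\lambda_m)\,x^{\alpha}\,\partial/\partial x_m$, and since $|\lambda^{\alpha}|\le|\lambda_n|^{|\alpha|}<1$ for $|\alpha|\ge 1$ whereas $|\lambda_m|^{-1}\le|\lambda_1|^{-1}$ always, the \emph{unique} monomial whose eigenvalue has maximal modulus is the constant field $\partial/\partial x_1$, of modulus $|\lambda_1|^{-1}$, strictly exceeding that of every other monomial. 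Hence, if $g\in G$ is close to the identity and $g(p)-p$ has a non-zero component along $e_1$, then for $N$ large the element $F^{-N}\circ g\circ F^{N}\in G$ has the feature that, in the Taylor expansion of $F^{-N}\circ g\circ F^{N}-{\rm id}$ at $p$, the coefficient $\lambda_1^{-N}(g(p)-p)_1$ on $\partial/\partial x_1$ dominates every other coefficient: the ratio of the coefficient on $x^{\alpha}\,\partial/\partial x_m$ to it equals, up to the fixed scalar $(g_{\alpha})_m/(g(p)-p)_1$, the quantity $(\lambda^{\alpha}\lambda_1/\lambda_m)^{N}$, of modulus $<1$ for $(\alpha,m)\neq(0,1)$, the infinitely many high-degree terms being absorbed exactly as the remainder $R_{i,k}$ is absorbed in Proposition~\ref{vectorfield3} (using $|\lambda_n|^{N_0}<|\lambda_1|$). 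Granting a sequence $\{g_i\}\subset G$ with $g_i\to{\rm id}$ in $C^{\infty}$, $g_i\neq{\rm id}$ and $(g_i(p)-p)_1\neq 0$, I would then set $h_i:=F^{-N(i)}\circ g_i\circ F^{N(i)}$, with $N(i)$ the least integer making $\|\Lambda^{-N(i)}(g_i(p)-p)\|$ exceed a threshold $\delta_i\to0$ chosen (diagonally) to decay much more slowly than $\|g_i(p)-p\|$ and than the derivative bounds needed to control the Taylor remainders. Then $N(i)\to\infty$, $h_i\to{\rm id}$ in $C^{\infty}$, $h_i\neq{\rm id}$, and $h_i$ satisfies the hypotheses of Proposition~\ref{vectorfield1} together with $\|h_i(p)-p\|\ge({\rm Const})\,\|h_i-{\rm id}\|_{0,U}$; the limiting vector field $Y_r=\lim\|h_i-{\rm id}\|_{0,U}^{-1}(h_i-{\rm id})$, which is analytic because it is polynomial-plus-negligible-remainder in the linearizing coordinates (Lemma~\ref{vectorfield4}), lies in the $C^{r}$-closure of $G$ and has $Y_r(p)\neq0$. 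For $r=\infty$ the same vector field works: its dominant part is a polynomial of uniformly bounded degree, so the Euler-polygonal iterates of elements of $G$ approximating its local flow converge in every $C^{k}$ once $\delta_i$ and the truncation orders are fixed by a diagonal argument.

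The remaining — and in my view principal — obstacle is to manufacture the seed sequence. That $G$ contains a sequence tending to ${\rm id}$ and \emph{not} fixing $p$ should follow from condition~(3): the smallest analytic subset $\langle G\!\cdot\!p\rangle$ containing the orbit is $G$-invariant and non-empty, hence equals $M$, so $\{p\}$ is not $G$-invariant, $G\!\cdot\!p$ is infinite, and the $G$-invariant proper analytic set $\bigcap_{g\in G}g\bigl({\rm Fix}\,(h_i)\bigr)$ must be empty — whence for every $i$ some $G$-conjugate of the element $h_i$ of condition~(1) moves $p$; recentring these conjugates by suitable powers of $F$ (exploiting that $p$ is contracting for $F$) yields a sequence tending to the identity and moving $p$. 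The delicate point is to arrange that $g_i(p)-p$ have a non-zero $e_1$-component: if, on the contrary, every near-identity element of $G$ pushed $p$ into the local strong-stable manifold $W^{ss}_{\rm loc}(p)=\{x_1=0\}$ of $F$ at $p$, then saturating this condition under $F$ (which preserves $W^{ss}(p)$) and over all of $G$ should force a proper $G$-invariant analytic subset of $M$, contradicting condition~(3) (possibly through condition~(4) instead); so the required element exists, and conjugating the whole sequence by a fixed power of $F$ if needed one may assume $(g_i(p)-p)_1\neq0$ for all $i$. Apart from settling this transversality point cleanly, the rest is bookkeeping on top of Propositions~\ref{vectorfield1} and~\ref{vectorfield3} and Lemma~\ref{vectorfield4}.
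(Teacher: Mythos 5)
Your renormalization scheme has a genuine quantitative gap at the point where you choose $N(i)$. Taking $N(i)$ to be the \emph{least} integer with $\Vert \Lambda^{-N(i)}(g_i(p)-p)\Vert \geq \delta_i$ only bounds the constant term from below; it gives no control on the other renormalized Taylor coefficients, so the claim ``$h_i \to \mathrm{id}$ in $C^{\infty}$'' is unjustified. Concretely, several monomial modes besides the constant $e_1$-mode are expanded by the conjugation (e.g.\ the $e_2$-component of the constant term grows like $\vert \lambda_2\vert^{-N}$), and if $(g_i(p)-p)_1$ is, say, super-exponentially small compared with the other coefficients of $g_i-\mathrm{id}$, then at time $N(i)$ those modes are enormous and $h_i$ is far from the identity. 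If instead you choose $N(i)$ as in Case~1 of the proof of Proposition~\ref{vectorfield3} (the last time the whole renormalized deviation is below the threshold), then $h_i\to\mathrm{id}$ is restored but the constant $e_1$-term may be negligible at that moment, and the limit field can perfectly well vanish at $p$. So what your construction really requires is a seed sequence for which the first component of $g_i(p)-p$ is comparable, \emph{uniformly in $i$}, to $\Vert g_i-\mathrm{id}\Vert_{r,U}$ -- a much stronger condition than the non-vanishing ``transversality'' you flag, and one that conditions (1)--(4) do not obviously provide; your proposed remedy (conjugating by elements $g=g(i)\in G$ chosen so that the conjugate moves $p$) does not help, because the conjugators vary with $i$ and conjugation by them need not preserve closeness to the identity. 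In addition, the case $r=\infty$ is not mere bookkeeping on top of your scheme: for each finite $r$ your construction produces a different vector field, and obtaining a single field in the $C^{\infty}$-closure is precisely the difficulty addressed separately in Theorem~\ref{goodvectorfields} (the paper explicitly notes that Proposition~\ref{vectorfield1} cannot be directly adapted to the $C^{\infty}$-case).

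The missing idea is that no new construction is needed at all: the paper's proof is soft. Let $\mathcal{S}\subset M$ be the set of points $q$ at which \emph{every} analytic vector field defined about $q$ and contained in the $C^r$-closure of $G$ vanishes. By Proposition~\ref{vectorfield3} combined with Lemma~\ref{vectorfield5}, every point of $M$ lies in the domain of some non-identically zero analytic field in the $C^r$-closure, so $\mathcal{S}$ is a proper analytic subset of $M$; it is $G$-invariant because pull-backs by elements of $G$ of fields in the closure remain in the closure. Condition~(3) then forces $\mathcal{S}=\emptyset$, so some field in the $C^r$-closure is already non-zero at $p$ -- and the same one-line argument applies verbatim for $r=\infty$ once a non-identically zero field in the $C^{\infty}$-closure is available. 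Your attempt to force non-vanishing at $p$ by engineering the seed is therefore both harder than necessary and, as it stands, incomplete.
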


\begin{proof}
Consider the set
$\mathcal{S} \subset M$ consisting of those points $q \in M$ verifying the following condition: every analytic vector field
$X$ defined about $q$ and contained in the $C^r$-closure of $G$ is such that $X (q) =0$.
Since the combination of Proposition~\ref{vectorfield3} and Lemma~\ref{vectorfield5} implies that every point
in $M$ belongs to the domain of definition of a
(non-identically zero as always) analytic vector field contained in the $C^r$-closure of $G$, it follows that
$\mathcal{S}$ is a proper analytic set of $M$. Besides, $\mathcal{S}$ is clearly invariant by $G$. Therefore,
in view of condition~(3), the set $\mathcal{S}$ must be empty and the lemma follows.
\end{proof}

As mentioned, Theorem~\ref{goodvectorfields} amounts to finding an analytic vector field $X$ in the
$C^{\infty}$-closure of $G$ such that $X (p) \neq 0$.
The argument provided in Lemma~\ref{morevectorfield1} (in the case $r =\infty$), in turn, ensures
that it suffices to construct a non-identically zero analytic vector field $X$ defined on a neighborhood of $p$
and contained in the $C^{\infty}$-closure of $G$.

The idea to prove Theorem~\ref{goodvectorfields} is to construct a single vector field
$X$ out of the vector fields $Y_r$ for every $r \in \N$,
that will be in the $C^r$-closure of $G$ for every~$r$. However, before providing details on this construction,
some additional terminology will be needed.

Consider a vector-monomial $Z$, i.e. a vector field of the form $Z= c x_1^{\alpha_1} \ldots x_n^{\alpha_n} \partial /\partial x_l$,
with $\alpha_i \in \N$ for every $i \in \{ 1, \ldots ,n\}$ and $c \in \C$. Recall that, in the coordinates
$(x_1, \ldots ,x_n)$, we have $F(x) =F (x_1, \ldots ,x_n) = (\lambda_1 x_1 , \ldots ,\lambda_n x_n)$ so that
$F^{\ast} Z$ is a constant multiple of $Z$. Let $c_F$ be the value of this constant so that $F^{\ast} Z = c_F .Z$.
The {\it $F$-multiplier}\, $m(F)$ of $Z$ is nothing but the absolute value $\vert c_F \vert$ of $c_F$.
A polynomial vector field is said to be $F$-homogeneous
if all its vector monomials have the same $F$-multiplier. Besides, if $X = \sum_{l=1}^n \sum_{\alpha_1,
\ldots ,\alpha_n \in \N} c_{\alpha_1, \ldots ,\alpha_n}^l x_1^{\alpha_1} \ldots x_n^{\alpha_n} \partial /\partial x_l$
is an analytic vector field and $c x_1^{\alpha_1} \ldots x_n^{\alpha_n} \partial /\partial x_l$ is a vector-monomial
of its Taylor series, then the $F$-homogeneous component of $c x_1^{\alpha_1} \ldots x_n^{\alpha_n} \partial /\partial x_l$
is the $F$-homogeneous vector field consisting of all vector monomials appearing in the Taylor series of $F$ and
having the same $F$-multiplier as $c x_1^{\alpha_1} \ldots x_n^{\alpha_n} \partial /\partial x_l$. The fact that
$\vert \lambda_1 \vert < \cdots < \vert \lambda_n \vert < 1$ ensures that the $F$-homogeneous component of every
vector-monomial appearing in a Taylor series is necessarily a polynomial vector field.

\begin{obs}
{\rm Consider the space $\mathcal{G}$ of (smooth or analytic) vector fields defined on some open set
about a point $q \in M$ and contained in the $C^{\infty}$-closure of $G$. This space is clearly a
(pseudo) Lie algebra over $\R$ though it may, in principle, be trivial. In any event, being an algebra
over $\R$,
every real multiple $cX$ of a vector field $X \in \mathcal{G}$ also lies in $\mathcal{G}$.
This is however not the case if $c$ is allowed
to be an arbitrary complex number. Since, in our discussion, $F$ is
diagonalized over $\C$, this issue would require us to perform some
additional verification whenever multiples of vector fields are considered in the course
of this section. These verifications however will systematically
be left to the reader since they are straightforward. Alternatively, instead of taking $F$ diagonal over $\C$, the
reader may work with the standard ``block-diagonal'' that avoids
the use of complex numbers at the expenses of making the notations slightly cumbersome.}
\end{obs}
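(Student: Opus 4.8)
The remark packages three assertions about the space $\mathcal{G}$ of germs of vector fields in the $C^{\infty}$-closure of $G$ about a point $q \in M$: that $\mathcal{G}$ is a pseudo-Lie algebra over $\R$; that it is stable under multiplication by real scalars but not by arbitrary complex ones; and that working in the $\C$-diagonal form of $F$ forces a reality check which the real block-diagonal form makes unnecessary. The plan is to deduce the first two assertions from the general discussion of Section~3, read now in the $C^{\infty}$-closure, and to settle the remaining point by a conjugation-equivariance observation.

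First I would record the pseudo-Lie algebra structure. Given $X, Y \in \mathcal{G}$ and $a, b \in \R$, the local flows $\phi^{t}_{aX+bY}$ and $\phi^{t}_{[X,Y]}$ are recovered from $\phi^{s}_{X}$ and $\phi^{s}_{Y}$ through the classical Lie--Trotter and commutator product formulas, the convergence in those formulas holding in the $C^{\infty}$-topology on relatively compact subsets of the common domain. Since each $\phi^{s}_{X}$ and $\phi^{s}_{Y}$ is, by definition, a $C^{\infty}$-limit of restrictions of elements of $G$, composing finitely many such approximants and extracting a diagonal subsequence presents $\phi^{t}_{aX+bY}$ and $\phi^{t}_{[X,Y]}$ as $C^{\infty}$-limits of elements of $G$ as well; this is precisely the argument already carried out for $\overline{G}$ in Section~3. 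Stability under real scalars is immediate from $\phi^{t}_{cX}=\phi^{ct}_{X}$ for $c\in\R$. Hence $\mathcal{G}$ is a pseudo-Lie algebra over $\R$.

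Next I would address the complex scalars and the $\C$-diagonalization issue. Every member of $\mathcal{G}$ is a genuine real vector field on an open subset of $M$; equivalently, its holomorphic extension $\widetilde{X}$ to a complex neighborhood in $\widetilde{M}$ satisfies the reality condition relative to the antiholomorphic involution $\sigma$ of $\widetilde{M}$ fixing $M$. The vector field $iX$ violates this condition ($\overline{\widetilde{iX}}=-\widetilde{iX}$), hence is not a real vector field at all and is not even a candidate for membership in $\mathcal{G}$; so $\mathcal{G}$ is only an $\R$-module. When one passes to the coordinate in which $F=(\lambda_{1}x_{1},\ldots,\lambda_{n}x_{n})$ with the $\lambda_{j}$ complex, a real $X\in\mathcal{G}$ acquires a Taylor expansion containing, together with any vector-monomial, its $\sigma$-conjugate; the ``additional verification'' alluded to is then just that the operations performed later in the section---above all the extraction of $F$-homogeneous components---are applied to $\sigma$-invariant combinations, so that the output is again a real vector field of $\mathcal{G}$. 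Since a monomial and its conjugate have equal $F$-multiplier, the $F$-homogeneous decomposition is compatible with $\sigma$, and this compatibility check is routine in every instance where it is needed. I would also spell out the stated alternative: replacing the $\C$-diagonal form of $D_{0}F$ by its real block-diagonal form, grouping each pair $\lambda,\bar\lambda$ into a single $2\times 2$ rotation--scaling block, keeps the entire discussion over $\R$; the notion of $F$-multiplier transcribes verbatim as the common multiplier of such a block, and no equivariance bookkeeping is required.

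The only point with any friction is this last compatibility. One must make sure that the concrete vector fields manufactured in this section---produced as $C^{0}$-limits of renormalized differences $\frac{1}{C_{i}}(h_{i}-\mathrm{id})$ with $h_{i}\in G$ real, and then as $F$-homogeneous components of such limits---are manipulated only by $\R$-linear combinations and Lie brackets, never by multiplication by $i$, so that they remain inside $\mathcal{G}$. This is automatic in the real block-diagonal model and, in the $\C$-diagonal model, amounts to observing that every vector field so obtained is $\sigma$-invariant and that taking $F$-homogeneous components commutes with $\sigma$. I expect no difficulty beyond this bookkeeping, which is why the verifications can safely be left to the reader.
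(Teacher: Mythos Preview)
Your elaboration is correct and entirely in the spirit of the paper. Note that the paper offers no proof here: the remark itself says the verifications ``will systematically be left to the reader since they are straightforward,'' and the pseudo-Lie algebra claim is already asserted in Section~3 (``It is clear from the above definition that vector fields in $\overline{G}$ form a pseudo-Lie algebra\ldots''). Your write-up supplies precisely the details the paper omits, and your device of tracking the antiholomorphic involution $\sigma$ to formalize the reality check---in particular the observation that a vector-monomial and its $\sigma$-conjugate share the same $F$-multiplier, so that $F$-homogeneous components are $\sigma$-stable---is a clean way to make the ``straightforward'' verification explicit.
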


We are now able to prove Theorem~\ref{goodvectorfields}.

\begin{proof}[Proof of Theorem~\ref{goodvectorfields}]
According to Proposition~\ref{vectorfield3} (and Lemma~\ref{morevectorfield1}),
for every $r \in \N$ there exists an analytic vector field $Y_r$ in the
$C^r$-closure of $G$ and verifying $Y_r (p) \neq 0$ (so that $Y_r$ is not identically zero). For $r$ fixed,
let $Z^1_r, Z^2_r, \ldots $ denote the $F$-homogeneous components of the (analytic) vector field $Y_r$ labeled
in decreasing order of the corresponding $F$-multipliers. In other words, if $m^j(F)$ stands for the
$F$-multiplier of $Z_r^j$, then $m^{j_1}(F) > m^{j_2}(F)$ provided that $j_1 < j_2$. Now we have:

\vspace{0.1cm}

\noindent {\it Claim~1}. The sequence of polynomial vector fields $\{ Z_r^1\}_{r \in \N}$ has uniformly
bounded degree.

\noindent {\it Proof of Claim~1}. Note that the set of all $F$-multipliers associated to vector-monomials
is fully constituted by values of the form $\vert \lambda_1^{\alpha_1} \ldots \lambda_n^{\alpha_n} /\lambda_l \vert$,
where $\alpha_i \in \N$ for every $i \in \{ 1, \ldots ,n\}$ and $l \in \{ 1, \ldots ,n\}$.
Since $Y_r (0) \neq 0$, $Y_r$ has a homogeneous component of multiplier
greater than or equal to~$1/\vert \lambda_1 \vert$ (recall that $\vert \lambda_1 \vert < \cdots < \vert \lambda_n \vert < 1$).
Thus there may exist only finitely many values of the form $\vert \lambda_1^{\alpha_1} \ldots \lambda_n^{\alpha_n} /\lambda_l
\vert$ that are greater than~$1/\vert \lambda_1 \vert$ and these values depend only on
$\lambda_1, \ldots ,\lambda_n$ (in particular they do not depend on~$r$). The claim follows.\qed

Let $d$ denote the degree of the first $F$-homogeneous component $Z^1_r$ of $Y_r$.
We shall prove:

\vspace{0.1cm}

\noindent {\it Claim~2}. There exists a (non-identically zero) polynomial vector field of degree
at most~$d$ which is contained in the $C^r$-closure of $G$.

\noindent {\it Proof of Claim~2}. Let $r$ and $Y_r$ be fixed. Denote by $m^j (F)$ the $F$-multiplier of
$Z_r^j$ for $j \in \N^{\ast}$.
For $j$ fixed, the $j^{\rm th}$-$F$-homogeneous component $Z^j_r$ of $Y_r$ can be split into a sum $\sum_{l=1}^{m(j)}
Z_r^{j,l}$ of vector monomials. Now consider the sequence $\{ Y_r^k\}_{k \in \N}$ (indexed by $k$) of vector fields
defined by $Y_r^k = (F^k)^{\ast} Y_r$. With the previous notations, we have
$$
Y_r^k = (F^k)^{\ast} Y_r = [m^1 (F)]^k  \, ( \sum_{l=1}^{m(1)} c^{1,l} Z_r^{1,l} )
+ \sum_{j \geq 2} \, [m^j (F)]^k ( \sum_{l=1}^{m(j)} c^{j,l} Z_r^{j,l} )
$$
where all the coefficients $c^{1,l}$ and $c^{j,l}$ are complex numbers lying in the unit circle.
Clearly all the vector fields $Y_r^k$ belong to the $C^r$-closure of $G$. Since constant multiples of vector fields
in the closure of $G$ also belong to the closure of $G$, it follows that the new vector fields $\overline{Y}_r^k$
given by $\overline{Y}_r^k = (F^k)^{\ast} Y_r /(m^1 (F))^k$ belong to the $C^r$-closure of $G$ as well. These latter
vector fields are however given by
$$
\overline{Y}_r^k = \sum_{l=1}^{m(1)} c^{1,l} Z_r^{1,l} + \sum_{j\geq 2} \left[
\frac{m^j (F)}{m^1 (F)} \right]^k ( \sum_{l=1}^{m(j)} c^{j,l} Z_r^{j,l}) \, .
$$
In particular, by sending $k \rightarrow \infty$, it follows that $\{ \overline{Y}_r^k \}$ forms a compact family
whose non-trivial accumulation points in the $C^{\infty}$-topology are all contained in the space of polynomial
vector fields with degree at most~$d$. These accumulation points are not identically zero vector fields since all
the vector monomials appearing in $Z^1_r$ will also appear in these limits after being multiplied by some
complex number with absolute value equal to~$1$. Next note that every converging subsequence of
$\{ \overline{Y}_r^k \}$ will converge to its limit (polynomial) vector field not only in the $C^r$ but also
in the $C^{\infty}$-topology since
the latter is controlled by the topology of (the finitely many) coefficients. Because the $C^r$-closure of $G$ is naturally
closed for the $C^r$-topology, and given that $\overline{Y}_r^k$ belongs to the $C^r$-closure of $G$ for every~$k$,
it follows that all the above mentioned limits belong to the $C^r$-closure of $G$ as well.
The claim is proved.\qed

Let us now conclude the existence of a non-identically zero analytic vector field defined about $p \simeq 0 \in \R^n$
and contained in the $C^{\infty}$-closure of $G$. For each $r$, let $\overline{Z}_1^r$ be a polynomial vector field
belonging to the $C^r$-closure of $G$ and obtained from the construction given in the proof of Claim~2.
Consider the sequence (indexed by $r \in \N$) of vector fields
$\overline{Z}_r^1$. It follows from Claim~1 that all the (polynomial) vector fields in this sequence have
degree uniformly bounded by some constant~$N \in \N$. Moreover, modulo multiplying each $\overline{Z}_r^i$ by suitable
constants, we can assume without loss of generality that the maximum of the norm of the corresponding
coefficients is exactly~$1$. Now, since the space of polynomials with bounded degree is locally compact,
we can suppose that the vector fields $\overline{Z}_r^1$ converge (even in the analytic topology) towards a polynomial
vector field $\overline{Z}_{\infty}$ which does not vanish identically since at least one of its coefficient has absolute
value equal to~$1$. It remains only to check that $\overline{Z}_{\infty}$ lies in the $C^{\infty}$-closure of $G$.
This is however easy: note that, fixed $r_0 \in \N$, the vector fields $\overline{Z}_r^1$ belong to the $C^{r_0}$-closure
of $G$ provided that $r \geq r_0$. Since the latter converge in the $C^{\infty}$-topology towards $\overline{Z}_{\infty}$,
it follows that $\overline{Z}_{\infty}$ belongs to the $C^{r_0}$-closure of $G$. Since $r_0$ is arbitrary, we conclude that
$\overline{Z}_{\infty}$ belongs to the $C^{\infty}$-closure of $G$ completing the proof of the theorem.\
\end{proof}

In closing this paragraph, let us provide a useful and fairly general lemma. Note that the
first intrinsic advantage of considering (analytic) vector fields in the $C^{\infty}$-closure of $G$, as opposed
to vector fields in the $C^r$-closure of $G$, lies in the fact that the former clearly constitutes a Lie (pseudo-)
algebra. Moreover, letting $p$ be
a fixed contracting point of $F$ and choosing local coordinates where
$F$ becomes the homothety $(x_1, \ldots ,x_n) \mapsto (\lambda_1 x_1 , \ldots ,\lambda_n x_n)$, we have
the following.

\begin{lema}
\label{grading}
If $X$ is an analytic vector field lying in the $C^{\infty}$-closure of $G$, then every $F$-homogeneous
component of $X$ belongs to the $C^{\infty}$-closure of $G$ as well.
\end{lema}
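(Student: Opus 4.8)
The plan is to exploit the same averaging-over-the-powers-of-$F$ trick used in the proof of Claim~2 inside the proof of Theorem~\ref{goodvectorfields}, now applied to an arbitrary analytic vector field $X$ in the $C^\infty$-closure of $G$ rather than to the specific fields $Y_r$. First I would write $X$ in the local coordinates where $F$ becomes the homothety $(x_1,\ldots,x_n)\mapsto(\lambda_1 x_1,\ldots,\lambda_n x_n)$ with $|\lambda_1|<\cdots<|\lambda_n|<1$, and group the Taylor series of $X$ into its $F$-homogeneous components $X = \sum_{j\geq 1} X^j$, where $X^j$ collects all vector-monomials with a common $F$-multiplier $m^j(F)$, ordered so that $m^{j_1}(F)>m^{j_2}(F)$ when $j_1<j_2$. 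The key structural fact, already noted in the text, is that each $X^j$ is a genuine polynomial vector field (the strict inequalities among the $|\lambda_i|$ force only finitely many monomials to share a given multiplier), and that $F^\ast X^j = m^j(F)\,X^j$ up to the harmless complex-multiplier issue flagged in the remark preceding the lemma.

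The heart of the argument is then a descending induction on $j$. Since $X$ lies in the $C^\infty$-closure of $G$, so does $(F^k)^\ast X$ for every $k\in\N$, because pulling back by an element of $G$ (here $F^k$) preserves the closure; and since real (and, with the standard verification, the relevant complex) constant multiples of fields in the closure remain in the closure, the normalized fields $\overline{X}_k := (F^k)^\ast X / (m^1(F))^k$ all lie in the $C^\infty$-closure of $G$. Writing
\[
\overline{X}_k = X^1 + \sum_{j\geq 2}\left[\frac{m^j(F)}{m^1(F)}\right]^k X^j
\]
and using $m^j(F)/m^1(F)<1$ for $j\geq 2$, one sees that $\overline{X}_k \to X^1$ as $k\to\infty$, with convergence on a fixed polydisc and in fact in the $C^\infty$-topology once one observes that each partial sum beyond a fixed degree is being multiplied by a tail factor going to zero uniformly on compact sets — more precisely, convergence is controlled by the finitely many coefficients of the bounded-degree pieces together with a uniformly convergent analytic tail. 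Since the $C^\infty$-closure of $G$ is closed under $C^\infty$-limits, it follows that $X^1$ itself lies in the $C^\infty$-closure of $G$. Now subtract: $X - X^1$ is again analytic, lies in the $C^\infty$-closure of $G$ (the closure is a pseudo-Lie algebra, hence closed under differences of fields on a common domain), and its first $F$-homogeneous component is $X^2$. Iterating extracts $X^2$, then $X^3$, and so on, so every $F$-homogeneous component of $X$ lies in the $C^\infty$-closure of $G$.

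The step I expect to require the most care is the justification that $\overline{X}_k \to X^1$ genuinely in the $C^\infty$-topology (not merely formally coefficient-by-coefficient), uniformly on a fixed neighborhood of $p$: one must be sure that conjugating by $F^k$ does not shrink the common domain of definition and that the analytic tail $\sum_{j\geq 2}(m^j(F)/m^1(F))^k X^j$ — an infinite sum of polynomial pieces of unbounded degree — converges to zero in every $C^r$-norm uniformly in $k$ being large. This is handled exactly as in the proof of Claim~2 and in Proposition~\ref{vectorfield3}: because $X$ is analytic it is bounded on a polydisc, Cauchy estimates bound its homogeneous pieces geometrically, and the extra factor $(m^j(F)/m^1(F))^k \le (m^2(F)/m^1(F))^k$ dominates and kills the sum; after shrinking $U$ slightly the convergence is uniform with all derivatives. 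Apart from this and the routine real-vs-complex multiplier bookkeeping mentioned in the remark, the lemma is a direct reprise of the Claim~2 mechanism, now packaged as an induction peeling off homogeneous components one multiplier at a time.
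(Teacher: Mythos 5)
Your overall skeleton (pull back by powers of $F$, normalize by $(m^1(F))^k$, use compactness of bounded-degree polynomial fields, then peel off components by descending induction on the multiplier) is the same as the paper's, but there is a genuine gap at the central step: the claim that $\overline{X}_k = (F^k)^{\ast}X/(m^1(F))^k$ converges to $X^1$. The $F$-multiplier is only the \emph{modulus} of the constant by which a vector-monomial is multiplied under $F^{\ast}$: for a monomial $Z^{1,l}$ of $X^1$ one has $F^{\ast}Z^{1,l} = m^1(F)\, e^{\theta_l\sqrt{-1}}\, Z^{1,l}$, so the leading part of $\overline{X}_k$ is $\sum_l e^{k\theta_l\sqrt{-1}}\, c^{1,l} Z^{1,l}$, which oscillates with $k$ rather than converging, as soon as $D_0F$ has non-real (or negative real) eigenvalues — exactly the situation the paper allows by diagonalizing over $\C$. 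This is not the ``harmless bookkeeping'' issue about real versus complex scalar multiples; it is the reason the paper, in the proof of Claim~2 inside Theorem~\ref{goodvectorfields}, only asserts that the normalized family is compact with non-trivial polynomial accumulation points, never that it converges to the top homogeneous component.

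Consequently your induction stalls at its first step: subsequential limits of $\overline{X}_k$ have the form $\sum_l e^{\psi_l\sqrt{-1}} c^{1,l} Z^{1,l}$ for various phase vectors, and such a limit is in general not a constant multiple of $X^1$ unless $X^1$ is a single monomial or all phases coincide (e.g.\ all eigenvalues positive real). The paper closes this gap with an extra argument you omit: when all $\theta_l$ are rational multiples of $2\pi$ a subsequence does converge to a constant multiple of $Z^1$; in general one inducts on the number of vector-monomials in $Z^1$, renormalizes by $[m^1(F)]^k e^{k\theta_1\sqrt{-1}}$, and takes the \emph{difference of two suitably chosen limit points} to isolate each monomial $Z^1_l$ separately (up to a complex constant, whence the real-form verification), and only then reassembles $Z^1$ and proceeds with the subtraction $X-Z^1$, $X-Z^1-Z^2$, etc. Your domain and tail estimates (contraction keeps $U$ stable under $F^k$, Cauchy estimates kill the $j\geq 2$ tail) are fine; what is missing is precisely the phase-handling mechanism that occupies the second half of the paper's proof.
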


\begin{proof}
Let $Z^1, Z^2, \ldots$ denote the $F$-homogeneous components of $X$ labeled in terms of decreasing $F$-multipliers.
The central point is to show that the $F$-homogeneous component $Z^1$ belongs to the $C^{\infty}$-closure of $G$. Once
this has been established, the lemma follows by inductively considering the vector fields
$X-Z^1$, $X-Z^1-Z^2$ and so on.

To check that $Z^1$ lies in the $C^{\infty}$-closure of $G$, consider $Z^1$ written as a finite sum of vector-monomials
$$
Z^1 = \sum_{l=1}^N a_{\alpha_1(l), \ldots , \alpha_n(l)} x_1^{\alpha_1 (l)} \ldots x_n^{\alpha_1 (l)} \partial /
\partial x_{\beta(l)} \, .
$$
For $l$ fixed, the value of the constant $c_F$ for which the monomial in question is multiplied when pulled-back
by $F$ is going to be denoted by $c_l$. In particular, for every $l$ we have $c_l =m^1(F) e^{\theta_l \sqrt{-1}}$
where $m^1 (F)$ is the $F$-multiplier of $Z^1$. As previously seen, the sequence $Y_k =(F^k)^{\ast} X /m^1 (F)$
forms a compact family whose non-trivial limit points are contained in the space of polynomial vector fields
with degree bounded by the degree of $Z^1$. Next note that, if all the $\theta_l$ are rational, then
there exists a subsequence of $\{ Y_k\}$ converging towards a constant multiple of $Z^1$ and the statement follows
in this case.

The general case can be treated by induction on the number of vector monomials appearing in $Z^1$. To simplify
the discussion, suppose that this number is two, i.e. $Z^1 = Z^1_1 + Z^1_2$ where $Z^1_1 ,\, Z^1_2$ are
vector-monomials respectively satisfying $F^{\ast} Z^1_1 = m^1 (F) e^{\theta_1 \sqrt{-1}}$ and
$F^{\ast} Z^1_2 = m^1 (F) e^{\theta_2 \sqrt{-1}}$. It is then clear that the sequence of vector fields
$Y_k = (F^k)^{\ast} X /[m^1 (F)]^k e^{k\theta_1 \sqrt{-1}}$ has its (proper) accumulation points contained in
the set of vector fields having the form
$$
Z_1^1 + e^{k(\theta_2-\theta_1) \sqrt{-1}} Z_2^1 \, .
$$
By taking the difference between two conveniently chosen limits as above, we conclude that $Z^1_2$ lies
in the $C^{\infty}$-closure of $G$. In turn, this implies that the same holds for $Z^1_1$ and hence
$Z^1 = Z^1_1 + Z^1_2$ must belong to the $C^{\infty}$-closure of $G$ as well. Now a standard induction argument
completes the proof of the lemma.
\end{proof}

\subsection{Extensions of the translation group of $\R^n$ and quasi-invariant measures}

Let ${\bf T}$ denote the group of translations of $\R^n$ and consider another $C^{\infty}$ (or $C^{\omega}$)
diffeomorphism $f: \R^n \rightarrow \R^n$ having a contracting fixed point at the origin. The {\it free product}\,
between $\Z$ and ${\bf T}$ is going to be denoted by $\Z{\bf T}$. The choice of $f$ enables us to define
a representation $\rho_0 : \Z {\bf T} \rightarrow {\rm Diff}\, (\R^n)$ by setting $\rho_0 (1\ast 0) = f$
and $\rho_0 (0\ast B)$ equal to the translation $x \mapsto x+B$. The image of $\Z {\bf T}$ by $\rho_0$ is going to
be denoted by $\Gamma_0$. Naturally we have $f \in \Gamma_0$ and ${\bf T} \subset \Gamma_0$.

Note that the existence of elements with contracting fixed points in $\Gamma_0$ easily implies that
this group does not possess any invariant sigma-finite measure on $\R^n$.
Thus it is natural to look for $\Gamma_0$-invariant {\it classes of measures}\, on $\R^n$. These are fully
characterized by the proposition below.

\begin{prop}
\label{uniquenesslebesgue}
Every probability measure on $\R^n$ that is quasi-invariant by the action $\Gamma_0$ is absolutely continuous with
respect to the Lebesgue measure.
\end{prop}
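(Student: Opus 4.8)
The plan is to argue by contradiction, exploiting the two features of $\Gamma_0$: the large group ${\bf T}$ of translations on one hand, and the contracting fixed point of $f$ at the origin on the other. Let $\mu$ be a probability measure quasi-invariant under $\Gamma_0$, and decompose it via the Lebesgue decomposition as $\mu = \mu_{ac} + \mu_{s}$, where $\mu_{ac} \ll \Leb$ and $\mu_s \perp \Leb$. First I would observe that the quasi-invariance under the translation subgroup ${\bf T}$ already forces each of $\mu_{ac}$ and $\mu_s$ to be separately quasi-invariant under ${\bf T}$ (because translations preserve the class of $\Leb$, so they cannot mix the absolutely continuous and the singular parts). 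Hence, if $\mu_s \neq 0$, after normalization we obtain a ${\bf T}$-quasi-invariant probability measure on $\R^n$ that is singular with respect to $\Leb$; the goal becomes to derive a contradiction from the additional presence of $f$.

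The next step is to record what $\Leb$-quasi-invariance under the full translation group means for a singular measure. Write $\nu = \mu_s/\|\mu_s\|$. Quasi-invariance under ${\bf T}$ says that for every $B \in \R^n$ the translate $\nu_B := (x \mapsto x+B)_\ast \nu$ is equivalent to $\nu$. A singular measure $\nu$ is concentrated on a Borel set $S$ with $\Leb(S)=0$; by Fubini the set of $B$ with $\Leb(S \cap (S+B)) > 0$ has Lebesgue measure zero, yet quasi-invariance makes $\nu(S+B)=1$ for \emph{every} $B$. This is the classical obstruction: a $\sigma$-finite Borel measure on $\R^n$ quasi-invariant under all translations must be equivalent to $\Leb$ (this is, essentially, uniqueness of Haar measure up to equivalence, or one may invoke the Oxtoby--Ulam type statement alluded to in the paragraph preceding the proposition). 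Therefore $\nu \sim \Leb$, contradicting $\nu \perp \Leb$ unless $\nu = 0$. Consequently $\mu_s = 0$ and $\mu = \mu_{ac}$ is absolutely continuous, which is exactly the assertion. In fact this shows the role of $f$ is only to guarantee that $\Gamma_0$ admits no finite invariant $\sigma$-finite measure at all (so that the problem of invariant \emph{classes} is the right one), not to rule out singular parts; the singular part is already killed by ${\bf T}$ alone.

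I should double-check one subtlety, which I expect to be the main technical point: the argument that ${\bf T}$-quasi-invariance of $\mu$ descends to $\mu_{ac}$ and $\mu_s$ separately. This uses that for each fixed $g \in {\bf T}$ the pushforward $g_\ast$ is an equivalence of measure classes preserving $[\Leb]$, so $g_\ast \mu = g_\ast \mu_{ac} + g_\ast \mu_s$ is again a Lebesgue decomposition of $g_\ast \mu$; since $g_\ast \mu \ll \mu$ and $\mu \ll g_\ast^{-1}(g_\ast\mu)$, uniqueness of the Lebesgue decomposition gives $g_\ast \mu_{ac} \ll \mu_{ac}$ and $g_\ast \mu_s \ll \mu_s$, and symmetrically with $g^{-1}$, hence equivalence. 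The remaining ingredient — that a translation-quasi-invariant $\sigma$-finite measure on $\R^n$ is equivalent to $\Leb$ — is standard; if one wants to avoid citing it, one can reprove it by a density-point argument: if $A$ has $0 < \Leb(A) < \infty$ and $\Leb(A) = 0$ were possible for a set of full $\nu$-measure, then averaging $\nu(A+B)$ against a smooth bump in $B$ and using Fubini forces $\nu$ to put mass on a set of positive Lebesgue measure, contradicting singularity. So the only real work is this density/Fubini lemma, and everything else is bookkeeping with the Lebesgue decomposition.
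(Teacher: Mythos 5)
Your argument is correct, but it takes a genuinely different route from the paper's. The paper never uses the Lebesgue decomposition: it applies the Oxtoby--Ulam theorem to get a homeomorphism $H$ with $H^{\ast}\mu=\leb$, conjugates the translation group to ${\bf T}^H=H^{-1}\circ{\bf T}\circ H$ (whose elements are shown to be absolutely continuous homeomorphisms), builds by a Tonelli argument (Lemma~\ref{qinvariant1}) a sigma-finite ${\bf T}^H$-invariant measure $\nu\ll\Leb$, and finally pushes $\nu$ forward by $H$ and invokes uniqueness of Haar measure to conclude that $H$, and hence $\mu=H_{\ast}\leb$, is absolutely continuous. You instead reduce everything to the classical fact that a sigma-finite measure on $\R^n$ quasi-invariant under \emph{all} translations is equivalent to Lebesgue measure, proved by the convolution/Fubini mechanism: if $S$ carries the singular part $\nu$ and $\Leb(S)=0$, then for a strictly positive integrable weight $w$ one has $\int \nu(S+B)\,w(B)\,dB=\int\bigl(\int_{x-S}w\bigr)d\nu(x)=0$, so $\nu(S+B)=0$ for a.e.\ $B$, while quasi-invariance forces $\nu(S+B)=1$ for every $B$ (the complement of $S$ is $\nu$-null and translations preserve null sets). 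Your approach is more elementary (no Oxtoby--Ulam, no conjugated group $\Gamma_1$), gives the stronger conclusion $\mu\sim\Leb$, and makes explicit what the paper's proof also implicitly shows, namely that only the subgroup ${\bf T}$ is used here and $f$ serves only to rule out genuinely invariant measures; what the paper's construction buys in exchange is machinery (the conjugating homeomorphism $H$ and the absolutely continuous invariant measure for ${\bf T}^H$) in the spirit reused elsewhere in Section~4. Three small corrections: the preliminary Lebesgue-decomposition step is dispensable --- you may apply the translation fact directly to $\mu$ --- though your descent argument for it is valid; your parenthetical identification of that fact with ``the Oxtoby--Ulam type statement'' is off, since Oxtoby--Ulam is the homeomorphic-realization theorem and not a quasi-invariance statement; and the sentence involving ``$0<\Leb(A)<\infty$ and $\Leb(A)=0$'' is garbled and should simply be replaced by the Tonelli computation above.
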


The usual Lebesgue measure $\Leb$ on $\R^n$ is clearly quasi-invariant under the action of $\Gamma_0$. Since we are
dealing only with quasi-invariant measures, it is useful to consider also another Lebesgue
measure $\leb$ on $\R^n$ arising from normalizing $\Leb$ by some Gaussian kernel with radial symmetry so as
to have $\leb (\R^n) =1$. Let now $\mu$ denote
an arbitrary probability measure on $\R^n$ that also happens to be quasi-invariant under the action $\Gamma_0$. Naturally $\mu$
cannot give mass to points since $\Gamma_0$ is uncountable. Furthermore, being quasi-invariant by $\Gamma_0$, the support
of $\mu$ coincides with all of $\R^n$. At this point, a deep theorem due to Oxtobi and Ulam \cite{OxU}
guarantees the existence of a homeomorphism
$H: \R^n \rightarrow \R^n$ such that $\leb = H^{\ast} \mu$, i.e. for every Borel set $\calb \subseteq \R$ we have
\begin{equation}
\leb \, (H^{-1} (\calb)) = \mu  ( \calb) \, . \label{muH}
\end{equation}

By using the homeomorphism $H$ a new representation $\rho_1$ from $\Z {\bf T}$ to ${\rm Homeo}\, (\R^n)$
can be constructed by letting $\rho_1 = H^{-1} \circ \rho_0 \circ H$. The image of $\Z {\bf T}$ by $\rho_1$
will be denoted by $\Gamma_1 \subset {\rm Homeo}\, (\R^n)$. Clearly $\Gamma_0, \, \Gamma_1$ are isomorphic
as abstract groups. Next, we claim that every element $\gamma_1 \in \Gamma_1$ is an absolutely continuous
homeomorphism of $\R^n$. To check the claim, denote by $\gamma_0$ the element of $\Gamma_0$ satisfying
$\gamma_1 = H^{-1} \circ \gamma_0 \circ H$. Suppose then that $\calb$ is a Borel set
such that $\leb \, (\calb) =0$. Since $H$ is invertible, $\calb = H^{-1} (H (\calb))$ and thus $\mu (H (\calb))=0$.
However $\mu$ is quasi-invariant under $\Gamma_0$ so that $\mu [ \gamma_0 \circ H (\calb)] =0$.
In turn, Equation~(\ref{muH}) provides
$\leb \, [(H^{-1} \circ \gamma_0 \circ H (\calb)] =0$ i.e. $\leb \, [ \gamma_1  (\calb)]=0$. Since $\gamma_1$
is an arbitrary element of $\Gamma_1$ the claim follows at once.

Going back to $\Gamma_0$, the group of translations ${\bf T}$ is naturally a subgroup of $\Gamma_0$. As to
$\Gamma_1$, the image under $\rho_1$ of $0 \ast {\bf T} \subset \Z {\bf T}$ yields a commutative and simply
transitive group of (absolutely continuous) homeomorphism of $\R^n$ that is going to be denoted by ${\bf T}^H$.
Note that the action of ${\bf T}^H$ is naturally identified to $H^{-1} \circ {\bf T} \circ H$. Since ${\bf T}$
preserves the Lebesgue measure $\Leb$ on $\R^n$, the group ${\bf T}^H$ preserves the pull-back $H^{\ast} \Leb$
of $\Leb$ by $H$. In particular ${\bf T}^H$ possesses a sigma-finite invariant measure. Now we state:

\begin{lema}
\label{qinvariant1}
There is a sigma-finite measure $\nu$ on $\R^n$ satisfying the following conditions:
\begin{enumerate}
  \item $\nu$ is invariant under ${\bf T}^H$.
  \item $\nu$ is absolutely continuous with respect to $\Leb$ (or, equivalently, to $\leb$).
\end{enumerate}
\end{lema}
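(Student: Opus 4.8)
The plan is to take for $\nu$ the measure $H^{\ast}\Leb$ itself. By the discussion preceding the statement this measure is already $\sigma$-finite and invariant under ${\bf T}^H$, so the only point left to check is that $H^{\ast}\Leb$ is absolutely continuous with respect to $\Leb$ — in fact it will turn out to be equivalent to $\Leb$.

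First I would use that ${\bf T}^H = H^{-1}\circ{\bf T}\circ H$ acts on $\R^n$ exactly as a conjugate, through the homeomorphism $H$, of the translation group of $\R^n$ acting on itself: if $\gamma\in{\bf T}^H$ corresponds to the translation $\tau$, then $H\circ\gamma=\tau\circ H$, so that $H$ intertwines the ${\bf T}^H$-action with the (simply transitive) translation action of $\R^n$. Next recall that $\leb$ is quasi-invariant under ${\bf T}^H$: this was established above, where it was checked that every element of $\Gamma_1$, and hence of ${\bf T}^H\subset\Gamma_1$, is an absolutely continuous homeomorphism whose inverse is likewise absolutely continuous. Pushing $\leb$ forward by $H$, we therefore obtain a nonzero $\sigma$-finite Borel measure $\eta:=H_{\ast}\leb$ on $\R^n$ which is quasi-invariant under every translation of $\R^n$ (in fact $\eta$ is exactly the original measure $\mu$, by the defining property of $H$).

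At this point I would invoke the classical fact that any nonzero $\sigma$-finite Borel measure $\eta$ on $\R^n$ quasi-invariant under all translations is equivalent to Lebesgue measure; this is the step that genuinely uses the structure of $\R^n$, and I expect it to be the crux of the argument, since it is precisely what rules out the a priori possibility that every ${\bf T}^H$-invariant measure be singular with respect to $\Leb$. Its proof is a short application of Tonelli's theorem: for every Borel set $E$ one has $\int_{\R^n}\eta(E-v)\,d\Leb(v)=\int_{\R^n}\Leb(E-x)\,d\eta(x)=\Leb(E)\,\eta(\R^n)$, where the translation invariance of $\Leb$ was used. If $\Leb(E)=0$ the left-hand side vanishes, so $\eta(E-v)=0$ for $\Leb$-almost every $v$ and hence $\eta(E)=0$ by quasi-invariance; conversely if $\eta(E)=0$ then $\eta(E-v)=0$ for every $v$, so the left-hand side vanishes and $\Leb(E)=0$ because $\eta(\R^n)>0$. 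Applying this to $\eta=H_{\ast}\leb$ yields $H_{\ast}\leb\sim\Leb$, whence $\leb\sim(H^{-1})_{\ast}\Leb=H^{\ast}\Leb$. Consequently $H^{\ast}\Leb$ is equivalent to $\leb$, hence to $\Leb$, and the measure $\nu:=H^{\ast}\Leb$ is $\sigma$-finite, invariant under ${\bf T}^H$ and absolutely continuous with respect to $\Leb$, as asserted.

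An equivalent way to organize the argument would be to start from the Lebesgue decomposition $H^{\ast}\Leb=\nu_{\mathrm{ac}}+\nu_{\mathrm{sing}}$ relative to $\Leb$, note that both summands are ${\bf T}^H$-invariant (since every element of ${\bf T}^H$ preserves the class of $\Leb$, by uniqueness of the decomposition), and then show $\nu_{\mathrm{sing}}=0$; but this reduction still rests on exactly the same point, namely that a nonzero $\sigma$-finite translation-quasi-invariant measure on $\R^n$ cannot be singular with respect to $\Leb$.
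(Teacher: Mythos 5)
Your proposal is correct, and it takes a genuinely different route from the paper. You set $\nu = H^{\ast}\Leb$ outright (its sigma-finiteness and ${\bf T}^H$-invariance are indeed already available from the discussion preceding the lemma) and reduce everything to the classical fact that a nonzero sigma-finite Borel measure on $\R^n$ which is quasi-invariant under every translation must be equivalent to $\Leb$, applied to $H_{\ast}\leb=\mu$; your Tonelli computation $\int_{\R^n}\eta(E-v)\,d\Leb(v)=\Leb(E)\,\eta(\R^n)$ proves that fact cleanly, and the transport of the equivalence $\mu\sim\Leb$ through the bijection $H^{-1}$ is legitimate. The paper argues differently: it keeps the Radon--Nikodym densities ${\rm Det}\,(\rho(B))$ of the conjugated translations, uses Tonelli only to find a point $z$ at which $B\mapsto {\rm Det}\,(\rho(B))(z)$ is locally integrable, and from this builds an explicit absolutely continuous homeomorphism $\digamma$ (a radial reparametrization) that straightens ${\bf T}^H$ into the translation group, finally taking $\nu=\digamma^{\ast}\Leb$; the equivalence of $\mu$ with $\Leb$ is only obtained afterwards, in the proof of Proposition~\ref{uniquenesslebesgue}, via uniqueness of Haar measure. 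What each approach buys: yours is shorter and in fact proves more, since it yields $\mu\sim\Leb$ directly and thereby makes the subsequent Haar-uniqueness step essentially redundant, while the paper's construction produces a concrete conjugating map $\digamma$, in the spirit of the one-dimensional argument it generalizes, at the cost of a more delicate construction. Both proofs rest on the same hypotheses (quasi-invariance of $\mu$ under the full translation subgroup of $\Gamma_0$) and both have Tonelli's theorem as the technical crux, so there is no hidden gap in your version.
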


\begin{proof}
Note that ${\bf T}^H$ is the image of the continuous homomorphism
$\rho : \R^n \rightarrow {\rm Homeo}\, (\R^n)$ given by $\rho (B) (x) = H^{-1} (H(x) + B)$ since $H$ conjugates
${\bf T}$ and ${\bf T}^H$. Besides, we already
know that the element $\rho (B) \in {\rm Homeo}\, (\R^n)$ is absolutely continuous for every $B \in \R^n$.
Therefore, given $B \in \R^n$, the Radon-Nikodym derivative
of $[\rho (B)]^{\ast} \Leb$ with respect to $\Leb$ at the point
at $x=(x_1, \ldots ,x_n) \in \R^n$ can be considered. The resulting function, denoted by ${\rm Det} \, (\rho (B)) : \R^n \rightarrow \R$,
is clearly nonnegative and belongs to $L_{\rm loc}^1$ (where, as
usual, $L^1, \, L_{\rm loc}^1$ are defined with respect to the Lebesgue measure).

Let us now consider the ``evaluation map'' at $z \in \R^n$ defined by $B \mapsto {\rm Det} \, (\rho (B)) (z)$
where $z \in \R^n$ is fixed. Now we have:

\noindent {\it Claim}: For almost all $z \in \R$ the evaluation map $B \mapsto {\rm Det} \, (\rho (B)) (z)$ belongs to
$L_{\rm loc}^1$.

\noindent {\it Proof of the Claim}. Consider two points $p = (p_1, \ldots ,p_n)$ and $q=(q_1, \ldots ,q_n)$ of $\R^n$.
Let $B =(b_1, \ldots ,b_n)$ be fixed. The statement would follows from Fubini's theorem, if we can ensure
that the ``double integral''
$$
\iint\limits_{(z, B) \in \R^n \times \R^n} {\rm Det} \, (\rho (B)) (z) dB dz
$$
is well-defined. Nonetheless, since ${\rm Det} \, (\rho (B))$ is nonnegative,
the classical Tonelli's theorem (cf. \cite{bartle}, page~118) reduces this proof to showing that the integral
$$
\int_{p_1}^{q_1} \cdots \int_{p_n}^{q_n}  {\rm Det} \, (\rho (B)) (z) dz
$$
exists for every $B \in \R^n$. By construction, the latter integral is nothing but the Lebesgue
measure $\Leb$ of the solid $H ([p_1,q_1] \times \cdots \times [p_n,q_n])$ and hence it is well-defined.
The claim is proved.\qed

Fix $z \in \R^n$ such that the evaluation map $B \mapsto {\rm Det} \, (\rho (B)) (x)$
belongs to $L_{\rm loc}^1$. Next let $\digamma : \R^n \rightarrow \R^n$ be defined by $\digamma (B) = c (B). B$,
where $B \in \R^n$ and where $c (B) \in \R$ is given by
$$
c (B) =  \int_{0}^B {\rm Det} \, (\rho (x_1, \ldots ,x_n)) (z) \, dx_1 \ldots dx_n \, .
$$
Here ``$0$'' stands for the origin of $\R^n$ while the integration path is the radial line from the origin
to $B \in \R^n$. The map $\digamma$ is clearly continuous and its restriction to real lines through the origin
is monotone in the natural sense. It then follows that $\digamma$ is one-to-one and hence it is a homeomorphism
onto its image contained in $\R^n$. Furthermore the homeomorphism $\digamma$ is, indeed, absolutely continuous
since the map $B \mapsto {\rm Det} \, (\rho (B)) (z)$ belongs to $L_{\rm loc}^1$. Therefore the measure
$\nu = \digamma^{\ast} \Leb$ is absolutely continuous with respect to $\Leb$. Finally, $\nu$ is invariant by
$D^1 [\rho_1 ((\R^{\ast})^n \times \R^n)]$ since $\digamma$ conjugates $D^1 [\rho_1 ((\R^{\ast})^n \times \R^n)]$
to the group of translations of $\R^n$ and the latter preserves $\Leb$. In other words, the measure $\nu$
fulfils the conditions in the statement.
\end{proof}

We can now complete the proof of Proposition~\ref{uniquenesslebesgue}.

\begin{proof}[Proof of Proposition~\ref{uniquenesslebesgue}]
Consider the measure $\nu$ whose existence is ensured by Lemma~\ref{qinvariant1}.
The push-forward $H_{\ast} \nu$ of $\nu$ by $H$ is a sigma-finite measure on $\R^n$ that
happens to be invariant by ${\bf T}$, i.e. by all translations
of $\R^n$. Due to the standard uniqueness property of the Haar measure, it follows that
$H_{\ast} \nu$ must be a constant multiple of $\Leb$. Thus $H^{\ast} \Leb$ is again an absolute continuous measure
since it coincides with a
constant multiple of $\nu$. This means that $H$ is an absolutely continuous {\it homeomorphism}. Since
$\mu = H_{\ast} \leb$, with $H$ absolutely continuous, we conclude that $\mu$ must be absolutely continuous
with respect to $\Leb$. The proposition is proved.
\end{proof}

Here is an immediate corollary of Proposition~\ref{uniquenesslebesgue} that is worth stating in view of
the toy-theorem mentioned in the Introduction.

\begin{coro}
\label{forpsl2c}
Assume that $G$ is a dense subgroup of ${\rm PSL}\, (2, \C)$ projectively acting on $S^2$. Suppose that
$\mu$ is a probability measure on $S^2$ that happens to be quasi-invariant by closure of $G$ in ${\rm PSL}\, (2, \C)$.
Then $\mu$ is absolutely continuous.
\end{coro}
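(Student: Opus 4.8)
The plan is to deduce Corollary~\ref{forpsl2c} directly from Proposition~\ref{uniquenesslebesgue} by exhibiting, inside the closure $\overline{G}$ of $G$ in ${\rm PSL}\,(2,\C)$, a copy of the group $\Gamma_0$ built from a contracting fixed point together with a full group of translations. First I would fix a loxodromic element $g_0 \in {\rm PSL}\,(2,\C)$ and a point $p \in S^2$ that is its attracting fixed point; since $G$ is dense in ${\rm PSL}\,(2,\C)$ and the loxodromic elements form an open set, $G$ itself contains an element $f$ with an attracting fixed point $p_f$, and $\overline{G}$ (which here is literally all of ${\rm PSL}\,(2,\C)$, being closed and dense) contains every parabolic one-parameter subgroup. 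Working in the affine chart $\C \simeq S^2 \setminus \{\text{point at infinity}\}$ centred so that $p_f$ is the origin, the stabilizer of the point at infinity in ${\rm PSL}\,(2,\C)$ acts on $\C$ as the complex affine group, and its translation part ${\bf T} \cong \C \cong \R^2$ consists of genuine translations $z \mapsto z + B$ of $\R^2$. All of these translations lie in $\overline{G} = {\rm PSL}\,(2,\C)$, and $f$ has a contracting fixed point at the origin of this chart.

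Next I would invoke Proposition~\ref{uniquenesslebesgue}: the group $\Gamma_0$ generated by $f$ and ${\bf T}$ (a homomorphic image of $\Z{\bf T}$ via the representation $\rho_0$ of the preceding paragraph) is contained in $\overline{G}$, so any probability measure $\mu$ on $S^2$ quasi-invariant under $\overline{G}$ restricts to a probability measure on the affine chart that is quasi-invariant under $\Gamma_0$. Strictly speaking $\mu$ is a measure on $S^2$ rather than on $\R^2$, but since $\mu$ is quasi-invariant under the full translation group it can give no mass to the single point at infinity of this chart (nor to any point, $\overline{G}$ being uncountable and acting transitively), so $\mu$ restricted to $\R^2$ is, up to the null set $\{\infty\}$, still a probability measure and is quasi-invariant under $\Gamma_0$. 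Proposition~\ref{uniquenesslebesgue} then forces this restriction to be absolutely continuous with respect to Lebesgue measure on the chart.

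Finally I would globalize: absolute continuity in one affine chart gives absolute continuity of $\mu$ on $S^2 \setminus \{\infty\}$ with respect to the smooth area form, and since the single omitted point carries no $\mu$-mass and no Lebesgue mass, $\mu$ is absolutely continuous on all of $S^2$. (Alternatively one repeats the argument with a second loxodromic element whose fixed points avoid $\infty$, covering $S^2$ by two charts.) The only point requiring a little care — and the place where the hypothesis "quasi-invariant by the closure" rather than merely "by $G$" is used — is step one: one needs the \emph{full} two-real-parameter translation subgroup ${\bf T}$ of the affine chart inside the acting group, which is exactly what passing to $\overline{G} = {\rm PSL}\,(2,\C)$ supplies, whereas the dense countable subgroup $G$ contains only a dense but countable subset of these translations. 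Once the right $\Gamma_0 \subset \overline{G}$ is identified, everything else is a routine transfer of the conclusion of Proposition~\ref{uniquenesslebesgue} through the chart, so I do not anticipate any substantial obstacle beyond bookkeeping about the point at infinity.
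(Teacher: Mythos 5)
Your argument is correct and is essentially the paper's own proof: since $\overline{G}={\rm PSL}\,(2,\C)$, it contains the affine group ${\rm Aff}\,(\C)$ of an affine chart, hence a copy of $\Gamma_0$ (a contraction fixing the origin together with the full translation group ${\bf T}$), and Proposition~\ref{uniquenesslebesgue} then gives absolute continuity, with your chart/point-at-infinity bookkeeping being exactly what the paper leaves implicit. The only small slip is that if you insist on taking the contracting element $f$ from $G$ itself you must also choose the chart so that its repelling fixed point goes to infinity (otherwise $f$ does not preserve the chart); it is simpler to take $z\mapsto\lambda z$, $|\lambda|<1$, directly from ${\rm Aff}\,(\C)\subset\overline{G}$, as the paper does.
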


\begin{proof}
By assumption the closure of $G$ is all of ${\rm PSL}\, (2, \C)$. In particular it contains a copy of the affine
group ${\rm Aff}\, (\C)$ of the complex line. According to Proposition~\ref{uniquenesslebesgue},
a measure quasi-invariant by ${\rm Aff}\, (\C)$ must be absolutely continuous. The statement follows at once.
\end{proof}

For the sake of completeness, we also mention the proof of Linear Theorem stated in the Introduction.

\vspace{0.1cm}

\begin{proof}[Proof of Linear Theorem]
It is an immediate combination of Proposition~\ref{forpsl2c} with Theorem~D to be proved in Section~5.
\end{proof}

\subsection{Proof of Theorem~A}

In what concerns the existence of vector fields in the closure of groups, the main result obtained in
this paper is Theorem~\ref{goodvectorfields}. Once one vector field in the $C^{\infty}$-closure of $G$
does exist, further generic conditions imposed on the group $G$ allow us to construct additional
vector fields possessing similar properties. The choice of the desired ``generic'' conditions is rather flexible
and does not constitute the main issue in most applications. Here we shall proceed from a slightly more conceptual
point of view directly inspired by \cite{belliart-2}.

As before, we fix a local coordinate about a contracting fixed point of $F$ so that $F$ becomes the homothety
$(x_1, \ldots ,x_n) \mapsto (\lambda_1 x_1 , \ldots ,\lambda_n x_n)$. Consider the (pseudo-) Lie algebra
$\mathcal{G}$ of
analytic vector fields about $p$ that are in the $C^{\infty}$-closure of $G$. As it will be seen, this
Lie algebra can be {\it filtered}\, by means of $F$-homogeneous components. Concerning filtered and/or graded
Lie algebras, the reader is referred to \cite{demazure} for the relevant definitions and results used below.
To begin with, denote by $m^1 (F)> m^2 (F)>, \ldots$ the collection of absolute values of all complex numbers
having the form $\lambda_1^{\alpha_1} \ldots \lambda_n^{\alpha_n}/\lambda_l$, with $\alpha_i \in \N$ and $l \in
\{ 1, \ldots ,n\}$. Fixed $j \in \N^{\ast}$, denote by $\mathcal{V}^j$ the vector space spanned by all vector
monomials whose $F$-multipliers are equal to~$m^j (F)$. As already pointed out, $\mathcal{V}^j$ has finite dimension
as vector space for every~$j \in \N$. Next set $\mathcal{G}^j = \mathcal{G} \cap \mathcal{V}^j$. Owing to
Lemma~\ref{grading}, it follows that $\mathcal{G} = \bigoplus_{j \in \N^{\ast}} \mathcal{G}^j$ where each
$\mathcal{G}^j$ is a finite dimensional vector space. Next we claim that $[ X^{j_1},X^{j_2} ]$ lies in
$\mathcal{G}^{j_1+j_2}$ provided that $X^{j_1} \in \mathcal{G}^{j_1}$ and $X^{j_2} \in \mathcal{G}^{j_2}$.
This is an immediate consequence of the general formula $F^{\ast} [X,Y] = [F^{\ast} X, F^{\ast} Y]$ which,
in our case, yields $F^{\ast} [ X^{j_1},X^{j_2} ] = m^{j_1} (F) m^{j_2} (F) [ X^{j_1},X^{j_2} ]$, i.e.
$m^{j_1} (F) m^{j_2} (F)$ is the $F$-multiplier associated to $[ X^ {j_1},X^{j_2} ]$. An elementary counting
of possibilities then shows that the multiplier in question cannot appear before the position $j_1+j_2$
in the list $m^1 (F), m^2(F), \ldots$ what, in turn, establishes the claim.

Summarizing what precedes, we have proved the following.

\begin{prop}
\label{gradedalgebra}
The Lie (pseudo) algebra of analytic vector fields defined about $p$ and contained in the
$C^{\infty}$-closure of $G$ is graded.\qed
\end{prop}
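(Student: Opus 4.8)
The plan is to organize the observations made just before the statement into a bona fide grading. I would work in the fixed local coordinate near a contracting fixed point $p$ of $F$ in which $F$ is the homothety $(x_1,\dots,x_n)\mapsto(\lambda_1x_1,\dots,\lambda_nx_n)$ with $|\lambda_1|<\cdots<|\lambda_n|<1$. Recall that a vector-monomial $Z=x_1^{\alpha_1}\cdots x_n^{\alpha_n}\,\partial/\partial x_l$ satisfies $F^{\ast}Z=(\lambda_1^{\alpha_1}\cdots\lambda_n^{\alpha_n}/\lambda_l)\,Z$, so that its $F$-multiplier is $|\lambda_1^{\alpha_1}\cdots\lambda_n^{\alpha_n}/\lambda_l|$. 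The collection of all these numbers is discrete and accumulates only at $0$, since $|\lambda_1^{\alpha_1}\cdots\lambda_n^{\alpha_n}/\lambda_l|\le|\lambda_n|^{\alpha_1+\cdots+\alpha_n}/|\lambda_1|$ forces $\alpha_1+\cdots+\alpha_n$ to be bounded once the multiplier is bounded below; hence these values can be listed as $m^1(F)>m^2(F)>\cdots\to 0$. For each $j$ let $\mathcal{V}^j$ be the finite-dimensional space of polynomial vector fields spanned by the vector-monomials whose $F$-multiplier equals $m^j(F)$, and set $\mathcal{G}^j=\mathcal{G}\cap\mathcal{V}^j$, where $\mathcal{G}$ denotes the Lie pseudo-algebra of analytic vector fields about $p$ that lie in the $C^{\infty}$-closure of $G$.

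The first step, which is exactly the content of Lemma~\ref{grading}, is that the $F$-homogeneous components of any $X\in\mathcal{G}$ again belong to $\mathcal{G}$, hence to the appropriate $\mathcal{G}^j$; thus $\mathcal{G}=\bigoplus_j\mathcal{G}^j$, understood in the completed sense appropriate to analytic vector fields (a given element may have infinitely many nonzero homogeneous components). Theorem~\ref{goodvectorfields} guarantees that $\mathcal{G}$ is nontrivial, though this plays no role in the grading statement. The second step is to check that the bracket is compatible with this decomposition. For vector-monomials $Z_1=x^\alpha\,\partial/\partial x_l$ and $Z_2=x^\beta\,\partial/\partial x_m$, the identity $F^{\ast}[Z_1,Z_2]=[F^{\ast}Z_1,F^{\ast}Z_2]$ together with the formula above gives $F^{\ast}[Z_1,Z_2]=(\lambda^\alpha/\lambda_l)(\lambda^\beta/\lambda_m)\,[Z_1,Z_2]$; since the vector-monomials that survive in $[Z_1,Z_2]$ have non-negative exponents, each of them is a genuine vector-monomial whose $F$-multiplier is exactly $m^{j_1}(F)\,m^{j_2}(F)$ when $Z_1\in\mathcal{V}^{j_1}$ and $Z_2\in\mathcal{V}^{j_2}$. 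Hence $[\mathcal{V}^{j_1},\mathcal{V}^{j_2}]\subseteq\mathcal{V}^{j_3}$ with $m^{j_3}(F)=m^{j_1}(F)\,m^{j_2}(F)$, and intersecting with $\mathcal{G}$, which is closed under brackets as a Lie pseudo-algebra, yields $[\mathcal{G}^{j_1},\mathcal{G}^{j_2}]\subseteq\mathcal{G}^{j_3}$. This says precisely that $\mathcal{G}$ is graded, the grading being by the multiplicative semigroup generated by the $F$-multipliers (equivalently, after passing to $-\log$, by the additive semigroup of numbers $\alpha_1\mu_1+\cdots+\alpha_n\mu_n-\mu_l$ with $\mu_i=-\log|\lambda_i|$), in the sense of \cite{demazure}.

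The actual calculations involved — the explicit bracket of two vector-monomials, the verification that the product $m^{j_1}(F)\,m^{j_2}(F)$ again occurs among the listed multipliers, and the finite-dimensionality of each $\mathcal{V}^j$ — are routine and I would not spell them out. The only non-formal input is Lemma~\ref{grading}, which is already available, so there is no genuine obstacle. The one point that needs care is purely a matter of convention: pinning down exactly which semigroup carries the grading, and in what (completed) sense the decomposition $\mathcal{G}=\bigoplus_j\mathcal{G}^j$ is to be read, so that the phrase ``graded Lie pseudo-algebra'' is unambiguous.
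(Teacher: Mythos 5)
Your proposal is correct and follows essentially the same route as the paper: the same decomposition $\mathcal{G}^j=\mathcal{G}\cap\mathcal{V}^j$ into $F$-homogeneous pieces, Lemma~\ref{grading} to obtain $\mathcal{G}=\bigoplus_j\mathcal{G}^j$, and the identity $F^{\ast}[X,Y]=[F^{\ast}X,F^{\ast}Y]$ to check compatibility of the bracket with the decomposition. The only difference is bookkeeping: you grade by the (semi)group of $F$-multiplier values themselves, while the paper indexes by $\N$ and argues the product multiplier cannot occur before position $j_1+j_2$; your convention is, if anything, the cleaner formulation of the same argument.
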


So far, we have always worked with a subgroup $G \subset \diffM$ satisfying conditions~(1), (2), (3)
of Section~2. Let us now assume that condition~(4) is also satisfied. Then we have.

\begin{lema}
\label{transitivity}
Suppose that $G \subset \diffM$ satisfies conditions~(1) - (4) of Section~2. Then, for every $q \in M$,
there are vector fields $X_1, \ldots ,X_n$ in the $C^{\infty}$-closure of $G$ such that
$X_1 (q), \ldots ,X_n (q)$ form a basis for $T_qM$ (in particular these vector fields are defined about $q$).
\end{lema}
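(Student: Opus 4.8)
\textbf{Proof proposal for Lemma~\ref{transitivity}.}
The plan is to exhibit, at an arbitrary point $q\in M$, a full-rank family of vector fields in the $C^\infty$-closure of $G$, and the natural strategy is to work first near a contracting fixed point $p$ of $F$, where Theorem~\ref{goodvectorfields} and the grading of $\mathcal{G}$ from Proposition~\ref{gradedalgebra} give us strong structure, and then transport the conclusion to a general $q$ using conditions~(3) and~(4). So the first step is to consider the set
$$
\mathcal{R}=\{\, q\in M : \dim_{\R}\{X(q) : X\in\mathcal{G}_q\} = \dim M \,\},
$$
where $\mathcal{G}_q$ denotes the space of analytic vector fields defined near $q$ lying in the $C^\infty$-closure of $G$; equivalently, $M\setminus\mathcal{R}$ is the locus where the evaluation map $\mathcal{G}_q\to T_qM$ fails to be onto. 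Just as in Lemma~\ref{morevectorfield1}, where the vanishing locus of all closure vector fields was shown to be a proper $G$-invariant analytic subset (hence empty by condition~(3)), I would argue that $M\setminus\mathcal{R}$ is analytic and $G$-invariant; $G$-invariance is immediate since $G$ acts on its pseudo-Lie algebra by pull-backs and pull-back by a diffeomorphism preserves the rank of the evaluation map. So the whole lemma reduces to showing $\mathcal{R}\neq\emptyset$, i.e.\ that at some single point the closure vector fields span the tangent space.

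The heart of the argument is therefore to prove that the contracting fixed point $p$ of $F$ lies in $\mathcal{R}$, and this is where condition~(4) (no invariant regular $C^\infty$-foliation) must enter. By Lemma~\ref{morevectorfield1} we already have at least one $X\in\mathcal{G}$ with $X(p)\neq 0$, so the distribution $D_q=\mathrm{span}_{\R}\{X(q):X\in\mathcal{G}_q\}$ is non-trivial near $p$. Suppose for contradiction that $\dim D_q < n$ on an open set. Using the grading $\mathcal{G}=\bigoplus_j\mathcal{G}^j$ and the bracket relation $[\mathcal{G}^{j_1},\mathcal{G}^{j_2}]\subset\mathcal{G}^{j_1+j_2}$, the pseudo-Lie algebra $\mathcal{G}$ is involutive, so $D$ is an involutive distribution wherever it has locally constant rank; shrinking to an open set where the rank is constant and maximal, Frobenius' theorem produces a regular $C^\infty$-foliation tangent to $D$ on that open set. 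The point is then to promote this to a $G$-invariant foliation: the open set $U'$ on which the rank of $D$ achieves its generic (maximal) value is $G$-invariant up to the analytic-subset ambiguity, and since $\mathcal{G}$ is $G$-invariant, $D$ is $G$-equivariant, hence so is the foliation it integrates. One then invokes condition~(3) to see that the complement of the good open set is a proper invariant analytic set and hence, after the usual argument (cf.\ Lemma~\ref{vectorfield5}), can be pushed out of the way, so that we genuinely obtain a $G$-invariant regular foliation on a $G$-invariant open set — contradicting condition~(4). Therefore the generic rank of $D$ is $n$, i.e.\ $\mathcal{R}$ is a non-empty (open) set, and combined with the first paragraph this forces $\mathcal{R}=M$.

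I expect the main obstacle to be the bookkeeping around \emph{non-constant rank}: the distribution $D_q$ spanned by evaluations of closure vector fields need not have constant rank, so Frobenius does not apply globally, and one must carefully isolate the open dense stratum where the rank is maximal, check that $M$ minus this stratum is a proper analytic (hence measure-zero, nowhere dense) $G$-invariant set, and verify that the foliation obtained there is genuinely $G$-invariant in the sense of condition~(4) rather than merely invariant on a smaller piece. A secondary technical point is ensuring the foliation is $C^\infty$-regular in the precise sense demanded by condition~(4); this is where one uses that the $X_i$ are analytic (so $D$ is spanned by analytic sections on the good stratum) together with the fact that $\mathcal{G}$ is closed under $C^\infty$-limits, which keeps everything inside the closure of $G$. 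Once the contradiction with condition~(4) is in place, the descent from ``$\mathcal{R}$ open and non-empty'' to ``$\mathcal{R}=M$'' is exactly the invariant-analytic-set argument already used twice in the paper, so no new idea is needed there.
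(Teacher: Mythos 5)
Your argument is correct and follows essentially the same route as the paper: a maximal-rank distribution spanned by closure vector fields, involutivity coming from the pseudo-Lie algebra structure, a Frobenius-integrated $G$-invariant foliation contradicting condition~(4) if the generic rank were less than $n$, and condition~(3) to dispose of the rank-drop locus (a proper $G$-invariant analytic set) and hence get full rank at every point. The only cosmetic difference is organizational — the paper first reduces to a contracting fixed point of $F$ via Lemma~\ref{vectorfield5} and treats the singular set of the foliation directly, whereas you fold both reductions into the invariant-analytic-set argument — but the ideas and their use of conditions~(3)--(4) coincide.
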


\begin{proof}
In view of Lemma~\ref{vectorfield5}, it suffices to prove the lemma for a contracting fixed point
$p$ of $F$. Therefore let us suppose for a contradiction that the statement is false. Let $X_1, \ldots ,X_k$
be a {\it maximal}\, set of analytic vector fields in the $C^{\infty}$-closure of $G$ about $p$ that happen
to be linearly independent at some point near $p$. Proposition~\ref{goodvectorfields} ensures that $k \geq 1$.
Furthermore the (possibly singular) plane distribution $\mathcal{D}$ associated to $X_1, \ldots ,X_k$ is
integrable since vector fields in the closure of $\mathcal{G}$ form a Lie algebra and $k$ is maximal.

\noindent {\it Claim}. We have $k=n$.

\noindent {\it Proof of the Claim}. Suppose we had $k <n$ (strictly). The maximality of $k$ implies that
$\mathcal{D}$ must be fully invariant by $G$ and hence it induces a singular foliation $\mathcal{F}$ on
the open set $U \subset M$ corresponding to the $G$-orbit of some small neighborhood of $p$. Next note that
$\mathcal{F}$ is non-singular otherwise its singular set would be a proper analytic set of $M$ invariant
by $G$ what is impossible in view of condition~(3). Nonetheless the existence of a non-singular
foliation as above contradicts condition~(4). We then conclude that $k$ cannot be smaller than~$n$ proving
the statement.\qed

Now consider vector fields $X_1, \ldots ,X_n$ that are contained in the $C^{\infty}$-closure of $G$ on
a neighborhood of $p$ and that are linearly independent at generic points in this neighborhood. If the
vector fields $X_1, \ldots ,X_n$ can be chosen linearly independent
at $p$, then the lemma is proved. Suppose that this is not the case. Therefore the non-empty set of
points $q$ in the $G$-invariant set $U \subset M$
at which the rank of the subspace of $T_qM$ spanned by vector fields in the Lie algebra in question is less
than~$n$ constitutes a proper analytic subset of $M$ invariant by $G$. Again this
contradicts condition~(3). The lemma is proved.
\end{proof}

\begin{lema}
\label{localliegroup}
With the preceding notations, suppose that $G \subset \diffM$ satisfies conditions~(1) - (5) of Section~2.
Then the Lie algebra $\mathcal{G}$ consisting of analytic vector field in the $C^{\infty}$-closure
of $G$ about a contracting fixed point $p$ contains all analytic vector fields defined about $p$.
\end{lema}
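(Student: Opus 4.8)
The plan is to show that the graded Lie algebra $\mathcal{G}$, which by Lemma~\ref{transitivity} already acts transitively near $p$ (i.e. $\mathcal{G}^j$'s span all of $T_pM$), must in fact exhaust the full Lie algebra of germs of analytic vector fields at $p$, and then invoke condition~(5) to rule out the only alternative. First I would set up the dichotomy: either $\mathcal{G}$ coincides with the algebra of all analytic germs at $p$, or $\mathcal{G}$ is a proper Lie subalgebra which is still transitive. In the latter case the pseudogroup generated by $\mathcal{G}$ is a transitive pseudogroup of local transformations, so by the classical theory of transitive (pseudo-)Lie algebras — Cartan's classification of transitive pseudogroups, or more elementarily the fact that a proper transitive $\R$-Lie subalgebra of germs sits inside a finite-dimensional one whenever it is not the whole thing (this is where the non-resonance / diagonal form of $F$ is used to keep the grading under control) — one obtains that $G_U$ is conjugate to a sub-pseudogroup of the pseudogroup generated by a finite-dimensional Lie group about $p$. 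That directly contradicts condition~(5). Hence $\mathcal{G}$ must be everything.

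The key steps, in order, are: (i) record that $\mathcal{G} = \bigoplus_j \mathcal{G}^j$ is graded (Proposition~\ref{gradedalgebra}) and transitive at $p$ (Lemma~\ref{transitivity}); (ii) observe that since $\mathcal{G}$ is closed under Lie bracket and $G$ acts on $\mathcal{G}$ by pull-back, the local pseudogroup $\Gamma$ it generates contains (a conjugate of) $G_U$; (iii) suppose $\mathcal{G}$ is a proper subalgebra and derive that $\Gamma$ is a transitive pseudogroup strictly smaller than the full analytic pseudogroup, so by the structure theory of transitive Lie pseudogroups $\Gamma$ — and with it $G_U$ — is (conjugate to a sub-pseudogroup of) the one generated by a finite-dimensional local Lie group about $p$; (iv) conclude by contradiction with condition~(5). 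A clean way to organize (iii)–(iv) is to note that the grading by $F$-multipliers, together with the fact that only finitely many multipliers exceed any fixed bound, forces the "low-order" part $\bigoplus_{j \le j_0} \mathcal{G}^j$ to be finite-dimensional for each $j_0$; so if $\mathcal{G}$ is proper, one gets a finite-dimensional invariant piece capturing the germ of the action, which is exactly the conjugacy to a finite-dimensional Lie group that condition~(5) forbids.

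The main obstacle I expect is step (iii): making precise the statement that a proper, closed, graded, transitive Lie (pseudo)algebra of analytic germs at a point is necessarily finite-dimensional, or at least that $G_U$ is then conjugate to a finite-dimensional local Lie group. The fully general statement for arbitrary transitive pseudogroups is delicate (Cartan–Singer–Sternberg theory), but in the present situation one should be able to bypass the heavy machinery: the vector fields in $\mathcal{G}$ are polynomial in the linearizing coordinates of $F$ when they are $F$-homogeneous (as established before Lemma~\ref{grading}), so $\mathcal{G}$ is built out of finitely-generated graded pieces, and the relevant primitivity/finiteness argument can be run by hand using that the degrees of the homogeneous components with multiplier above a threshold are bounded (Claim~1 in the proof of Theorem~\ref{goodvectorfields}). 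The remaining care is to check that "conjugate to a sub-pseudogroup generated by a finite-dimensional Lie group" matches verbatim the hypothesis negated in condition~(5), including that the conjugating map is a local $C^\infty$-diffeomorphism fixing $p$ — which it is, being the linearization coordinate of $F$ composed with whatever conjugacy the structure theory provides.
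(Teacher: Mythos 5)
There is a genuine gap at your step (iii). The assertion you lean on --- that a proper, closed, graded, transitive Lie (pseudo)algebra of analytic germs at $p$ is finite-dimensional, or at least sits inside a finite-dimensional one --- is false: the algebras of divergence-free, Hamiltonian, or contact vector fields are proper, transitive, infinite-dimensional, closed, and (in the linearizing coordinates of $F$) invariant under pull-back by the diagonal contraction, hence graded by $F$-multipliers exactly as $\mathcal{G}$ is. So from mere properness of $\mathcal{G}$ you cannot reach a contradiction with condition~(5); the infinite-dimensional proper possibilities must be excluded by a different mechanism. Your proposed ``by hand'' fix does not repair this: the bounded-degree statement (Claim~1 in the proof of Theorem~\ref{goodvectorfields}) only controls the components whose multipliers exceed $1/\vert\lambda_1\vert$, and the truncation $\bigoplus_{j\le j_0}\mathcal{G}^j$ is not a Lie subalgebra (brackets of low-degree fields produce higher-degree fields, i.e.\ smaller multipliers), so no finite-dimensional invariant piece ``capturing the germ of the action'' is produced.

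For comparison, the paper's dichotomy is on the dimension of $\mathcal{G}$, not on properness. In the infinite-dimensional case it argues that $\mathcal{G}$ is transitive, graded (Proposition~\ref{gradedalgebra}, Lemma~\ref{transitivity}) and irreducible, and then invokes Cartan's classification (via the Demazure reference) to conclude that $\mathcal{G}$ must be the full algebra of analytic germs; condition~(5) plays no role there. In the finite-dimensional case it does not treat the passage to a Lie group as automatic: it lets $G_U$ act on $\mathcal{G}\simeq\mathcal{L}$ by pull-backs, realizes the resulting pseudogroup inside the finite-dimensional Lie group ${\rm Aut}\,(\mathcal{L})$, passes to closures, and thereby exhibits a local conjugacy of $G$ about $p$ to a finite-dimensional Lie group, which is what contradicts condition~(5). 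Your outline both misplaces where condition~(5) enters (you invoke it against all proper subalgebras, including infinite-dimensional ones it cannot rule out) and omits the ${\rm Aut}\,(\mathcal{L})$ construction that actually produces the conjugacy condition~(5) forbids.
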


\begin{proof}
Note that $\mathcal{G}$ is a graded Lie algebra in view of Lemma~\ref{grading}. It is also transitive
thanks to Lemma~\ref{transitivity}. If the dimension of $\mathcal{G}$ can be shown to be infinite, then it will
also follows that $\mathcal{G}$ is irreducible. These Lie algebras were classified by
E. Cartan (note that vector fields in $\mathcal{G}$ are analytic). From his classification it results that
$\mathcal{G}$ has to consist of all analytic vector fields about $p$ since the remaining possibilities
have finite dimension. The lemma will then be proved.

Let us then suppose for a contradiction that the dimension of $\mathcal{G}$ is finite.
We also fix a small neighborhood $U$ of $p$ and denote by $G_U$ the pseudogroup of local diffeomorphism
defined on $U$ by restrictions of elements in $G$. Consider now the abstract finite dimensional Lie algebra
$\mathcal{L}$ isomorphic to $\mathcal{G}$. Every element $g$ in $G_U$ acts linearly on $\mathcal{G}$ by pull-backs
and, therefore, can be identified to an element of the automorphism group ${\rm Aut}\, (\mathcal{L})$ of
$\mathcal{L}$. Now, note that ${\rm Aut}\, (\mathcal{L})$ is itself a Lie group of finite dimension and the
pseudogroup $\Gamma_U$ associated to the image of $G_U$ in ${\rm Aut}\, (\mathcal{L})$
can be considered. Similarly we denote by $\Gamma$ the group arising from the global realization of $\Gamma_U$.
However, by construction, there is a local action of $\Gamma_U$ on $U$.
Furthermore, this local action can naturally be extended to the closure $\overline{\Gamma}_U$ of $\Gamma_U$ in
${\rm Aut}\, (\mathcal{L})$. However $\overline{\Gamma}_U$ is contained in the closure $\overline{\Gamma}$
of $\Gamma$ and $\overline{\Gamma}$ is, in its own right, a finite dimensional Lie group since it is a closed
subgroup of ${\rm Aut}\, (\mathcal{L})$. Assembling all this information, we conclude that $G$ is locally conjugate
about $p$ to a finite dimensional Lie group. This however contradicts condition~(5) and establishes the
lemma.
\end{proof}

\begin{proof}[Proof of Theorem~A] Suppose that $\mu$ is a probability measure on $M$ quasi-invariant
under the $C^{\infty}$-close of $G$. Standard arguments based on Lemma~\ref{transitivity} makes it clear that
the orbit under $G$ of
every point $q \in M$ is dense in $M$. Similarly, $G$ is ergodic with respect to the Lebesgue
measure on $M$. So, from now on, we assume for a contradiction that $\mu$ is singular with respect to the
Lebesgue measure on $M$. Since $G$ is ergodic, there is a Borel set $\calb \subset M$ having null Lebesgue
measure and satisfying $\mu (\calb) =1$. Besides, if $U$ is a non-empty open subset of $M$, then
$\mu (\calb \cap U) >0$ since $M$ is covered by the $G$-orbit of $U$.

Consider now a contracting fixed point $p$ for $F$. Let $U$ be a small neighborhood of $p$ identified to all of
$\R^n$ through some local $C^{\infty}$-coordinate $\varphi$. Modulo normalizing the restriction of $\mu$
to $U$, we can assume that $U$ is equipped with a probability measure $\nu$ which is quasi-invariant
by the pseudogroup $\overline{G}_U$ induced by the restriction to $U$ of all elements in the $C^{\infty}$-closure
of $G$. By setting $\calb' = \calb \cap U$, we have $\nu (\calb')=1$ whereas the Lebesgue measure of $\calb'$
equals~{\it zero}.

By means of the coordinate $\varphi$, $\overline{G}_U$ becomes identified to a pseudogroup of $C^{\infty}$-diffeomorphisms
de $\R^n$ containing the contraction corresponding to $F$ as well
as the whole translation group ${\bf T}$, cf. Lemma~\ref{localliegroup}.
It follows from Proposition~\ref{uniquenesslebesgue} that $\nu$ must be absolute continuous since it is quasi-invariant
by the latter pseudogroup. The resulting contradiction establishes the theorem.
\end{proof}

Now we have:

\begin{proof}[Proof of Corollary~B]
Let $\mu$ be as in the statement and consider a
Borel set $\calb \subset S^1$ such that $\mu (\calb) >0$. Since $\mu$ is supposed to be a
$d$-quasi-volume for $G$, there is a uniform constant $C >0$ such that
\begin{equation}
\mu (g (\calb)) \geq C \int_{\calb} \Vert {\rm Jac}\, [Dg] (x) \Vert^d \, d\mu \label{corollaryb}
\end{equation}
for every $g \in G$.
Next let $\overline{g} : U \subseteq M \rightarrow M$ be a diffeomorphism in the
closure of $G$ and defined on an open set $U$ containing $\calb$. To conclude that $\mu$
is quasi-invariant by elements in the $C^{\infty}$-closure of $G$, we need to show that
$\mu (\overline{g} (\calb)) > 0$.
For this, fix a sequence $\{ g_i \}$
of actual elements in $G$ whose restrictions $g_{i \vert_U}$ to $U$ converge in the $C^{\infty}$-topology
to $\overline{g}$ (actually $C^1$-convergence would be enough in the sequel). It follows, in particular, that
$\mu (\overline{g} (\calb))
= \lim_{i \rightarrow \infty} \mu (g_{i \vert_U} (\calb))$. Equation~(\ref{corollaryb}) and the
{\it uniform}\, convergence of ${\rm Jac}\, [Dg_{i \vert_U}]$ towards ${\rm Jac}\, [D\overline{g}]$
yields the estimate
$$
\mu (\overline{g} (\calb)) = \lim_{i \rightarrow \infty} \mu (g_{i \vert_U} (\calb))
\geq C \lim_{i \rightarrow \infty} \int_{\calb} \Vert {\rm Jac}\, [Dg_{i \vert_U}] \Vert^d d\mu = C
\int_{\calb} \Vert {\rm Jac}\, [D\overline{g}] \Vert^d d\mu \, .
$$
The last integral is clearly strictly positive since $\overline{g}$ is a diffeomorphism onto its image.
Hence $\mu (\overline{g} (\calb)) > 0$ and thus $\mu$ is quasi-invariant by elements in the $C^{\infty}$-closure
of $G$. In view of Theorem~A, we conclude that $\mu$ is absolutely continuous.
\end{proof}

Similarly, let us now prove Corollary~C.

\begin{proof}[Proof of Corollary~C]
The foliations $\mathcal{F}$ on $\C P(n+1)$ constructed in \cite{frankandI} are such that there exists a
local transverse section $\Sigma$ satisfying the following conditions
(cf. Theorem~9.3 in \cite{frankandI}):
\begin{enumerate}
  \item The section $\Sigma$ is isomorphic to a poly-disc in $\C^{n}$.
  \item Every (regular) leaf $L$ of $\mathcal{F}$ intersects $\Sigma$
  \item The pseudogroup $G_{\Sigma}$ of local diffeomorphisms of $\Sigma$ induced by the holonomy pseudogroup of $\mathcal{F}$
  on $\Sigma$ has ``large affine part'', i.e. in suitable coordinates its closure contains a copy of ${\rm SL}\, (n ,\C) \ltimes
  \C^n$.
\end{enumerate}
For the reasons already explained, every transverse $d$-quasi-volume for $\mathcal{F}$ must induce a finite (non-zero)
measure $\mu$ on $\Sigma$ that will be a $d$-quasi-volume for $G_{\Sigma}$. In particular, $\mu$ must be quasi-invariant
for the closure of $G_{\Sigma}$ and hence, in suitable coordinates, by the natural action
of ${\rm SL}\, (n ,\C) \ltimes \C^n$. Proposition~\ref{uniquenesslebesgue} then shows that $\mu$ must be absolutely
continuous. Corollary~C is proved.
\end{proof}

\section{The rigidity phenomenon}

Let us begin the discussion of the rigidity phenomenon associated to groups as above
by proving Theorem~D. Let $G_1, G_2$ be subgroups of $\diffM$ as in the statement of Theorem~D.
Denote by $H$ a continuous orbit equivalence between the actions of $G_1, \, G_2$. According to the main
result of \cite{david}, the homeomorphism $H$ is equivariant and hence it constitutes a topological
conjugacy between the actions in question. In fact, the only use made of condition~(6) in Section~2 is to allow us
to resort to the mentioned theorem of Fisher and Whyte.

Thus we have a topological conjugacy $H$ between the actions of $G_1, \, G_2$, in other words,
given $g \in G_2$, the homeomorphism $H^{-1} \circ g \circ H$ actually coincides with an element
of $G_1$. Theorem~D can then be rephrased as follows.

\begin{teo}
\label{toprigidity}
Let $G_1, G_2$ be as in the statement of Theorem~D. Every homeomorphism $H : M \rightarrow M$
conjugating the dynamics of $G_1$ and $G_2$ coincides with an element of $\diffM$.
\end{teo}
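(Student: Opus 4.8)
The plan is to transport, through $H$, the ``closure'' of the linear group $G_1$ onto the $G_2$-side, so as to produce, near a contracting fixed point of the Morse--Smale element, enough transitive analytic structure to force $H$ to be analytic there; the built-in difficulty is that conjugation by the mere homeomorphism $H$ is continuous only in the $C^0$-topology, so regularity must be re-gained afterwards by a renormalisation argument at the hyperbolic fixed point. \emph{Set-up.} Recall from the discussion preceding the statement that $H$ is a genuine topological conjugacy, $HG_1H^{-1}=G_2$. Since $G_1$ is ``linear'', $\mathcal{L}:=\overline{G_1}$ is a finite dimensional Lie group acting analytically on $M$, with Lie algebra $\mathfrak{l}$; because $H$ carries the dense $G_2$-orbits (Lemma~\ref{transitivity}) to the $G_1$-orbits, $\mathcal{L}$ has a dense, hence open, orbit, and (discarding if necessary a proper invariant analytic subset by condition~(3) transported) we may assume $\mathcal{L}$ acts transitively on $M$. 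For $Y\in\mathfrak{l}$ the analytic flow $\exp(tY)$ is a limit of elements of $G_1$, so $\Psi_Y^{t}:=H\circ\exp(tY)\circ H^{-1}$ is a one-parameter group of homeomorphisms of $M$, each element of which lies in the $C^0$-closure of $G_2$. Let $F\in G_2$ be the Morse--Smale diffeomorphism of condition~(2), $p$ one of its non-resonant contracting fixed points, and $q:=H^{-1}(p)$; then $g_1^{0}:=H^{-1}\circ F\circ H\in G_1$ fixes $q$ and is a topological contraction there, whence --- being an element of the Lie group $\mathcal{L}$ --- an honest hyperbolic analytic contraction, analytically linearisable near $q$. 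It suffices to prove that $H$ is analytic on a neighbourhood of each such $p$, since by Lemma~\ref{vectorfield5} and condition~(3) the $G_2$-translates of such neighbourhoods cover $M$.

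\emph{The common local picture.} On the $G_2$-side, Theorem~\ref{goodvectorfields} together with Lemma~\ref{transitivity} shows that the pseudo-Lie algebra $\mathcal{G}$ of analytic vector fields in the $C^\infty$-closure of $G_2$ near $p$ is transitive; it is graded by the $F$-multipliers (Proposition~\ref{gradedalgebra} and Lemma~\ref{grading}), and condition~(4) prevents it from being imprimitive. Cartan's classification of transitive primitive Lie (pseudo-)algebras of analytic vector fields --- invoked exactly as in the proof of Lemma~\ref{localliegroup}, but without appealing to condition~(5) --- then shows that, in suitable analytic coordinates centred at $p$, the algebra $\mathcal{G}$ contains the full translation group ${\bf T}$ of $\R^{n}$; this holds in every case of the list, the infinite dimensional models as well as the finite dimensional ``affine/projective'' ones. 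The same applies to $\mathfrak{l}$ near $q$. Hence, both near $p$ and near $q$, besides a hyperbolic contraction ($F$, resp.\ $g_1^{0}$) we have a faithful transitive analytic action of $({\bf T},+)\cong\R^{n}$.

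\emph{Absolute continuity of $H$.} Normalising the restriction of $H_{\ast}\Leb$ to a small neighbourhood $U$ of $p$ yields a probability measure $\mu$ on $U$. As $H$ conjugates $G_1$ (which preserves the Lebesgue class) to $G_2$, the measure $\mu$ is quasi-invariant under $G_2$; arguing as in the proof of Corollary~B (uniform convergence of Jacobians of approximants) it is quasi-invariant under the $C^\infty$-closure of $G_2$ as well, in particular under ${\bf T}$ and $F$. Proposition~\ref{uniquenesslebesgue} then forces $\mu$ to be absolutely continuous, i.e.\ $H^{-1}$ sends $\Leb$-null subsets of $U$ to $\Leb$-null sets; covering $M$ by $G_2$-translates of $U$, the same holds globally. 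Applying this argument to $H^{-1}$, to $g_1^{0}\in G_1$ and to the translations of $\mathcal{L}$ near $q$, we get that $H$ is absolutely continuous as well.

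\emph{From $C^{0}$ to $C^{\omega}$: the main obstacle.} Absolute continuity by itself does not yield analyticity --- already for a single linear contraction of $\R$ there exist non-smooth absolutely continuous conjugacies --- so the heart of the proof is to upgrade the transported translations $\Psi_{v}^{t}=H\circ\tau_{v}^{t}\circ H^{-1}$, where $\tau_{v}^{t}$ runs over the translation group of $\mathcal{L}$ near $q$, from topological objects to flows of analytic vector fields. The leverage is the intertwining with the contraction: in the linearising coordinates near $p$ one has $F\circ\Psi_{v}^{t}\circ F^{-1}=\Psi_{Av}^{t}$, where $A$ is the linear part of $g_1^{0}$. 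Iterating and renormalising by the contractions $F^{k}$, in the spirit of the proof of Proposition~\ref{vectorfield3} and using the grading of $\mathcal{G}$, together with the absolute continuity established above (which keeps the renormalised maps and their Jacobians from degenerating), one should be able to show that each $\Psi_{v}^{t}$ is the flow of an analytic vector field $Z_{v}$ lying in the genuine $C^{\infty}$-closure of $G_2$. Granting this, the $Z_{v}$ form an $n$-dimensional abelian family of analytic vector fields spanning $T_{x}M$ at every $x$ near $p$, so $v\mapsto\Psi_{v}(p)$ is an analytic chart near $p$; since $v\mapsto\tau_{v}(q)$ is an analytic chart near $q$ and $H(\tau_{v}(q))=\Psi_{v}(H(q))=\Psi_{v}(p)$, the homeomorphism $H$ becomes the identity read in these two analytic charts, hence is analytic near $p$ --- and therefore, by the reduction of the first paragraph, on all of $M$. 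I expect this renormalisation step --- regaining smoothness for the $C^{0}$-limits $\Psi_{v}^{t}$, for which $H$ carries no a priori regularity --- to be the delicate point; the absolute-continuity statement is precisely the tool one would invoke to control the renormalised objects.
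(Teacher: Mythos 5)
Your strategy transports the one-parameter subgroups of $\overline{G_1}$ through $H$ onto the $G_2$-side and then tries to regain smoothness there; the paper does exactly the opposite, and the difference is not cosmetic. The step you yourself flag as delicate --- showing that $\Psi_v^t=H\circ\tau_v^t\circ H^{-1}$, a priori only a $C^0$-limit of elements of $G_2$, is the flow of an analytic vector field in the $C^{\infty}$-closure of $G_2$ --- is a genuine gap, and nothing in your argument fills it. Renormalising $\Psi_v^t$ by $F^k$ in the spirit of Proposition~\ref{vectorfield3} requires uniform control of $C^r$-norms of the maps being renormalised (this is what the Taylor estimates and Lemma~\ref{vectorfield4} provide for actual elements of $G_2$); for the homeomorphisms $\Psi_v^t$ there are no derivatives to control at all, and absolute continuity of $H$ yields only $L^1_{\rm loc}$ Jacobians, not the equicontinuity needed for Ascoli--Arzela or Montel. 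Your absolute-continuity step is itself problematic: for $\mu=H_{\ast}\Leb$ the Radon--Nikodym derivatives $d(g^{\ast}\mu)/d\mu$, $g\in G_2$, are not comparable with $\Vert {\rm Jac}\,[Dg]\Vert^d$, so the Corollary~B argument for passing quasi-invariance from $G_2$ to its closure does not apply --- and even granting absolute continuity, it produces no differentiability, as you note. In effect you have run into precisely the open problem recorded at the end of Section~5 (does $C^0$-convergence imply $C^{\infty}$-convergence?), which is the very reason the ``linear'' hypothesis is imposed on $G_1$. (The side claims that $g_1^0=H^{-1}\circ F\circ H$ is hyperbolic and analytically linearisable, and that Cartan's classification forces the translations into the closure of $G_2$ without condition~(5), are also unjustified, but they are not where the proof breaks.)

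The paper's proof avoids the obstruction by transporting in the other direction. It starts from a sequence $h_i^2\in G_2$ furnished by Proposition~\ref{vectorfield3}, whose renormalised differences converge in $C^1$ to an analytic vector field $X^2$ in the closure of $G_2$, and sets $h_i^1=H^{-1}\circ h_i^2\circ H\in G_1$. These converge to the identity only uniformly, but because $G_1$ lies in a finite-dimensional Lie group, $C^0$-convergence upgrades automatically to $C^{\infty}$-convergence together with the estimate required by Proposition~\ref{vectorfield1} (via Lemma~\ref{vectorfield4}), so the renormalised differences of the $h_i^1$ converge to an analytic $X^1$ which is conjugated to $X^2$ by $H$ (Proposition~\ref{propfortoprigidity}). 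Acting by elements of $G_1$ on $X^1$ and by the $H$-conjugate elements of $G_2$ on $X^2$ then produces $n$ pairs of $H$-synchronised vector fields framing the tangent spaces, and reading $H$ in the two flow-box charts makes it the identity, hence analytic. Your closing chart argument coincides with the paper's; what is missing is the construction of the synchronised vector fields, and with your direction of transport that construction cannot be completed with the tools available here.
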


Yet, Theorem~\ref{toprigidity} bears a fundamental difference from the analogous ``topological
rigidity theorems'' proved, for example, in \cite{rebelo-ENS}, \cite{belliart-2} and \cite{rebelo-BBMS}. Namely,
the present statement makes no assumption on $H$ being ``close to the identity'' (though we assume that
$G_1$ acts ``linearly'' on $M$). More precisely, if $g_2 \in G_2$ is an element (say belonging to a fixed
generating set for $G_2$) that happens to be ``close to the identity'' (for a chosen topology), nothing ensures
that $g_1 \in G_1$ given by $g_1 = H^{-1} \circ g_2 \circ H$ is ``close to the identity'' as well.

The key point for all these rigidity theorems consists of constructing vector fields {\it conjugated by $H$}\,
(i.e. the vector fields are synchronized by $H$). Recall that two (local) analytic vector fields $X^1, X^2$ are
said to be conjugated by $H$ if the equation
\begin{equation}
H \circ \phi_{X^1}^t (x) = \phi_{X^2}^t (H(x)) \label{synchronization}
\end{equation}
is satisfied whenever both sides are defined, where $\phi_{X^1}^t, \phi_{X^2}^t$ denote the local flows
respectively associated to $X^1, X^2$. In fact, the fundamental result for the proof of Theorem~D is
the following proposition:

\begin{prop}
\label{propfortoprigidity}
Fix a point $q \in M$. Then there are analytic vector fields $X^1, X^2$ with $X^1$ defined about $q$ and
$X^2$ defined about $H(q)$ such that the conditions below are satisfied.
\begin{enumerate}

\item $X^1$ (resp. $X^2$) is contained in the $C^1$-closure of $G_1$ (resp. $G_2$).
Besides $X^1 (q) \neq 0$ (resp. $X^2 (H(q)) \neq 0$).

\item If $\phi_{X^1}^t$ (resp. $\phi_{X^2}^t$) denotes the local flow associated to $X^1$ (resp. $X^2$),
then Equation~(\ref{synchronization}) holds whenever both sides are defined.
\end{enumerate}
\end{prop}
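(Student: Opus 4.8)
The plan is to reduce the construction of the synchronized vector fields to two essentially independent ingredients: the existence of enough vector fields in the closure of $G_2$ near $H(q)$, which is already supplied by the material of Sections~3 and~4, and a rigidity property of the ``linear'' group $G_1$, namely that a sequence of elements of $G_1$ converging uniformly on an open set automatically converges, in the $C^{\omega}$-topology, to an element of $\overline{G_1}$. Once these are available, the homeomorphism $H$ does the rest by transport of structure.

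First I would fix the vector field on the $G_2$-side. Since $G_2$ satisfies conditions~(1), (2) and~(3) of Section~2, Theorem~\ref{goodvectorfields} (through Proposition~\ref{vectorfield3} and Lemma~\ref{vectorfield5}) produces a finite open cover of $M$ on each piece of which there is a nowhere-vanishing analytic vector field lying in the $C^{\infty}$-closure of $G_2$; choosing the piece containing $H(q)$ gives an analytic $X^2$, defined about $H(q)$, with $X^2(H(q)) \neq 0$, contained in the $C^{\infty}$-closure, hence in the $C^1$-closure, of $G_2$. For each sufficiently small $t$, fix a neighborhood $\widetilde V_t$ of $H(q)$ on which $\phi_{X^2}^t$ is defined and choose elements $g_i = g_i(t) \in G_2$ whose restrictions to $\widetilde V_t$ converge in the $C^1$-topology, a fortiori uniformly, to $\phi_{X^2}^t$. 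Because $H$ is a topological conjugacy, the maps $f_i(t) := H^{-1} \circ g_i(t) \circ H$ belong to $G_1$, and on $H^{-1}(\widetilde V_t)$ they converge uniformly to $\psi_t := H^{-1} \circ \phi_{X^2}^t \circ H$.

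Next I would invoke the ``linear'' hypothesis on $G_1$. Since $\overline{G_1}$ is locally compact in $\diffM$, uniform convergence of the $f_i(t)$ on an open set prevents escape to infinity and forces $C^{\omega}$-subconvergence; the limit agrees with $\psi_t$ on $H^{-1}(\widetilde V_t)$ and, by analytic continuation on the connected manifold $M$, is uniquely determined by this agreement. Call it $\bar f_t \in \overline{G_1}$: it is an analytic diffeomorphism of $M$ with $H \circ \bar f_t = \phi_{X^2}^t \circ H$ wherever the right-hand side is defined. From the cocycle identity for $\phi_{X^2}^t$ and the bijectivity of $H$ one gets $\bar f_{s+t} = \bar f_s \circ \bar f_t$ and $\bar f_0 = \mathrm{id}$, so $t \mapsto \bar f_t$ is a local one-parameter subgroup of $\overline{G_1}$; its continuity in $t$ follows from the $C^0$-continuity of $t \mapsto \phi_{X^2}^t$ on a fixed domain together with the same compactness-and-analytic-continuation argument. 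A continuous local one-parameter subgroup of the Lie group $\overline{G_1}$ extends to a global one, so $\bar f_t = \exp(t X^1)$ for a unique element $X^1$ of the Lie algebra of $\overline{G_1}$, realized as an analytic vector field on $M$. As $\bar f_t \in \overline{G_1}$ is a $C^{\omega}$-limit of elements of $G_1$, the vector field $X^1$ lies in the closure of $G_1$ in every sense, in particular in its $C^1$-closure. Finally $\phi_{X^1}^t = \bar f_t$ is exactly Equation~(\ref{synchronization}), and $X^1(q) \neq 0$: otherwise $q$ would be fixed by every $\bar f_t$, forcing $H(q)$ to be fixed by every $\phi_{X^2}^t$ and hence $X^2(H(q)) = 0$, a contradiction.

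The main obstacle is the rigidity step: controlling the limit of the conjugated sequence $f_i(t)$ inside $G_1$. This is where the ``linear'' assumption is indispensable; it is precisely what allows one to upgrade the merely uniform convergence of the $f_i(t)$ — all that survives conjugation by the non-smooth map $H$ — to convergence in $\diffM$, and thus to recover an analytic object on the $G_1$-side. A secondary point requiring care is that $\bar f_t$ is a priori only a local object, pinned down by agreement with $\psi_t$ on $H^{-1}(\widetilde V_t)$; one must use the (local) compactness of $\overline{G_1}$ and analytic continuation both to make $\bar f_t$ a well-defined element of $\overline{G_1}$ and to promote $t \mapsto \bar f_t$ to a bona fide one-parameter subgroup before differentiating.
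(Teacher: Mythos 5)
Your route is genuinely different from the paper's on the $G_1$-side: instead of conjugating a sequence of near-identity elements and renormalizing displacements, you conjugate approximations of the time-$t$ maps $\phi_{X^2}^t$ themselves, pass to a limit $\bar f_t$ in a putative Lie group $\overline{G}_1$, and recover $X^1$ as the infinitesimal generator of the resulting one-parameter group. If the limiting step were available, this would even treat the synchronization Equation~(\ref{synchronization}) more transparently than the displacement method, since no matching of normalizing constants between the two groups is needed. The construction of $X^2$ via Theorem~\ref{goodvectorfields} and the transport by $H$ are fine.

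The pivotal step, however, is not justified as written, and this is a genuine gap. You claim that local compactness of $\overline{G}_1$ together with uniform convergence of the $f_i(t)$ on an open set ``prevents escape to infinity and forces $C^{\omega}$-subconvergence''. Local compactness by itself prevents nothing, and escape to infinity in the ambient Lie group is perfectly compatible with uniform convergence to an injective map: for the group $\R$ acting on the torus by an irrational linear flow, times $t_i \rightarrow \infty$ yield elements that converge uniformly on all of $M$ to a rotation while leaving every compact subset of the acting group. So the inference as stated fails. What your argument really needs are two nontrivial assertions: (i) any sequence in $G_1$ converging uniformly on a nonempty open set to a topological embedding sub-converges in the $C^1$ (indeed $C^{\omega}$) sense to an analytic diffeomorphism of $M$ that is still a limit of elements of $G_1$; and (ii) the set $\overline{G}_1$ of such limits, taken inside $\diffM$ and not merely inside the ambient Lie group, is itself a finite-dimensional Lie group acting analytically, so that the continuous one-parameter subgroup $t \mapsto \bar f_t$ can be written as $\exp (tX^1)$. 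Neither follows from the ``linear'' hypothesis without proof (in the torus example both happen to hold, but the limits lie outside the image of the original group, so closure inside the given Lie group is not enough). The paper's proof deliberately avoids this heavier rigidity: it conjugates a sequence $h_i^2 \in G_2$ tending to the identity, uses the finite-dimensionality of the linear action (Lemma~\ref{vectorfield4}) only to upgrade the $C^0$-convergence to the identity of $h_i^1 = H^{-1} \circ h_i^2 \circ H$ to $C^{\infty}$-convergence with bounded norm ratios, then produces $X^1$ by the renormalized-displacement construction of Proposition~\ref{vectorfield1}, reading the conjugacy of $X^1$ and $X^2$ directly off the relations $h_i^1 = H^{-1} \circ h_i^2 \circ H$. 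To salvage your version you must either prove the embedding-limit rigidity statement (i)--(ii) for the linear actions under consideration, or retreat to near-identity sequences as the paper does.
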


Assuming Proposition~\ref{propfortoprigidity} holds, the proof of Theorem~D can be derived as follows.

\vspace{0.2cm}

\begin{proof}[Proof of Theorem~D]
Fix $q \in M$ and vector fields $X^1, X^2$ as in Proposition~\ref{propfortoprigidity}. In view of the discussion
conducted in Section~4.3, the group $G_1$ acts on $X^1$ to yield a collection of vector fields $X^1 = X^1_1,
\ldots , X_n^1$ contained in the closure of $G$ and such that the vectors $X_1^1 (q), \ldots ,X^1_n (q)$ form
a basis for $T_qM$. However, to every element $g_1 \in G_1$ acting on $X^1$ there corresponds an element $g_2
= H \circ g_1 \circ H^{-1} \in G_2$ acting on $X^2$. It then follows the existence of vector fields
$X^2 =X^2_1, \ldots ,X_n^2$ fulfilling the following conditions:
\begin{itemize}

\item $X^2_1, \ldots ,X_n^2$ are analytic vector fields (contained in the $C^1$-closure of $G$).

\item $X^2_1 (H(q)), \ldots ,X_n^2 (H(q))$ form a basis for $T_{H(q)} M$.

\item For every $i \in \{ 1, \ldots ,n\}$, $H$ conjugates the vector fields $X^1_i$ and $X^2_i$.
\end{itemize}

Consider now the local analytic coordinates $(s_1, \ldots ,s_n)$ (resp. $(\overline{s}_1, \ldots,
\overline{s}_n)$) about $q$ (resp. $H(q)$) defined by
$$
(s_1, \ldots ,s_n) \longmapsto \phi_{X^1_n}^{s_n} \circ \cdots \circ \phi_{X^1_1}^{s_1}
$$
(resp. $(\overline{s}_1, \ldots, \overline{s}_n) \mapsto \phi_{X^2_n}^{\overline{s}_n} \circ \cdots \circ
\phi_{X^2_1}^{\overline{s}_1}$). Since $H$ conjugates the vector fields $X^1_i$ and $X^2_i$ for
every $i \in \{ 1, \ldots ,n\}$, it becomes clear that $H$, mapping $q$ to $H(q)$, locally coincides
with the identity in the mentioned coordinates. Thus $H$ is analytic about $q$. Since $q$ is an arbitrary
point of $M$, the statement of Theorem~D follows at once.
\end{proof}

Let us now supply the proof of Proposition~\ref{propfortoprigidity} by exploiting
the ``linear'' character of the action of $G_1$ on $M$.

\begin{proof}[Proof of Proposition~\ref{propfortoprigidity}]
We shall mainly work with the ``non-linear'' group $G_2$. As always, it suffices
to prove the statement on a neighborhood of a contracting fixed point $p$ of some Morse-Smale element of $G_2$.

For this, note that the proof of the proposition in question requires us to construct two conjugate analytic
vector fields: whether or not they belong to the closure of the corresponding groups $G_1, \, G_2$ is no longer
relevant. Thus we shall base our construction in Proposition~\ref{vectorfield3} yielding analytic vector fields
in the $C^r$-closure of $G_2$ (actually we may work with $r=1$). Also it suffices to find $X^1, X^2$ such that
$X^2$ is not identically zero (and hence neither is $X^1$). The condition $X^2 (p) \neq 0$ can immediately
be achieved by replacing the ``original'' $X^2$ by $g^{\ast} X^2$ for a suitable $g \in G_2$ (recall that
$G_2$ has dense orbits in $M$).

The construction of $X^1, X^2$ goes as follows. Let $\{ h_i^2\}$ be a sequence of elements in $G_2$
such that the vector fields $X_i^2$ defined by
\begin{equation}
X_i^2 (x) = \frac{1}{\sup_{x \in U} \Vert h_i^2 (x) -x \Vert} . (h_i^2 (x) -x) \label{synchronizing1}
\end{equation}
converge in the $C^r$-topology to a non-identically zero analytic vector field $X^2$ in the
$C^r$-closure of $G_2$.

Consider now the sequence $\{ h_i^1\}$ of elements in $G_1$ given by $h_i^1 = H^{-1} \circ h_i^2
\circ H$. Clearly the sequence $\{ h_i^1\}$ converges {\it uniformly}\, to the identity on $M$. However,
since $G_1$ is ``linear'', this sequence also converges to the identity in, say, the $C^{\infty}$-topology
(it is an easy consequence of Lemma~\ref{vectorfield4}). Besides, this sequence also satisfies the assumptions
of Proposition~\ref{vectorfield1}, cf. again Lemma~\ref{vectorfield4}). Thus, letting
\begin{equation}
X_i^1 (x) = \frac{1}{\sup_{x \in U} \Vert h_i^1 (x) -x \Vert} . (h_i^1 (x) -x) \, .\label{synchronizing2}
\end{equation}
Modulo passing to a subsequence, the vector fields $X_i^1$ will converge in the $C^r$-topology towards an
analytic vector field $X^1$ in the $C^r$-closure of $G_1$. Since, for every $i \in \N$,
$h_i^1 = H^{-1} \circ h_i^2 \circ H$, it follows that $H$ conjugates $X^1$ to $X^2$. The proposition
is proved.
\end{proof}

\subsection{Additional issues and questions on OE}

As promised, this paper will end with a few questions concerning the ``rigidity
phenomenon'' and orbit equivalences.
Let us start with a general question concerning the role of the ``linear'' assumption on $G_1$ imposed by
the statement of Theorem~D. Note that this assumption was only used in the proof of Proposition~\ref{propfortoprigidity}.
In most situations, the sequence $\{ g_j^2\}_{j \in \N}$ of elements in $G^2$ converging to the
identity and satisfying the conditions of Lemma~\ref{vectorfield1} can be obtained by re-normalizing (by means of
$F$) a sequence of commutators. In these cases, an affirmative answer to the question below would allow us
to remove the ``linear'' assumption from the corresponding statement (for details see the standard construction
for example in \cite{rebelo-ENS} or \cite{rebelo-BBMS}).

\vspace{0.1cm}

\noindent {\it Question: does $C^0$-convergence imply $C^{\infty}$-convergence}. To accurately state this question, let $f, g$ be two diffeomorphisms of $M$. By taking $\{ f,g, f^{-1}, g^{-1} \}$
as the generating set $S$ of a group $G$, consider the sequence of subsets $S(j)$ defined by Ghys, cf. Section~2.
We may assume that $f,g$ are close to the identity so that every non-trivial sequence of diffeomorphisms
$\{ g_k\}_{k \in \N}$ such that, say, $G_k \in S(k)$, converges to the identity (say in the $C^{\infty}$-topology).

Now let $H$ denote a homeomorphism of $M$ such that $\overline{f} = H^{-1} \circ f \circ H$ and
$\overline{g} = H^{-1} \circ g \circ H$ are smooth/analytic diffeomorphism of $M$. Let $\overline{S} (j)$ be the
analogous sequence of commutators that is obtained by starting with $\overline{S} = \{ {f},
\overline{g} , \overline{f}^{-1} , \overline{g}^{-1} \}$. In particular, for every $k \in \N$ and $g_k \in S(k)$,
there corresponds a diffeomorphism $\overline{g}_k \in \overline{S} (k)$. Clearly the sequence of diffeomorphisms
$\{ \overline{g}_k \}$ converges in the $C^0$-topology to the identity. In view of what precedes, it is natural
to ask whether or not $\{ \overline{g}_k \}$ converges to the identity in the $C^{\infty}$-topology as well? This
question appears to be interesting already when the ambient manifold $M$ is the circle. Note, however, that the
problem is immaterial in the holomorphic setting (even for pseudogroups) due to the Cauchy formula.

\vspace{0.1cm}

\noindent {\it Rigidity for measurable conjugacy}. Let $G_1, G_2 \subset \diffM$ be as in the statement of
Theorem~D. Assume that $\theta : M \rightarrow M$ is a measurable bijection conjugating (Lebesgue almost
everywhere) the actions of $G_1, G_2$. Our purpose here is to sketch a proof that, possibly modulo some
minor additional generic assumptions, $\theta$ must coincide almost everywhere with an element of $\diffM$.
The argument is borrowed from \cite{reb2}. First consider the diagonal action $G_1 \times G_2$ on
$M \times M$ and note that vector fields in the $C^{\infty}$-closure of this action can be constructed. In fact,
it suffices to consider $X^2$ as in the proof of Proposition~\ref{propfortoprigidity} and obtain a vector field
$X^1$ by means of the sequence $h_i^1 = \theta^{-1} \circ h_i^2 \circ \theta$. Here the classical Lusin theorem
applied to $\theta$ suffices to conclude that $\{ h_i^1\}$ converges $C^0$, and hence $C^{\infty}$, to the
identity. The resulting vector field $X = (X^1, X^2)$ is then in the $C^{\infty}$-closure of $G_1 \times G_2$.
Now note that the graph ${\rm Graph}\, (\theta)$ of $\theta$ in $M \times M$ is invariant under this diagonal
action. The same argument given in Section~4 of \cite{reb2} shows the following: a generic (Lebesgue density)
point $p_1$ in the domain of $\theta$ is such that no vector field $X =(X^1,X^2)$ in the $C^{\infty}$-closure
of the diagonal action and defined about $(p_1, \theta (p_1))$ is neither {\it vertical}\, nor {\it horizontal}\,
at $(p_1, \theta (p_1))$. By definition this means that every vector field $X = (X^1, X^2)$ in the
$C^{\infty}$-closure of $G_1 \times G_2$ and defined about $(p_1, \theta (p_1))$ is such that either
$X^1 (p_1), X^2 (\theta (p_1))$ vanish simultaneously or they are both different from zero. The third step
of the proof, for which generic assumptions may be required, consists of showing that the set of points
$(p_1,p_2) \in M \times M$ such that no vector field defined about $(p_1,p_2)$ and contained in the
$C^{\infty}$-closure of $G_1 \times G_2$ is either vertical or horizontal at $(p_1,p_2)$ constitutes an
analytic subset of dimension~$n$ of $M \times M$. Once this last statement is proved,
it follows that ${\rm Graph}\, (\theta)$ must be
contained in the analytic set in question so that the regular character of $\theta$ can easily be derived.

\vspace{0.1cm}

\noindent {\it OE and further questions}. Our main interest in the above mentioned rigidity result lies
in the fact that {\it it is easy to construct actions that are not $C^{\infty}$-conjugate}\, and hence
{\it it becomes easy}\, to construct actions that are not measurably conjugate. A general question is then
as follows: can we use this result to construct new examples of non-OE actions for a same abstract group?

There are many difficult questions concerning OE between actions (see for example \cite{furman}) despite
some rather spectacular progress made in the past decade or so. Yet, constructing examples of non-OE actions
for a same group with as much regularity as possible is still an interesting question. If the ``very regular''
actions for which the ``rigidity of measurable conjugacies'' holds are considered, we may wonder when it is
possible to ensure that a given OE must necessarily be equivariant. Already in the case of subgroups
of ${\rm PSL}\, (2, \C)$ projectively acting on the sphere $S^2$, some specific question can explicitly
be stated, namely:
\begin{itemize}
  \item According to Sullivan \cite{sullivan}, every Kleinian group is OE to a $\Z$-action and hence the action
  of every two Kleinian groups are OE (in the context of quasi-invariant measures). The proof depends heavily
  on the fact that these groups are discrete. So we may wonder how many non-OE actions can be produced by means
  of, say the free group on two generators, realized as dense subgroups of ${\rm PSL}\, (2, \C)$.

  \item For which classes of groups, if any, admitting non-discrete actions every OE must be equivariant?

  \item Related to the last question above, is there a generalization of the main result in \cite{david}
  to the measure-theoretic context? Whereas this extension seems unlikely to exist, almost every non-trivial
  generalization of the criterion in \cite{david} would be very interesting.

\end{itemize}

\begin{flushleft}

{\sc Julio C. Rebelo}\\
Institut de Math\'ematiques de Toulouse\\
Universit\'e de Toulouse\\
118 Route de Narbonne F-31062, Toulouse\\
FRANCE\\
rebelo@math.univ-toulouse.fr

\end{flushleft}

\end{document}